\UseRawInputEncoding
\documentclass[12pt]{article}
\usepackage[centertags]{amsmath}
\usepackage{amsfonts}
\usepackage{amssymb}
\usepackage{latexsym}
\usepackage{amsthm}
\usepackage{newlfont}
\usepackage{graphicx}
\usepackage{listings}
\usepackage{booktabs}
\usepackage{abstract}
\usepackage{xcolor}
\lstset{numbers=none,language=MATLAB}
\setcounter{page}{1}
\date{}

\bibliographystyle{amsplain}

\newlength{\defbaselineskip}
\setlength{\defbaselineskip}{\baselineskip}
\newcommand{\setlinespacing}[1]%
           {\setlength{\baselineskip}{#1 \defbaselineskip}}

\newcommand{\N}{{\mathbb{N}}}

\newcommand{\actaqed}{\hfill $\actabox$}
{\medskip\noindent \textit{Proof of #1. }}%
{\actaqed \medskip}

\def\D{{\mathcal D}}

\def\cB{{\mathcal B}}
\def\cC{{\mathcal C}}

\def \Tr{\mathcal T}
\def \cK{\mathcal K}

\def \V{\mathcal V}
\def \cW{\mathcal W}
\def \cE{\mathcal E}

\def \cN{\mathcal N}
\def \cL{\mathcal L}

\def\R{{\mathbb R}}
\def\Z{\mathbb Z}

\def \Td{{\mathbb T}^d}
\def \T{\mathbb T}
\def\bP{\mathbb P}
\def\bE{\mathbb E}
\def\bbC{\mathbb C}
\def\Om{\Omega}
\def \<{\langle}
\def\>{\rangle}

\def \e{\varepsilon}
\def \va{\varepsilon}
\def \de{\delta}

\def \ff{\varphi}
\def\la{\lambda}
\def \ro{\varrho}

\def \sp{\operatorname{span}}

\def\bx{\mathbf x}
\def\by{\mathbf y}

\def\bk{\mathbf k}
\def\bv{\mathbf v}
\def\bu{\mathbf u}
\def\bw{\mathbf w}

\def\bs{\mathbf s}
\def\bN{\mathbf N}

\def\bF{\mathbf F}

\def\al{\alpha}
\def\bt{\beta}

\def\La{\Lambda}

\newtheorem{Theorem}{Theorem}[section]
\newtheorem{Lemma}{Lemma}[section]

\newtheorem{Definition}{Definition}[section]
\newtheorem{Proposition}{Proposition}[section]
\newtheorem{Remark}{Remark}[section]

\newtheorem{Corollary}{Corollary}[section]

\numberwithin{equation}{section}

\newcommand{\be}{\begin{equation}}
\newcommand{\ee}{\end{equation}}

\begin{document}

\title{Sampling discretization of the uniform norm}

\author{B. Kashin\thanks{Steklov Institute of Mathematics, Lomonosov Moscow State University, and Moscow Center for Fundamental and Applied Mathematics,},\, S. Konyagin\thanks{Steklov Institute of Mathematics and Lomonosov Moscow State University,},\, and V. Temlyakov\thanks{University of South Carolina, Steklov Institute of Mathematics, Lomonosov Moscow State University, and Moscow Center for Fundamental and Applied Mathematics.} }
\maketitle
\begin{abstract}
{Discretization of the uniform norm of functions from a given finite dimensional subspace of continuous functions is studied.  We pay special attention to the case of trigonometric polynomials with frequencies from an arbitrary finite set with fixed cardinality. We give two different proofs of the fact that for any $N$-dimensional subspace of the space of continuous functions it is sufficient to use $e^{CN}$ sample points for an accurate upper bound for the uniform norm. Previous known results show that one cannot improve on the exponential growth of the number of sampling points for a good
discretization theorem in the uniform norm. 
  Also, we prove a general result, which connects the upper bound on the number of sampling points in the discretization theorem for the uniform norm with the best $m$-term bilinear approximation of the Dirichlet kernel associated with the given subspace. 
We illustrate the application of our technique on the example of trigonometric polynomials.}
\end{abstract}

\section{Introduction}
\label{Int}

 Recently, a systematic study of the problem of discretization of the $L_q$, $1\le q\le\infty$, norms of elements of finite dimensional subspaces has begun (see the survey paper \cite{KKLT}). 
 Discretization of the $L_q$, $1\le q\le \infty$, norms is a vast and rapidly developing area of mathematical analysis. Many results are known in the case, when functions come from specific finite dimensional subspaces -- trigonometric polynomials, algebraic polynomials, defined on different domains, including a sphere, and others. The reader can find a discussion of these results in the very recent survey paper \cite{KKLT}. Here we only cite results directly related to the new results of the paper. In this paper we concentrate on discretization results for arbitrary finite dimensional subspaces. Also, we try to find a characteristic of finite dimensional subspaces which governs the discretization of the uniform norm properties of these subspaces (see Section \ref{A}). We now give very brief historical remarks. 

 The first results in sampling discretization were obtained by Bernstein \cite{Bern1} and \cite{Bern2} (see also \cite{Z}, Ch.10, Theorem (7.28)) for discretization of the uniform norm ($L_\infty$ norm) of the univariate trigonometric polynomials in 1931-1932. 
The first results in sampling discretization of the integral norms $L_q$, $1\le q<\infty$, were obtained by Marcinkiewicz ($1<q<\infty$) and
by Marcinkiewicz-Zygmund ($1\le q\le \infty$) (see \cite{Z}, Ch.10, Theorems (7.5) and (7.28)) for discretization of the $L_q$ norms of the univariate trigonometric polynomials in 1937. Therefore, we also call sampling discretization results  {\it Bernstein-type theorems} in the case of discretization of the uniform norm of functions from a finite dimensional subspace and {\it Marcinkiewicz-type theorems} in the case of integral norms of functions from a finite dimensional subspace (see \cite{VT158}, \cite{VT159}, \cite{DPTT}). In the literature this kind of results is also known under the name {\it Marcinkiewicz-Zygmund inequalities} (see, for instance, \cite{DW}, \cite{Gro} and references therein).
We discuss here the way which uses function values at a fixed finite set of points. This way of discretization is called {\it sampling discretization}. It is known from previous results that the discretization problem in the case $q=\infty$ (uniform norm) 
brings new effects compared to the case $q<\infty$. In this paper we concentrate on the case 
$q=\infty$. We refer the reader to the survey paper \cite{DPTT} and the papers \cite{DPSTT1}, \cite{DPSTT2}, \cite{Kos} for results on sampling discretization of the $L_q$ norm for $q\in [1,\infty)$.
For optimal in the sense of order results on sampling discretization of the $L_2$ norm see \cite{LT}.

We discussed in \cite{VT168} (see also \cite{DPTT}) the following setting of the discretization problem of the uniform norm. 
Let $Q \subset \Z^d$ be a finite set. Denote $N:=|Q|$ its cardinality. Consider a subspace 
$\Tr(Q)$ of trigonometric polynomials of the form
$$
f(\bx) = \sum_{\bk\in Q} c_\bk e^{i(\bk,\bx)}.
$$
Let $ \xi:=\xi(m):=\{\xi^j\}_{j=1}^m \subset \Td$ be a finite set of points. Clearly, 
$$
\|f\|_{\xi(m)} := \max_{1\le j\le m} |f(\xi^j)| \le \|f\|:=\|f\|_\infty
$$
where $\|\cdot\|_\infty$ is the uniform norm.
We are interested in estimating the following quantities
$$
D(Q,m):=D(Q,m,d):= \inf_{\xi(m)}\sup_{f\in\Tr(Q)}\frac{\|f\|}{\|f\|_{\xi(m)}},
$$
$$
D(N,m):=D(N,m,d):= \sup_{Q,|Q|=N} D(Q,m,d).
$$
Certainly, one should assume that $m\ge N$. Then the characteristic $D(Q,m)$ guarantees that there exists a set of $m$ points $\xi(m)$ such that for any $f\in\Tr(Q)$ we have
$$
\|f\|\le D(Q,m)\|f\|_{\xi(m)}.
$$
In the case $d=1$ and $Q=[-n,n]$ the classical Marcinkiewicz theorem (see \cite{VTbookMA}, p. 24)
gives that $D([-n,n],4n)\le C$. Similar relation holds for $D([-n_1,n_1]\times\cdots\times[-n_d,n_d], (4n_1)\times\cdots\times(4n_d))$ (see \cite{VTbookMA}, p. 102). 

It was proved in \cite{VT168} (see also \cite{DPTT}) that for a pair $N$, $m$, such that $m$ is of order $N$ we have that $D(N,m)$ is of order $N^{1/2}$. We formulate this result as a theorem.

\begin{Theorem}[\cite{VT168},\cite{DPTT}]\label{IT1} For any constant $c\ge 1$ there exists a positive constant $C$ such that for any pair of parameters $N$, $m$, with $m\le cN$ we have
$$
D(N,m)\ge CN^{1/2}.
$$
Also, there are two positive absolute constants $c_1$ and $C_1$ with the following property: For any $d\in \N$ we have for $m\ge c_1N$
$$
D(N,m,d)\le C_1N^{1/2}.
$$
\end{Theorem}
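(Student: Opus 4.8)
The plan is to prove the two inequalities by entirely different means: the upper bound is a soft consequence of two standard facts, while the lower bound requires an explicit extremal subspace. For the \emph{upper bound} $D(N,m,d)\le C_1N^{1/2}$ I would combine a Nikolskii-type inequality with an $L_2$ sampling discretization theorem. First, for any $f\in\Tr(Q)$ with $|Q|=N$, Cauchy--Schwarz and Parseval give the dimension-free Nikolskii inequality
\be
\|f\|_\infty=\Big\|\sum_{\bk\in Q}c_\bk e^{i(\bk,\cdot)}\Big\|_\infty\le\sum_{\bk\in Q}|c_\bk|\le N^{1/2}\Big(\sum_{\bk\in Q}|c_\bk|^2\Big)^{1/2}=N^{1/2}\|f\|_2 ,
\ee
which holds for every $d$ and every $Q$ — exactly the required uniformity in $d$. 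Second, I would invoke an $L_2$ discretization result of the type in \cite{LT}: there are $m\le c_1N$ points $\xi(m)$ and weights $w_j\ge0$ with $\sum_jw_j\le C$ such that $\|f\|_2^2\le C\sum_jw_j|f(\xi^j)|^2$ for all $f\in\Tr(Q)$. Then
\be
\|f\|_2^2\le C\sum_jw_j|f(\xi^j)|^2\le C\Big(\sum_jw_j\Big)\max_j|f(\xi^j)|^2\le C'\|f\|_{\xi(m)}^2 ,
\ee
and substituting into the Nikolskii inequality gives $\|f\|_\infty\le C_1N^{1/2}\|f\|_{\xi(m)}$, i.e. $D(N,m,d)\le C_1N^{1/2}$.

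For the \emph{lower bound} I would start from the observation that the discretization inequality $\|f\|_\infty\le K\|f\|_{\xi(m)}$, applied to a translate $f=\mathcal D_Q(\cdot-\by)$ of the Dirichlet kernel $\mathcal D_Q(\bx)=\sum_{\bk\in Q}e^{i(\bk,\bx)}$, forces $N=|\mathcal D_Q(0)|\le\|f\|_\infty\le K\max_j|\mathcal D_Q(\xi^j-\by)|$, so that $\max_j|\mathcal D_Q(\xi^j-\by)|\ge N/K$ for every $\by$. The strategy is to choose $Q$ so that this is impossible unless $K\gtrsim N^{1/2}$. The natural candidate is a frequency set $Q\subset\Z$ with $|Q|=N$ whose kernel is \emph{flat}: $\|\mathcal D_Q\|_\infty=\mathcal D_Q(0)=N$ while $|\mathcal D_Q(t)|\le CN^{1/2}$ off a small set. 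A Sidon set (e.g. a lacunary set $\{a^i\}$) or a perfect difference set makes the autocorrelation $\sum_{\bk,\bk'}e^{i(\bk-\bk')t}$ essentially constant, so that $|\mathcal D_Q(t)|^2\approx N$ away from the origin. Given such a $Q$ and any $m\le cN$ points, a measure/union argument should show
$$
\mathrm{meas}\,\bigcup_j\{\by:|\mathcal D_Q(\xi^j-\by)|>CN^{1/2}\}\le m\cdot\mathrm{meas}\{|\mathcal D_Q|>CN^{1/2}\}<\mathrm{meas}(\T),
$$
so some translate $\by_0$ is missed by all sample points; then $f=\mathcal D_Q(\cdot-\by_0)$ gives $\|f\|_\infty/\|f\|_{\xi(m)}\ge N/(CN^{1/2})=c'N^{1/2}$, and since $\xi(m)$ was arbitrary, $D(N,m)\ge D(Q,m)\ge c'N^{1/2}$.

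The main obstacle is extracting the \emph{sharp} exponent $1/2$ rather than $N^{1/2}/(\log N)^{1/2}$. A flat kernel is only sub-Gaussian, so the naive union bound pushes the cutoff up to $N^{1/2}(\log N)^{1/2}$: the worst $m\sim N$ points will in general catch a deviation of the kernel of that order, and a logarithmic factor is lost. To recover the clean $N^{1/2}$ one needs the super-level set $\{t:|\mathcal D_Q(t)|>CN^{1/2}\}$ to have measure $O(1/N)$ \emph{uniformly}, not merely in an averaged $L_{2p}$ sense; this is where the precise arithmetic of $Q$ enters — controlling the kernel between the grid points of the associated cyclic group and ruling out secondary peaks — and it is the technical heart of the argument. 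An alternative route that avoids the logarithm is the volumetric comparison
$$
D(Q,m)\ge\big(\mathrm{vol}\,B_{\xi(m)}/\mathrm{vol}\,B_\infty\big)^{1/(2N)},\qquad B_\infty=\{f:\|f\|_\infty\le1\},\quad B_{\xi(m)}=\{f:\|f\|_{\xi(m)}\le1\},
$$
where one bounds $\mathrm{vol}\,B_\infty$ from above by the volume of the $L_2$-ball and bounds $\mathrm{vol}\,B_{\xi(m)}$ from below uniformly over the configuration; I expect that uniform lower volume estimate to be the crux there, since an adversarial clustering of the sample points is what one must defeat.
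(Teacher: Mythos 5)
Your upper-bound argument is correct and it coincides with the paper's own route. The paper does not prove that half under the heading of Theorem \ref{IT1} (which is cited from \cite{VT168}, \cite{DPTT}); instead it proves the more general Theorem \ref{DT4} in Section 6.3, and its proof is exactly your chain: the Nikol'skii inequality $\|f\|_\infty\le N^{1/2}\|f\|_2$ for $\Tr(Q)$, the $L_2$ sampling discretization Theorem \ref{DT5} with $m\le C_1'N$ points, and weights with bounded sum. One attribution point: the bound $\sum_j \la_j\le C_0'$ is not part of \cite{LT} itself; it is obtained by adjoining the constant function to the subspace and testing the two-sided inequality on $f=1$ (Remark \ref{DR1} in the paper). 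Since all constants there are absolute, the uniformity in $d$ that you need is indeed automatic.

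The lower bound is where the content of the theorem lies, and there your proposal has genuine gaps. Your first route (translates of a flat Dirichlet kernel plus a union bound) cannot produce the exponent $1/2$ with the sets you propose, and this is not a removable technicality of the union bound: for a lacunary $Q$ the Salem--Zygmund central limit theorem shows that $\operatorname{meas}\{t:\,|\D_Q(t)|>\la N^{1/2}\}$ converges to a positive constant for each fixed $\la$, so it is never $O(1/N)$. The cutoff is therefore forced up to $(N\log N)^{1/2}$ and the argument yields only $D(N,m)\ge c(N/\log N)^{1/2}$, which is precisely the loss in the paper's own Section 6.2 argument and in Proposition \ref{IP1}, both of which the paper explicitly describes as weaker than Theorem \ref{IT1}.

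Your second, volumetric route is indeed how \cite{VT168}, \cite{KaTe03}, \cite{DPTT} prove the sharp bound, but your sketch of it contains a step that invalidates it as written: you propose to bound $\mathrm{vol}\,B_\infty$ from above by the volume of the $L_2$ ball. That bound loses exactly the factor you are trying to gain. The volume radius of the $L_2$ ball is of order $N^{-1/2}$, while the best uniform lower bound for the adversary's body $B_{\xi(m)}$ is also of order $N^{-1/2}$ (this is sharp: take $Q$ an interval and $\xi(m)$ the Marcinkiewicz points, so that $B_{\xi(m)}\subset C B_2$); hence the ratio $\bigl(\mathrm{vol}\,B_{\xi(m)}/\mathrm{vol}\,B_\infty\bigr)^{1/(2N)}$ is only bounded below by a constant, and you obtain $D\ge c$ rather than $D\ge cN^{1/2}$. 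The actual proof must choose $Q$ lacunary and exploit Sidonicity, $\|f\|_\infty\ge c(b)\sum_{\bk\in Q}|c_\bk|$, which makes $B_\infty$ comparable to an $\ell_1$ ball and forces $(\mathrm{vol}\,B_\infty)^{1/(2N)}\le C/N$, a full factor $N^{1/2}$ below the $L_2$ ball; this is then combined with a Gluskin-type estimate showing that the intersection of $m\le cN$ slabs $\{|f(\xi^j)|\le 1\}$ has volume radius at least $cN^{-1/2}$ uniformly in the configuration. Both ingredients (the Sidon property of the chosen spectrum, and the uniform slab-intersection volume estimate) are absent from your text, so the first half of Theorem \ref{IT1} remains unproven in your proposal.
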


The following interesting phenomenon was observed in  \cite{VT168} (see also \cite{DPTT}). It turns out that the inequality $D(N,m) \le C$ is only possible if $m$ grows exponentially with $N$. Namely, the following result was obtained in \cite{VT168} (see also \cite{DPTT}). 

\begin{Theorem}[\cite{VT168},\cite{DPTT}]\label{IT2} Let $\Lambda_N = \{k_j\}_{j=1}^N$ be a lacunary sequence: $k_1=1$, $k_{j+1} \ge bk_j$, $b>1$, $j=1,\dots,N-1$. Assume that a finite set $\xi=\{\xi^\nu\}_{\nu=1}^m\subset \mathbb T$ has
the following property
\begin{equation}\label{I1}
\forall f\in \Tr(\Lambda_N) \qquad \|f\|_\infty \le L\max_{\nu}|f(\xi^\nu)|.
\end{equation}
Then
$$
m \ge (N/e)e^{CN/L^2}
$$
with a constant $C>0$ which may only depend on $b$.
\end{Theorem}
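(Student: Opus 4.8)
The plan is to test the discretization inequality \eqref{I1} on random $\pm1$ polynomials and to combine Sidon's inequality for lacunary series with a sub-Gaussian tail estimate and a union bound. The conceptual point is the following gap: at a fixed point a random lacunary sum has typical size $\sqrt N$, but Sidon's inequality forces its uniform norm to be of order $N$; the maximum over the sample points must therefore catch a value that is far out in an exponentially thin tail, and this can only happen if there are exponentially many points.

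Concretely, first I would introduce the random polynomials $f_\va(x)=\sum_{j=1}^N \va_j e^{ik_j x}$ with $\va=(\va_j)\in\{-1,1\}^N$ chosen by independent fair coin flips. Sidon's inequality for a lacunary spectrum with ratio $k_{j+1}\ge b k_j$ provides a constant $A_b$, depending only on $b$, such that $\sum_j|c_j|\le A_b\|f\|_\infty$ for every $f\in\Tr(\Lambda_N)$; applied to $f_\va$ this yields the deterministic lower bound $\|f_\va\|_\infty\ge A_b^{-1}N=:cN$ for \emph{every} sign choice. Hence by \eqref{I1}, for every $\va$ there is an index $\nu$ with $|f_\va(\xi^\nu)|\ge \|f_\va\|_\infty/L\ge cN/L$, so the event $\bigcup_{\nu=1}^m\{|f_\va(\xi^\nu)|\ge cN/L\}$ has probability one.

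Second, I would bound, for each fixed $\nu$, the probability $\bP_\va(|f_\va(\xi^\nu)|\ge cN/L)$. Writing $a_j:=e^{ik_j\xi^\nu}$, the quantity $f_\va(\xi^\nu)=\sum_j\va_j a_j$ is a complex Rademacher sum with $\bE|f_\va(\xi^\nu)|^2=N$. Splitting into real and imaginary parts, each of which is a real Rademacher sum with variance at most $N$, and applying Hoeffding's inequality to each part, one obtains a sub-Gaussian estimate of the shape $\bP_\va(|f_\va(\xi^\nu)|\ge t)\le 4\exp\!\big(-t^2/(4N)\big)$. Taking $t=cN/L$ turns this into $4\exp\!\big(-c^2N/(4L^2)\big)$.

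Finally, the union bound applied to the probability-one event gives $1\le\sum_{\nu=1}^m\bP_\va(|f_\va(\xi^\nu)|\ge cN/L)\le 4m\exp(-c^2N/(4L^2))$, whence $m\ge \tfrac14\exp(c^2N/(4L^2))$, which already has the required form $e^{CN/L^2}$ with $C$ depending only on $b$ through the Sidon constant $A_b$. The main obstacle is to upgrade the constant $\tfrac14$ to the polynomial prefactor $N/e$ claimed in the statement: this requires replacing the crude sub-Gaussian tail by a sharp Stirling-type estimate of how many sign patterns $\va$ make $|f_\va(\xi^\nu)|$ exceed the threshold, together with careful tracking of the constants in Sidon's inequality. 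Everything else is routine once the gap between the typical size $\sqrt N$ and the Sidon-forced size $N$ is exploited.
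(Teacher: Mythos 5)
Your computation is correct as far as it goes, but you should be aware that it is precisely the argument the paper itself sketches in Subsection 6.2 of Section \ref{D} (random sign polynomials, the Sidon-type lower bound (\ref{D5}), the Hoeffding estimate (\ref{D6}), and a union bound, yielding (\ref{D7})--(\ref{D8})), and the paper presents that argument explicitly as a way to prove \emph{weaker} results; Theorem \ref{IT2} itself is imported from \cite{VT168}, \cite{DPTT}, where it is proved by a different, geometric method. Concretely, your union bound gives $m\ge \frac14 e^{CN/L^2}$, and the missing prefactor $N/e$ is not a cosmetic constant: it carries the entire content of the theorem once $L\gtrsim (N/\log N)^{1/2}$. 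For instance, when $L\asymp N^{1/2}$ the theorem still forces $m\gtrsim N$, while your bound degenerates to $m\ge c$ with an absolute constant $c$.

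The repair you propose --- sharpening the sub-Gaussian tail by a Stirling-type count of sign patterns --- cannot close this gap, because the obstruction is structural rather than a loss in the tail estimate. For the complex Rademacher sum $S_\nu=\sum_j \va_j e^{ik_j\xi^\nu}$ one has $\bE|S_\nu|^2=N$ and $\bE|S_\nu|^4\le 3N^2$, so by the Paley--Zygmund inequality $\bP\bigl(|S_\nu|\ge \sqrt N/2\bigr)\ge 3/16$; hence as soon as the threshold $cN/L$ is of order $\sqrt N$, every event in your union has probability bounded below by an absolute constant, and no union bound --- however exact the individual tail estimates (the true Gaussian tail only contributes an extra factor $L/\sqrt N$) --- can produce a lower bound on $m$ that grows with $N$. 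Even supplementing your bound with the trivial observation that (\ref{I1}) forces $m\ge N$ (a nonzero polynomial of $\Tr(\Lambda_N)$ vanishes at any $m<N$ prescribed points) does not recover the claimed bound in intermediate regimes such as $L^2\asymp N/\log\log N$, where the theorem demands the superlinear bound $m\gtrsim N(\log N)^{c}$. The proofs in \cite{VT168}, \cite{DPTT} replace the union bound by a volume comparison: the set $\{f\in\Tr(\Lambda_N):\ \max_\nu|f(\xi^\nu)|\le 1\}$ is an intersection of $m$ slabs in the coefficient space, whose volume is bounded below by a Gluskin-type estimate involving $\bigl(\log(em/N)\bigr)^{-1/2}$, while by Sidon's inequality and (\ref{I1}) this set sits inside an $\ell_1$-ball of radius $A_bL$, whose normalized volume is of order $L/N$; comparing the two gives $\log(em/N)\ge CN/L^2$, which is exactly $m\ge (N/e)e^{CN/L^2}$. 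The appearance of $\log(em/N)$ in place of your $\log m$ is what the geometric argument buys and what the probabilistic one, in principle, cannot.
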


In Section \ref{C} (see Corollary \ref{CC1}) we prove a similar result for randomly chosen sets of frequences. Theorem \ref{IT2} demonstrates that the subspace $ \Tr(\Lambda_N)$ is a difficult one for sampling discretization in the uniform norm. It easily implies the following Proposition \ref{IP1}, which is an extension of the lower bound in Theorem \ref{IT1}.

 \begin{Proposition}\label{IP1} For any number $k\in\N$ and any constant $c\ge 1$ there exists a positive constant $C=C(k,c)$ such that for any pair of parameters $N$, $m$, with $m\le cN^k$ we have
$$
D(N,m)\ge C\left(\frac{N}{\log N}\right)^{1/2}.
$$
\end{Proposition}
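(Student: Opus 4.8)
The plan is to derive the lower bound from Theorem~\ref{IT2} by specializing the frequency set $Q$ to a lacunary sequence. Since $D(N,m)$ is the supremum of $D(Q,m,d)$ over all frequency sets $Q$ of cardinality $N$, it suffices to bound $D(\Lambda_N,m)$ from below for a single lacunary sequence, say $\Lambda_N=\{2^{j-1}\}_{j=1}^N$ (so that $b=2$ and the constant in Theorem~\ref{IT2} becomes absolute); then $D(N,m)\ge D(\Lambda_N,m)$.

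First I would set $L:=D(\Lambda_N,m)$ and unwind the infimum in its definition. For every $L'>L$ there is a configuration $\xi(m)$ with $\sup_{f\in\Tr(\Lambda_N)}\|f\|/\|f\|_{\xi(m)}\le L'$, that is, \eqref{I1} holds with the constant $L'$. Theorem~\ref{IT2} then yields $m\ge (N/e)\,e^{CN/(L')^2}$ for every such $L'$; since the right-hand side is continuous and increases as $L'\downarrow L$, letting $L'\to L$ gives
$$
m\ge \frac{N}{e}\,e^{CN/L^2}.
$$
Next I would insert the hypothesis $m\le cN^k$. This forces $(N/e)\,e^{CN/L^2}\le cN^k$, hence $e^{CN/L^2}\le ce\,N^{\,k-1}$, and after taking logarithms
$$
\frac{CN}{L^2}\le \log(ce)+(k-1)\log N.
$$
For $N\ge 2$ the right-hand side is at most $C_1(k,c)\log N$, so $L^2\ge (C/C_1)\,N/\log N$ and therefore $D(N,m)\ge D(\Lambda_N,m)=L\ge C_2(k,c)\,(N/\log N)^{1/2}$, which is the claimed estimate for all large $N$.

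The remaining points are routine bookkeeping rather than genuine obstacles. For the finitely many small values of $N$ (in particular $N=1$, where $\log N=0$) one invokes the trivial inequality $D(N,m)\ge 1$, valid because $\|f\|_{\xi(m)}\le\|f\|$, and simply shrinks $C=C(k,c)$ so that $C\,(N/\log N)^{1/2}\le 1$ in that range. The essential content is supplied entirely by the exponential lower bound on $m$ in Theorem~\ref{IT2}; the proof is merely its logarithmic inversion. The one step requiring a little care is the passage from the infimum defining $D(\Lambda_N,m)$ to an honest point set satisfying \eqref{I1}, which the limiting argument above handles.
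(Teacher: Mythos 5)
Your argument is correct: the monotonicity $D(N,m)\ge D(\Lambda_N,m)$, the limiting step $L'\downarrow L$ that turns the infimum defining $D(\Lambda_N,m)$ into an honest point set satisfying (\ref{I1}), and the logarithmic inversion of $m\ge (N/e)e^{CN/L^2}$ under $m\le cN^k$ are all sound. (The only blemish is the degenerate case $N=1$, where $(N/\log N)^{1/2}$ is not finite, so the ``shrink $C$'' fix cannot literally work; the statement must be read with the convention $N\ge 2$ or $\log$ replaced by $\max(\log,1)$ --- a defect of the statement itself, shared with the paper.) However, the paper proves Proposition~\ref{IP1} by a genuinely different route. Although the introduction remarks that Theorem~\ref{IT2} ``easily implies'' the proposition --- which is exactly your argument --- the proof actually supplied, in Section~\ref{C}, is probabilistic: it invokes Theorem~\ref{CT1} for a random spectrum $Q(\bw)\subset[1,K]\cap\N$, choosing $K$ of order $N^{k+1/2}$ so that the hypothesis $m\le K(\log K)^{1/2}N^{-1/2}$ of Theorem~\ref{CT1} follows from $m\le cN^k$. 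The two approaches buy different things. Yours is shorter and uses only the cited Theorem~\ref{IT2}, but the extremal subspace is lacunary, so its largest frequency is exponential in $N$. The paper's argument establishes more than the proposition asks: the lower bound is attained with high probability by a typical spectrum whose largest frequency grows only polynomially in $N$ --- precisely the contrast the authors emphasize right after stating Proposition~\ref{IP1} in the introduction.
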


In Section \ref{C} we show that a randomly chosen subspace $\Tr(Q)$ 
is also a difficult one for sampling discretization in the uniform norm with high probability. In particular, we give in Section \ref{C} a probabilistic proof of Proposition \ref{IP1}. 
Note that in the case of $\Lambda_N$ the largest frequency $k_N$ is at least of the exponential order in $N$.
In the random example from Section \ref{C} the largest frequency is at most of the polynomial growth in $N$. 

In Section \ref{A} we complement Theorem \ref{IT2} with the corresponding upper bound for an arbitrary $N$-dimensional subspace of the space $\cC(\Om)$ of functions continuous   on a compact set $\Om \subset \R^d$. 

\begin{Theorem}\label{IT3} Let $X_N$ be an $N$-dimensional subspace of $\cC(\Omega)$. 
There exists a set $\xi=\{\xi^\nu\}_{\nu=1}^m$ of $m\le 9^N$ points such that for any $f\in X_N$ we 
have
\be\label{I2}
\|f\|_\infty \le 2\max_\nu |f(\xi^\nu)|.
\ee
\end{Theorem}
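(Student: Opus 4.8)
The plan is to prove Theorem~\ref{IT3} by a covering (net) argument on the unit sphere of $X_N$, using the fact that in a finite-dimensional normed space the unit ball admits $\e$-nets of cardinality exponential in the dimension. Throughout I treat $X_N$ as a real $N$-dimensional normed space under $\|\cdot\|_\infty$ (this is the regime in which the base $9$ is natural). By homogeneity it suffices to produce the points $\{\xi^\nu\}$ so that every $f\in X_N$ with $\|f\|_\infty=1$ satisfies $\max_\nu|f(\xi^\nu)|\ge 1/2$; rescaling an arbitrary $f$ then yields \eqref{I2}.

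First I would fix $\e=1/3$ and set $S:=\{g\in X_N:\|g\|_\infty=1\}$. A maximal $\e$-separated subset $\{g_j\}_{j=1}^M$ of $S$ is automatically an $\e$-net of $S$ in $\|\cdot\|_\infty$. The standard volumetric estimate bounds its size: the open balls of radius $\e/2$ centered at the $g_j$ are pairwise disjoint and contained in $(1+\e/2)B$, where $B$ is the unit ball of $(X_N,\|\cdot\|_\infty)$; comparing $N$-dimensional volumes in $X_N\cong\R^N$ gives $M\le(1+2/\e)^N=7^N\le 9^N$. Since $\Om$ is compact and each $g_j$ is continuous, I may choose a point $\xi^j\in\Om$ with $|g_j(\xi^j)|=\|g_j\|_\infty=1$, and I take $\xi:=\{\xi^j\}_{j=1}^M$, so that $m=M\le 9^N$.

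To verify the discretization inequality, take any $f\in X_N$ with $\|f\|_\infty=1$ and choose a net element $g_j$ with $\|f-g_j\|_\infty\le\e$. Then $|f(\xi^j)|\ge|g_j(\xi^j)|-\|f-g_j\|_\infty\ge 1-\e=2/3$, whence $\max_\nu|f(\xi^\nu)|\ge 2/3\ge 1/2$ and therefore $\|f\|_\infty=1\le\tfrac{3}{2}\max_\nu|f(\xi^\nu)|\le 2\max_\nu|f(\xi^\nu)|$, which is \eqref{I2}.

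There is no deep obstacle here: the argument is a soft compactness/covering estimate and the content lies in setting it up correctly. The two points that need care are (i) to net the sphere $S$ rather than the whole ball, so that each witness point $\xi^j$ genuinely sees a value of $g_j$ close to $1$ (netting the ball would force $|f(\xi^j)|\ge 1-2\e$ and weaken the constant), and (ii) to transfer the sup-attaining property of $g_j$ to $f$ through the $\e$-approximation, which costs exactly the additive $\e$ in the chain above. The base $9$ is not optimized, and a smaller one is obtainable by tuning $\e$; what cannot be avoided is the exponential growth in $N$, since Theorem~\ref{IT2} shows that a discretization constant bounded independently of $N$ already forces the number of sample points to grow exponentially.
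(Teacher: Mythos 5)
Your proof is correct and takes essentially the same approach as the paper's: cover the unit ball (or sphere) of $(X_N,\|\cdot\|_\infty)$ by a net of cardinality at most $(1+2/\varepsilon)^N$, take as sample points the maximizers of the net elements, and use the triangle inequality to transfer near-maximality to an arbitrary norm-one $f$. Your refinement of netting the sphere with $\varepsilon=1/3$ even yields slightly better parameters ($7^N$ points and constant $3/2$) than the paper's ball-netting with $\varepsilon=1/4$ ($9^N$ points and constant $2$).
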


Theorem \ref{IT2} shows that Theorem \ref{IT3} provides the best in the sense of order of $\log m$ result for sampling discretization of the uniform norm of $N$-dimensional subspaces. 
 Theorems \ref{IT2} and \ref{IT3} answer the question of the number of sample points needed for the inequality $\|f\|_\infty \le C\max_\nu |f(\xi^\nu)|$ with $C$ being and absolute constant. We now mention some results, where the constant $C$ is allowed to depend on $N$.
The following result was obtained in \cite{NoLN} (see Proposition 1.2.3 there).
\begin{Theorem}[\cite{NoLN}]\label{IT3Nov} Let $X_N$ be an $N$-dimensional subspace of $\cC(\Omega)$ and let $\e>0$. 
There exists a set $\xi=\{\xi^\nu\}_{\nu=1}^N$ of $N$ points such that for any $f\in X_N$ we 
have
\be\label{I2Nov}
\|f\|_\infty \le (N+\e)\max_\nu |f(\xi^\nu)|.
\ee
\end{Theorem}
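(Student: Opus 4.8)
The plan is to use the classical extremal (Fekete) point construction together with Lagrange interpolation inside $X_N$. First I would fix any basis $u_1,\dots,u_N$ of $X_N$ and introduce the generalized Vandermonde determinant
\be
D(t_1,\dots,t_N):=\det\bigl(u_j(t_i)\bigr)_{i,j=1}^N,
\ee
which is a continuous function on $\Om^N$. Since $u_1,\dots,u_N$ are linearly independent in $\cC(\Om)$, the values $(u_1(x),\dots,u_N(x))$, $x\in\Om$, cannot all lie in a proper subspace of the coordinate space — otherwise a nontrivial linear combination of the $u_j$ would vanish identically on $\Om$ — so these values span the whole coordinate space and $D$ is not identically zero. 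Hence $\sup_{\Om^N}|D|>0$, and I would choose points $\xi^1,\dots,\xi^N$ at which $|D|$ is within a factor $(1+\de)$ of this supremum, i.e. $|D(\xi^1,\dots,\xi^N)|\ge(1+\de)^{-1}\sup_{\Om^N}|D|$, where $\de$ is a small parameter to be fixed at the end.

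The key step is to read off the interpolation operator at these points. Because $D(\xi^1,\dots,\xi^N)\ne 0$, the sampling matrix is invertible, so every $f\in X_N$ is reconstructed from its values through the Lagrange-type identity $f(x)=\sum_{\nu=1}^N f(\xi^\nu)\ell_\nu(x)$, where each $\ell_\nu\in X_N$ satisfies $\ell_\nu(\xi^\mu)=\delta_{\nu\mu}$. Cramer's rule gives the explicit expression
\be
\ell_\nu(x)=\frac{D(\xi^1,\dots,\xi^{\nu-1},x,\xi^{\nu+1},\dots,\xi^N)}{D(\xi^1,\dots,\xi^N)}.
\ee
The numerator is again a value of $|D|$, hence bounded by $\sup_{\Om^N}|D|$, while the denominator is at least $(1+\de)^{-1}\sup_{\Om^N}|D|$; therefore $|\ell_\nu(x)|\le 1+\de$ for all $x\in\Om$ and all $\nu$. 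Plugging this into the interpolation identity yields
\be
|f(x)|\le\sum_{\nu=1}^N|f(\xi^\nu)|\,|\ell_\nu(x)|\le N(1+\de)\max_\nu|f(\xi^\nu)|,
\ee
and choosing $\de:=\e/N$ gives exactly the claimed bound $\|f\|_\infty\le(N+\e)\max_\nu|f(\xi^\nu)|$.

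I expect the only real subtlety — and the step I would be most careful about — to be the nonvanishing of $D$, since this is precisely what guarantees that $N$ sample points suffice to recover every element of $X_N$; everything else is routine linear algebra. The inequality $|\ell_\nu|\le 1+\de$ is the crux of the estimate. It is worth noting that if $\Om$ is compact, then $|D|$ attains its supremum on $\Om^N$ by continuity, so one may take genuine \emph{Fekete points} with $\de=0$ and obtain the sharper constant $N$ in place of $N+\e$; thus the $\e$ in the statement merely reflects the convenience of working with an approximate maximizer rather than an exact one.
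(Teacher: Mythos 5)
Your proof is correct: the paper itself does not prove Theorem \ref{IT3Nov} but quotes it from Novak's book \cite{NoLN} (Proposition 1.2.3), and your argument --- choosing points that nearly maximize the generalized Vandermonde determinant $\det\bigl(u_j(t_i)\bigr)$, then bounding the Lagrange fundamental functions $\ell_\nu$ by $1+\de$ via Cramer's rule --- is exactly the classical Fekete-point proof underlying that result, with all key steps (nonvanishing of $D$, the interpolation identity, the choice $\de=\e/N$) justified. Your closing observation is also accurate: since $\Omega$ is compact in this paper's setting, $|D|$ attains its supremum, so genuine maximizers give the sharper constant $N$, and the $\e$ in the statement is only needed when the supremum need not be attained.
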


   The following conditional result, which connects the upper bound in the discretization theorem for the uniform norm with the Nikol'skii-type inequality between $\cC$ and $L_2$ norms, was proved in \cite{DPTT}.

\begin{Theorem}[\cite{DPTT}]\label{IT4}
 Let  $\Omega := [0,1]^d$. Assume that a real $N$-dimensional subspace $Y_N\subset \cC(\Omega)$ satisfies the Nikol'skii-type inequality: for any $f\in Y_N$
 \be\label{I3}
 \|f\|_\infty \le H(N)\|f\|_2,\quad \|f\|_2 := \left(\int_\Omega |f(\bx)|^2d\mu\right)^{1/2},
 \ee
 where $\mu$ is the Lebesgue measure on $\Omega$.
 Then for any $a>1$ there exists a set $\xi(m)=\{\xi^j\}_{j=1}^m\subset \Omega$ with the property:  $m \le a N$ and
 for any $f\in Y_N$ we have  
$$
 \|f\|_\infty \le C(a)H(N)\max_{1\le j\le m} |f(\xi^j)|, 
$$
where $C(a)$ is a  positive constant.

 \end{Theorem}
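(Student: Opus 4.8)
The plan is to obtain the uniform-norm discretization from a \emph{weighted $L_2$ sampling discretization} with nearly $N$ nodes, invoking the Nikol'skii inequality \eqref{I3} only at the last step. Let $u_1,\dots,u_N$ be an $L_2(\Omega,\mu)$-orthonormal basis of $Y_N$ and set $\phi(\bx)=(u_1(\bx),\dots,u_N(\bx))$, so that $\int_\Omega \phi\phi^{\mathsf T}\,d\mu=I_N$; i.e. $\{\phi(\bx)\}$ is a continuous Parseval frame. The substantive input is a Batson--Spielman--Srivastava (BSS) type sparsification of this frame: for any $a>1$ there are $m\le aN$ points $\{\xi^j\}$ and weights $w_j\ge 0$ such that, after normalizing the lower frame constant to $1$, $\|f\|_2^2\le\sum_j w_j|f(\xi^j)|^2\le C_1(a)\|f\|_2^2$ for all $f\in Y_N$, with $C_1(a)$ the BSS condition number. (The continuous frame is handled by the finite BSS theorem after a preliminary fine discretization; see \cite{LT} for order-optimal $L_2$ discretization.)

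Granting such a rule, the deduction is short. From the elementary bound $\sum_j w_j|f(\xi^j)|^2\le\big(\sum_j w_j\big)\max_j|f(\xi^j)|^2$ and the lower frame inequality,
$$\|f\|_2\le\Big(\sum_j w_j\Big)^{1/2}\max_j|f(\xi^j)|,$$
and then \eqref{I3} gives $\|f\|_\infty\le H(N)\big(\sum_j w_j\big)^{1/2}\max_j|f(\xi^j)|$. Thus everything reduces to bounding the total mass $\sum_j w_j$ by a constant depending only on $a$. This is the one delicate point: a priori a node $\xi^j$ where $\sum_i|u_i(\xi^j)|^2$ is tiny could carry a huge weight.

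To tame $\sum_j w_j$ I would apply the discretization not to $Y_N$ but to the augmented space $\widetilde Y:=Y_N+\operatorname{span}\{\mathbf 1\}$, of dimension $\widetilde N\le N+1$, using the orthonormal basis $u_1,\dots,u_N$ together with the normalized component $g$ of $\mathbf 1$ orthogonal to $Y_N$. The resulting two-sided frame bound holds for all $h\in\widetilde Y$. Applying its lower bound to $f\in Y_N$ exactly as above, and its upper bound to the constant $\mathbf 1\in\widetilde Y$, yields
$$\sum_j w_j=\sum_j w_j|\mathbf 1(\xi^j)|^2\le C_1(a)\|\mathbf 1\|_2^2=C_1(a)\mu(\Omega)=C_1(a),$$
since $\mu$ is normalized Lebesgue measure on $[0,1]^d$. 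Substituting back gives $\|f\|_\infty\le C_1(a)^{1/2}H(N)\max_j|f(\xi^j)|$, i.e. the assertion with $C(a)=C_1(a)^{1/2}$. The count $m\le a\widetilde N\le a(N+1)$ is brought down to $m\le aN$ by running BSS with a slightly smaller parameter $a'\in(1,a)$ (so $a'(N+1)\le aN$ for large $N$) and handling small $N$ directly.

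I expect the genuine difficulty to lie entirely in the weighted $L_2$ discretization step---securing $m\le aN$ nodes with condition number depending only on $a$---which is what forces the use of a BSS-type theorem rather than a crude discretization with a fixed constant; equal weights would make $\sum_j w_j$ trivial but are unavailable for $a$ near $1$. The constant-function augmentation that bounds $\sum_j w_j$, the single use of \eqref{I3}, and the node-count bookkeeping are then routine.
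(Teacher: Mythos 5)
Your proof is correct and follows essentially the same route as the paper's own treatment of this statement (the proof of Theorem \ref{DT4} in Section \ref{D}): a weighted $L_2$ sampling discretization of the subspace, augmentation by the constant function to control the total weight $\sum_j w_j$ (your $\widetilde Y$ is exactly the device of Remark \ref{DR1}), and then a single use of the Nikol'skii inequality after pulling $\max_j|f(\xi^j)|$ out of the weighted sum. The one substantive difference is the $L_2$ input: the paper imports Theorem \ref{DT5} (\cite{DPSTT2}, \cite{LT}), which has absolute constants but only $m\le C_1'N$ points, and as a result it proves a version that is more general (complex subspaces, arbitrary probability measure) but, as the paper itself notes, with a weaker restriction on the number of points than the $m\le aN$ of Theorem \ref{IT4}; you instead import a BSS-type sparsification (\cite{BSS}) giving $m\le aN$ points at the price of an $a$-dependent condition number $C_1(a)$, which is exactly what is needed to recover the stated theorem in full strength and is how the original proof in \cite{DPTT} proceeds. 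Two points you leave implicit deserve to be spelled out in a complete write-up: first, the reduction of the continuous frame to a finite one before applying the finite BSS theorem (uniform continuity and fine Riemann sums, as in Section \ref{A} of the paper); second, the ``small $N$'' regime $N\le C/(a-1)$, where your re-parametrization $a'(N+1)\le aN$ with $a'>1$ is impossible --- there one can instead take $m=N\le aN$ points via Theorem \ref{IT3Nov} and use $H(N)\ge 1$ (which holds since $\mu$ is a probability measure) to obtain the constant $N+\e\le C(a)$. With these noted, the argument goes through as you describe.
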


In Section \ref{A} we prove a result in a style of Theorem \ref{IT4}. We now describe that result. 
Let $\Omega$ be a compact subset of $\R^d$ and let $\mu$ be a probability measure on $\Omega$. Let as above $X_N$ be an $N$-dimensional subspace of $\cC:=\cC(\Omega)$. Assume that there exists an orthonormal basis $\{u_j\}_{j=1}^N$ with respect to the measure $\mu$ of the subspace $X_N$. Denote
$$
\D_{X_N}(x,y) := \sum_{j=1}^N u_j(x)\bar u_j(y), \quad x,y \in \Omega,
$$
the corresponding Dirichlet kernel. Then for any $f\in X_N$ we have
$$
f(x) = \int_{\Omega} \D_{X_N}(x,y)f(y)d\mu.
$$
Consider the following problem of constrained best $M$-term approximation with respect to the bilinear dictionary. Define
\begin{align*}
\cB_M(X_N^\perp):= &\Big\{\cW\,:\, \cW(x,y)= \sum_{i=1}^M w_i(x)v_i(y),\, w_i\in \cC,\,v_i\in L_1, i=1,\dots,M,\\
&\text{satisfying the condition: For any $f\in X_N$ and each $x\in \Omega$ we have}\\
&\int_{\Omega} \cW(x,y) f(y)d\mu =0\Big\}.
\end{align*}
Consider
$$
\sigma_M^c(\D_{X_N})_{(\infty,1)} 
 := \inf_{\cW\in \cB_M(X_N^\perp)} \sup_{x\in \Omega}\|\D_{X_N}(x,\cdot) - \cW(x,\cdot)\|_1.
$$
Under a certain condition (see Condition D below) on the subspace $X_N$ (see Theorem \ref{AT1} below) we prove in Section \ref{A} that there exists a set of points $\{\xi^\nu\}_{\nu=1}^m$ such that for any $f\in X_N$
\be\label{I4}
\|f\|_\infty \le 6\sigma_M^c(\D_{X_N})_{(\infty,1)}\left(\max_{\nu}|f(\xi^\nu)|\right).
\ee
We point out that inequality (\ref{I4}) is a condition result -- it is proved under Condition D, which is 
a condition on the $L_1$ norm discretization of functions from a special subspace related to the subspace $X_N$. Therefore, in order to apply inequality (\ref{I4}) we need to establish the corresponding discretization theorem. Here we use known results on the $L_1$ discretization 
(see Theorem \ref{AT2} below).

In Section \ref{B} we study a problem closely connected with the discussed above problem of
estimation of the quantities  $\sigma_M^c(\D_{X_N})_{(\infty,1)}$ in a special case of trigonometric polynomials. In this case $X_N$ is a subspace $\Tr(Q)$ of trigonometric polynomials with frequencies from $Q\subset \Z^d$, $|Q|=N$. Then $\Omega = [0,2\pi]^d$, $\mu$ is the normalized Lebesgue measure, and for $\bx,\by\in [0,2\pi]^d$ we have
$$
\D_{\Tr(Q)}(\bx,\by) = \D_Q(\bx-\by),\qquad    \D_Q(\bx):= \sum_{\bk\in Q} e^{i(\bk,\bx)}.
$$
Clearly, for any  $g\in \Tr(\La)$, $Q\cap \Lambda =\emptyset$, $|\Lambda|=M$, we have
$$
\sigma_M^c(\D_{\Tr(Q)})_{(\infty,1)} \le \|\D_Q -g\|_1.
$$
This motivates us to construct  generalized de la Vall{\'e}e Poussin kernels for sets $Q$. Namely, for a given $Q\subset \Z^d$ and $M\in\N$ we are interested in construction of an $M$-term trigonometric polynomial $\V_{Q,M}$ such that $\hat \V_{Q,M}(\bk) =1$ for $\bk \in Q$ with small $L_1$ norm. In particular, we prove in Section \ref{B} (see Theorem \ref{BT1}) that for any $Q\subset \Z^d$ there exists $\V_{Q,M}$ such that $\|\V_{Q,M}\|_1 \le 2$ 
provided $M\ge 2^{4|Q|}$. Also, we study there the following question. Find necessary and sufficient conditions on $M$, which guarantee existence of $\V_{Q,M}$ with the property $\|\V_{Q,M}\|_1 \le C_1|Q|^\alpha$, $\alpha \in [0,1/2)$, for all $Q$ of cardinality $n$. These conditions are given in Corollary \ref{BC1} and Remark \ref{BR1}. 
Roughly, they state that $\log M$ should be of order $n^{1-2\alpha}$.

It is known how sampling discretization results for arbitrary subspaces $X_N\subset L_q(\Omega,\mu)$ can be used for proving general inequalities between optimal sampling recovery in $L_q$ and the Kolmogorov widths in the uniform norm of function classes. In the case $q=2$ it was demonstrated in \cite{VT183}. In Section \ref{E} we apply sampling discretization results from Section \ref{A} and obtain general inequalities for optimal sampling recovery in the uniform norm (see, for instance, Theorem \ref{ET2}). 

In Section \ref{D} we give some further comments on sampling discretization results in the uniform norm  and on techniques used for their proofs.

\section{Upper bounds for discretization in the uniform norm}
\label{A}

We begin with a simple proof of the fact that for discretization of the uniform norm of elements of any $N$-dimensional subspace of $\cC(\Omega) $ it is sufficient to use $e^{CN}$
sample points. We now prove Theorem \ref{IT3}.
 
{\bf Proof of Theorem \ref{IT3}.} The proof is based on the idea of covering numbers. 
Let $X$ be a Banach space and let $B_X$ denote the unit ball of $X$ with the center at $0$. Denote by $B_X(y,r)$ a ball with center $y$ and radius $r$: $\{x\in X:\|x-y\|\le r\}$. For a compact set $A$ and a positive number $\e$ we define the covering number $N_\e(A,X)$
 as follows
$$
N_\e := N_\e(A,X) 
:=\min \{n : \exists y^1,\dots,y^n, \, :\,A\subseteq \cup_{j=1}^n B_X(y^j,\e)\}.
$$
It is well known (see, for instance, \cite{VTbook}, p.145) that for any $N$-dimensional Banach space 
$X$ we have
\be\label{A1}
N_\e(B_X,X) \le (1+2/\e)^N.
\ee
We apply bound (\ref{A1}) in the case of $X=X_N$ equipped with the uniform norm $\cC(\Omega)$ with $\e=1/4$ and find a covering net $\cN_{1/4} = \{g^j\}_{j=1}^{N_{1/4}}$, $N_{1/4} \le 9^N$. Let $\xi^\nu$ be a point of maximum of $|g^\nu(x)|$ over $\Omega$. We now prove that the set $\{\xi^\nu\}_{\nu=1}^{N_{1/4}}$ satisfies (\ref{I2}). Clearly, it is sufficient to check (\ref{I2}) for $f\in X_N$, $\|f\|_\infty=1$. By the covering property of the net $\cN_{1/4}$ there exists a $g^{j(f)}\in \cN_{1/4}$ such that
$\|f-g^{j(f)}\|_\infty \le 1/4$. Therefore, we have
$$
|f(\xi^{j(f)})| \ge |g^{j(f)}(\xi^{j(f)})| -1/4 = \|g^{j(f)}\|_\infty-1/4 \ge 1/2,
$$
which proves (\ref{I2}). The proof of Theorem \ref{IT3} is complete.

Note, that it is known (see Theorem \ref{IT2} above and \cite{DPTT}, Theorem 6.7) that even in the case of subspaces $X_N$ spanned by the exponentials $e^{ikx}$, $k\in Q\subset \Z$, $|Q|=N$ the exponential bound in Theorem \ref{IT3} cannot be improved (see Section \ref{B} for details). Therefore, we would like to find some 
characteristics of a subspace $X_N$ which allow us to prove better upper bounds for the number of sampling points for good discretization. 
We now discuss one of such characteristics. Let $\Omega$ be a compact subset of $\R^d$ and let $\mu$ be a probability measure on $\Omega$. Let as above $X_N$ be an $N$-dimensional subspace of $\cC(\Omega)$. We now prove bound (\ref{I4}).

{\bf Proof of  (\ref{I4}).} First, we introduce the condition mentioned right before inequality (\ref{I4}) -- Condition D. Second, we prove Theorem \ref{AT1}, which contains inequality (\ref{I4}).
Let $\cW_M=\sum_{i=1}^M w_i^*v_i^*\in \cB_M(X_N^\perp)$ be such that
$$
\sup_{x\in \Omega}\|\D_{X_N}(x,\cdot) - \cW_M(x,\cdot)\|_1 \le 2\sigma_M^c(\D_{X_N})_{(\infty,1)} .
$$
Assume as above that $\{u_j\}_{j=1}^N$ is an orthonormal basis   with respect to the measure $\mu$ of the subspace $X_N$. Consider a subspace of $L_1(\Omega,\mu)$
$$
Y_S :=\sp \{v_j^*(y)f(y), \bar u_i(y)f(y)\,:\, f\in X_N, \, j=1,\dots,M, \, i=1,\dots,N\}.
$$
Then $S:=\dim Y_S \le (M+N)N$.

{\bf Condition D.} Suppose that $X_N$ is such that there exists a set of points $\{\xi^\nu\}_{\nu=1}^m$ and a set of positive weights $\{\la_\nu\}_{\nu=1}^m$ such that for any $g\in Y_S$
\be\label{A2}
\frac{1}{2} \|g\|_1 \le \sum_{\nu=1}^m \la_\nu |g(\xi^\nu)| \le \frac{3}{2}\|g\|_1 . 
\ee

\begin{Theorem}\label{AT1} Let $X_N\subset \cC(\Omega)$ be an $N$-dimensional subspace. Assume that function $1$ belongs to $X_N$ (if not, we include it, which results in increase of dimension by 1). Assume that $X_N$ satisfies Condition D. Then for the set of points $\{\xi^\nu\}_{\nu=1}^m$ from Condition D we have: For any $f\in X_N$
$$
\|f\|_\infty \le 6\sigma_M^c(\D_{X_N})_{(\infty,1)}\left(\max_{\nu}|f(\xi^\nu)|\right).
$$
\end{Theorem}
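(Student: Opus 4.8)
The plan is to combine the reproducing formula for $X_N$ with the annihilation property built into $\cB_M(X_N^\perp)$, and then to turn the resulting integral into a discrete sum using Condition D twice.

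First I would fix $f\in X_N$ and, using compactness of $\Omega$ and continuity of $f$, pick a point $x_0\in\Omega$ with $|f(x_0)|=\|f\|_\infty$. Since $\cW_M\in\cB_M(X_N^\perp)$ satisfies $\int_\Omega \cW_M(x_0,y)f(y)\,d\mu=0$, subtracting this from the reproducing identity $f(x_0)=\int_\Omega \D_{X_N}(x_0,y)f(y)\,d\mu$ produces
$$
f(x_0)=\int_\Omega K(y)f(y)\,d\mu,\qquad K(y):=\D_{X_N}(x_0,y)-\cW_M(x_0,y),
$$
whence $\|f\|_\infty=|f(x_0)|\le\|Kf\|_1$.

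The central observation, and the step I expect to be the only delicate one, is that both $Kf$ and $K$ lie in the space $Y_S$ on which Condition D operates. Writing $\D_{X_N}(x_0,y)=\sum_j u_j(x_0)\bar u_j(y)$ and $\cW_M(x_0,y)=\sum_i w_i^*(x_0)v_i^*(y)$, the quantities $u_j(x_0),w_i^*(x_0)$ are fixed scalars, so $Kf$ is a linear combination of the generators $\bar u_j(y)f(y)$ and $v_i^*(y)f(y)$ of $Y_S$; thus $Kf\in Y_S$. For $K$ itself I would invoke the hypothesis $1\in X_N$: specializing $f=1$ shows $\bar u_j,v_i^*\in Y_S$, so $K\in Y_S$ as well. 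This is precisely where the assumption $1\in X_N$ is used, and dropping it would break the membership of $K$ in the discretized space.

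Finally I would run the two inequalities of (\ref{A2}) in opposite directions---the lower bound applied to $g=Kf$ and the upper bound applied to $K$---together with the pointwise estimate $|K(\xi^\nu)f(\xi^\nu)|\le|K(\xi^\nu)|\max_\nu|f(\xi^\nu)|$:
$$
\|f\|_\infty\le\|Kf\|_1\le 2\sum_{\nu=1}^m\la_\nu|K(\xi^\nu)||f(\xi^\nu)|\le 2\Big(\max_\nu|f(\xi^\nu)|\Big)\sum_{\nu=1}^m\la_\nu|K(\xi^\nu)|\le 3\|K\|_1\Big(\max_\nu|f(\xi^\nu)|\Big).
$$
Since the choice of $\cW_M$ gives $\|K\|_1=\|\D_{X_N}(x_0,\cdot)-\cW_M(x_0,\cdot)\|_1\le 2\sigma_M^c(\D_{X_N})_{(\infty,1)}$, the product of constants is $6$, which is exactly the claimed bound. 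Apart from the $Y_S$-membership check, the argument is a routine chain of inequalities once the reproducing-plus-annihilation identity has been set up.
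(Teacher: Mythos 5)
Your proposal is correct and follows essentially the same route as the paper's proof: the reproducing identity minus the annihilation property of a near-optimal $\cW_M\in\cB_M(X_N^\perp)$, the observation that $\cK(x,\cdot)f(\cdot)\in Y_S$ (for the lower discretization bound) and, via $1\in X_N$, that $\cK(x,\cdot)\in Y_S$ (for the upper one), yielding the constant $2\cdot\tfrac32\cdot 2=6$. The only differences are cosmetic: you evaluate at a maximizing point $x_0$ instead of bounding $|f(x)|$ for arbitrary $x$, and you spell out the $Y_S$-membership verification that the paper leaves as "follows directly from the definition."
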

\begin{proof} Take any $f\in X_N$. By the properties of the kernels $\D_{X_N}$ and $\cW_M$ we have
for $\cK(x,y):=\D_{X_N}(x,y)-\cW_M(x,y)$
$$
|f(x)| =   |\int_\Omega \cK(x,y)f(y)d\mu|\le \int_\Omega |\cK(x,y)f(y)|d\mu.
$$
It follows directly from the definition of the subspace $Y_S$ that for each $x\in \Omega$ we have $\cK(x,y)f(y) \in Y_S$. Therefore,   we use the left inequality of (\ref{A2}) and continue
$$
\le 2\sum_{\nu=1}^m \la_\nu |\cK(x,\xi^\nu)f(\xi^\nu)| \le \left(\max_{\nu}|f(\xi^\nu)|\right) 2\sum_{\nu=1}^m \la_\nu |\cK(x,\xi^\nu)|.
$$
Our assumption that function $1$ belongs to $X_N$ and the definition of $Y_S$ imply that for each $x\in \Omega$ we have $\cK(x,y) \in Y_S$. Applying the right inequality of (\ref{A2}), we obtain
$$
\le \left(\max_{\nu}|f(\xi^\nu)|\right) 3\|\cK(x,\cdot)\|_1 \le 6\sigma_M^c(\D_{X_N})_{(\infty,1)}\left(\max_{\nu}|f(\xi^\nu)|\right).
$$
Theorem \ref{AT1} is proved.
\end{proof}
This completes the proof of inequality (\ref{I4}).

\begin{Remark}\label{AR1} We can modify Condition D by replacing constants $1/2$ and $3/2$ in (\ref{A2}) by positive constants $c_0$ and $C_0$. Then Theorem \ref{AT1} holds with the constant $6$ replaced by $C(c_0,C_0)$.
\end{Remark}

Theorem \ref{AT1} is a conditional result, which guarantees good discretization of the uniform norm 
under a certain condition on the subspace $X_N$. We now demonstrate that Theorem \ref{AT1} provides a rather powerful tool for the uniform norm discretization. 

First of all we comment on the Condition D. The following result is from \cite{DPSTT2} (see Theorem 2.3 there).

\begin{Theorem}[\cite{DPSTT2}]\label{AT2} Given  $1\leq p< 2$,      an arbitrary  $N$-dimensional subspace $X_N$  of $L_p(\Omega,\mu)$ and any $\va\in (0, 1)$,  there exist   $\xi^1,\ldots, \xi^m\in\Omega$ and   $w_1,\ldots, w_m>0$  such that  $m\leq C_p(\va) N\log^3 N$ with $C_p(\va)$ depending only on $p$, $\va$	 and
	\begin{align}\label{C1a}
	(1-\va)	\|f\|_{p} \leq   \left(\sum_{\nu=1}^m w_\nu |f(\xi^\nu)|^p\right)^{\frac1p}\leq (1+\va) \|f\|_{p},\   \  \forall f\in X_N.
	\end{align}	
\end{Theorem}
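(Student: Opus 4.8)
The plan is to realize the sampling points as an i.i.d.\ random sample drawn from a well-chosen importance-sampling density and to choose the weights so that the empirical $p$-th power is an unbiased estimator of $\|f\|_p^p$. Concretely, I would fix a probability density $\rho$ on $\Omega$ (with respect to $\mu$), sample $\xi^1,\dots,\xi^m$ independently from $\rho\,d\mu$, and set $w_\nu:=\frac{1}{m\,\rho(\xi^\nu)}$. Then for each fixed $f\in X_N$ one has $\bE\big[\sum_{\nu=1}^m w_\nu|f(\xi^\nu)|^p\big]=\int_\Omega\frac{|f|^p}{\rho}\,\rho\,d\mu=\|f\|_p^p$, so the only issue is to control the fluctuation of $S(f):=\sum_\nu w_\nu|f(\xi^\nu)|^p$ around its mean, uniformly over the unit ball $B:=\{f\in X_N:\|f\|_p\le1\}$. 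By homogeneity it suffices to show $\sup_{f\in B}|S(f)-\|f\|_p^p|\le\va'$ for a suitable $\va'=\va'(\va,p)$, which then converts into the stated two-sided multiplicative inequality.

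The choice of $\rho$ is the technical heart. I would take $\rho$ proportional to the $L_p$ Lewis weight of $X_N$; the property I need is the one-sided (leverage / Christoffel) bound $\sup_{x\in\Omega}\frac{|f(x)|^p}{\rho(x)}\le C\,N\,\|f\|_p^p$ for all $f\in X_N$, which is exactly a Nikol'skii-type inequality in disguise, and which uses $1\le p<2$ so that the total sensitivity sums to $O(N)$. This bound tames each summand: for $f\in B$ we get $0\le w_\nu|f(\xi^\nu)|^p\le\frac{CN}{m}$, and a similar estimate controls the variance of $S(f)$. Hence for a single fixed $f\in B$ Bernstein's inequality gives $\bP(|S(f)-\|f\|_p^p|>\va')\le 2\exp(-c\,\va'^2 m/N)$, so that $m$ of order $N\log(\cdot)/\va'^2$ samples suffice to control any prescribed finite family of functions.

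It remains to upgrade the pointwise concentration to a bound uniform over the infinite set $B$. Here I would discretize $B$ in the uniform metric: since $B$ lives in the $N$-dimensional space $X_N$ and, by the Nikol'skii inequality above, $B$ is bounded in $\cC(\Omega)$ by $CN^{1/p}$, the covering number estimate (\ref{A1}) provides a net of cardinality $\exp(CN\log(N/\va'))$. A union bound of the per-point Bernstein estimate over this net, together with a continuity/chaining (Dudley) argument to pass from the net to all of $B$, yields the uniform bound with $m\le C_p(\va)N\log^3N$; the extra logarithmic factors are produced precisely by the interplay between the net entropy $\sim N\log N$ and the chaining integral. Finally one passes from the random existence statement to a deterministic set of points by taking any realization on which the favorable event holds. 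The main obstacle I anticipate is the concentration step for $p<2$: the summands $|f(\xi^\nu)|^p/\rho(\xi^\nu)$ are genuinely heavy-tailed, and without the Lewis-weight normalization their worst-case size and variance are not controlled; constructing $\rho$ with the leverage bound above, and then balancing the heavy-tail truncation against the metric entropy of $B$, is where the real work (and the suboptimal power of $\log N$) lies.
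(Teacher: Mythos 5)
Your proposal reproduces, essentially verbatim, the first half of the proof given in \cite{DPSTT2} (this paper only cites Theorem \ref{AT2}; the proof lives there): a Lewis-type change of density giving the leverage/Nikol'skii bound $\sup_{x}|f(x)|^p/\rho(x)\le CN\|f\|_p^p$, i.i.d.\ sampling with importance weights $w_\nu=1/(m\rho(\xi^\nu))$, and Bernstein concentration for each fixed $f$; you also correctly identify that $1\le p<2$ enters exactly through the $O(N)$ total sensitivity. The gap is in the uniformity step. With the leverage bound, Bernstein gives per-function failure probability $\exp(-c\va'^2m/N)$, while the net you build from the volumetric estimate (\ref{A1}) has log-cardinality of order $N\log(N/\va')$: the unit $L_p$-ball has sup-norm diameter $\asymp N^{1/p}$, and the Lipschitz transfer of $S(f)$ from the net to all of $B$ forces a mesh $\delta\asymp \va' N^{-(p-1)/p}$. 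A union bound therefore needs $m\gtrsim \va'^{-2}N^2\log(N/\va')$ --- quadratic in $N$, not $N\log^3 N$.

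Chaining does not repair this in the way you claim, because chaining requires covering numbers at every scale, and the volumetric profile $\log N_\delta(B,\|\cdot\|_\infty)\asymp N\log(N^{1/p}/\delta)$ is the only entropy information your construction supplies. Fed this profile, any Bernstein-based chaining (Dudley or generic chaining) still produces a sub-exponential term of order $N\cdot(\text{sup-diameter of the summand class})\asymp N\cdot(N/m)$, so the machinery again outputs $m\gtrsim N^2$; the $\log^3 N$ cannot emerge from ``the interplay between the net entropy $\sim N\log N$ and the chaining integral.'' What the actual proof does at this point is establish genuinely improved entropy estimates for unit $L_p$-balls of subspaces in the uniform norm: after the change of density, $\varepsilon_k(X_N^p,L_\infty)\lesssim (N/k)^{1/p}$ up to logarithmic factors for $k\le N$, i.e.\ at $k\asymp N$ the ball can be covered at scale $\mathrm{polylog}(N)^{1/p}$ rather than at the volumetric scale $N^{1/p}$. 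This estimate does not follow from the pointwise leverage bound --- it uses the $L_p$-subspace structure and $p<2$ in an essential way, and it is the central technical contribution of \cite{DPSTT2} (hence that paper's title, ``Entropy numbers and Marcinkiewicz-type discretization theorem''). Only with this entropy input does the chaining sum converge at $m\asymp N\log^3 N$. In short: your probabilistic framework is the right one, but the step you dismiss as bookkeeping is exactly the missing key lemma, and at the entropy level you have available your scheme cannot beat $m\sim N^2$.
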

We note that there are results on embeddings of finite dimensional subspaces of $L_p$ into $\ell_p^n$, which are related to sampling discretization of the $L_p$ norm (see, for instance, \cite{BLM} and \cite{JS}). 
Professor G. Schechtman kindly pointed out to us that an  improvement  with respect to  the logarithmic factor  of  the estimate  in  Theorem \ref{AT2}   can be obtained using techniques developed for the above embedding problem (see \cite{JS} for those techniques).

An important good feature of Theorem \ref{AT2} is that it applies to any subspace. Using Theorem \ref{AT2} for $p=1$ with $\va=1/2$, we obtain that in Condition D we can guarantee that 
\be\label{A3}
m\le CS(\log S)^3.
\ee
Note, that in some cases we can use special structure of the subspace $Y_S$ in order to improve 
bound (\ref{A3}).  

Second, we demonstrate that Theorem \ref{AT1} combined with (\ref{A3}) and known results on the 
uniform approximation property gives a result similar to Theorem \ref{IT3}. 
There is a general theory of uniform approximation property (UAP), which can be used for proving some estimates for discretization in the uniform norm. We give some definitions from this theory.
For a given subspace $X_N$ of $L_p$, $\dim X_N=N$, and a constant $K>1$, let $k_p(X_N,K)$ be the smallest $k$ such that there is an operator $I_{X_N}: L_p\to L_p$, with $I_{X_N}(f)=f$ for $f\in X_N$, $\|I_{X_N}\|_{L_p\to L_p} \le K$, and rank$( I_{X_N}) \le k$. Define
$$
k_p(N,K):= \sup_{X_N:\dim X_N=N} k_p(X_N,K).
$$
In case $p=\infty$ the $L_\infty$ is the space of continuous functions. The following bound is proved 
in \cite{FJS}
\be\label{A4}
k_\infty(N,K) \le e^{C(K)N}.
\ee
In order to combine bound (\ref{A4}) with Theorem \ref{AT1} we need a representation of an operator of finite rank as an integral operator. For that purpose we show that a general compact set $\Omega$ in the problem of sampling discretization of the uniform norm can always be replaced by a finite set of points. Indeed, let $X_N\subset \cC(\Omega)$ and let $\{u_j\}_{j=1}^N$ be an orthonormal basis 
of $X_N$ with respect to some probability measure $\mu$ on $\Omega$. Take $\e>0$ and using the fact of uniform continuity of functions $u_j$, $j=1,\dots,N$, find $\de>0$ such that for all $x,y\in \Omega$, $\|x-y\|_2\le \de$ we have
$$
|u_j(x)-u_j(y)| \le \e N^{-1/2}.
$$
Let $\cN_\de =\{x^k\}_{k=1}^{N_\de}$ be a $\de$-net of $\Omega$. Then for any $f=\sum_{j=1}^N c_ju_j$ and any $x\in \Omega$ we find $y(x)\in \cN_\de$ such that $\|x-y(x)\|_2\le \de$ and obtain
$$
|f(x)-f(y(x))| \le \sum_{j=1}^N |c_j| \e N^{-1/2} \le \e \|f\|_2 \le \e \|f\|_\infty.
$$
Therefore, for any $f\in X_N$ we have
$$
(1-\e)\|f\|_\infty \le \max_{y\in \cN_\de}|f(y)| \le \|f\|_\infty.
$$
So, we now assume that $\Omega=\{x^j\}_{j=1}^J$. Introduce the measure $\mu(x^j) =1/J$, $j=1,\dots,J$. 
Let $X_N\subset \cC$ be an $N$-dimensional subspace. Assume that function $1$ belongs to $X_N$ (if not, we include it, which results in increase of dimension by 1). The corresponding operator $I_{X_N}$ has rank $M\le e^{C(K)N}$. Represent this operator in the form  
$$
I_{X_N}(f)(x) = \int_\Omega \cK(x,y)f(y)d\mu,\quad \cK(x,y) = \sum_{j=1}^M u_j(x)v_j(y), 
$$
with $ u_j\in \cC$ and  $v_j\in L_1$, $j=1,\dots,M$. Then, 
$$
\sup_x\|\cK(x,\cdot)\|_1 = \|I_{X_N}\|_{\cC \to \cC} \le K, 
$$
which implies that $\sigma_{M+N}^c(\D_{X_N})_{(\infty,1)}\le K$.
Consider a subspace of $L_1$
$$
Y_S := \sp\{v_j(y)f(y)\,:\, f\in X_N, \, j=1,\dots,M\}.
$$
Then $S:=\dim Y_S \le MN$. Thus, by (\ref{A3}) Condition D is satisfied with 
$m\le e^{C'(K)N}$ and Theorem \ref{AT1} gives us an analog of Theorem \ref{IT3}. 

Third, let us consider a known good example of the trigonometric polynomials.  Here, we specify $\Omega := \T^d := [0,2\pi)^d$ and $\mu$ to be the normalized Lebesgue measure $d\mu = (2\pi)^{-d}d\bx$.
By $Q$ we denote a finite subset of $\Z^d$, and $|Q|$ stands for the number of elements in $Q$. Let as above
$$
\Tr(Q):= \left\{f: f=\sum_{\bk\in Q}c_\bk e^{i(\bk,\bx)},\  \  c_{\bk}\in\mathbb{C}\right\}.
$$
 Consider $d$-dimensional parallelepipeds
$$
\Pi(\mathbf N,d) :=\bigl \{\mathbf a\in \Z^d  : |a_j|\le N_j,\
j = 1,\dots,d \bigr\} ,\quad \bN=(N_1,\dots,N_d),
$$
where $N_j$ are nonnegative integers and the corresponding subspaces of the trigonometric polynomials
$$
\Tr(\bN,d) := \Tr(\Pi(\bN,d)).
$$
Then $\dim \Tr(\bN,d) = \vartheta(\mathbf N) := \prod_{j=1}^d (2N_j  + 1)$.
 The following result was obtained by Marcinkiewicz-Zygmund  in the case $d=1$  (see \cite{Z}, Ch.10, \S7). For the multivariate analogs see \cite{VTbookMA}, p.102, Theorem 3.3.15. There are positive constants $C_i(d)$, $i=1,\dots,4$, such that for any $\bN$ there exists a set of points $\xi(m)=\{\xi^j\}_{j=1}^m$, $m\le C_1(d)\vartheta(\mathbf N)$ with the properties: For any $f\in \Tr(\bN,d)$ we have
\be\label{A5}
 C_2(d)\|f\|_1 \le \frac{1}{m} \sum_{j=1}^m |f(\xi^j)| \le C_3(d) \|f\|_1
\ee
and
\be\label{A5'}
\|f\|_\infty \le C_4(d)\|f\|_{\xi(m)}.
\ee

We now demonstrate how relation (\ref{A5'})  can be obtained with a help of Theorem \ref{AT1}. In this case $X_N = \Tr(\bN,d)$ with $N=\vartheta(\mathbf N)$. The Dirichlet kernel is
$$
\D_{X_N}(\bx,\by) =\D_\bN(\bx-\by):= \prod_{j=1}^d \D_{N_j}(x_j-y_j),
$$
where $\bx=(x_1,\dots,x_d)$, $\by=(y_1,\dots,y_d),
$ and $\D_n(t)$ is the classical Dirichlet kernel (see \cite{VTbookMA}, p.6). It is known that the Dirichlet 
kernels $\D_n(t)$ can be extended to the de la Vall{\'e}e Poussin kernels $\V_n(t)$ with the property 
\be\label{A6}
\|\V_n(t)\|_1 \le 3.
\ee
Therefore, extending $\D_\bN(\bx-\by)$ to the 
$$
\V_\bN(\bx-\by):= \prod_{j=1}^d \V_{N_j}(x_j-y_j)
$$
we find that $Y_S \subset \Tr(3\bN,d)$. Using (\ref{A5})  we find that the variant of Condition D with constants $c_0(d)$ and $C_0(d)$ holds for some  $m\le C(d) \vartheta(\mathbf N)$. Applying Theorem \ref{AT1} and Remark \ref{AR1} we conclude the proof of (\ref{A5'}). 

The above two examples -- general subspaces $X_N$ and subspaces of the trigonometric polynomials $\Tr(\bN,d)$ -- show that Theorem \ref{AT1} and the technique of its proof work well in both extreme situations. 

\section{Trigonometric polynomials}
\label{B}

The above example of the subspaces $\Tr(\bN,d)$ shows that the sampling discretization of the uniform norm of elements of these subspaces can be successfully done with the minimal in the sense of order number of sampling points. It turns out that the sampling discretization of the uniform norm of elements of subspaces $\Tr(Q)$ with arbitrary $Q$ is as difficult as the general problem.  

Theorem \ref{IT2} shows that Theorem \ref{IT3} provides the best in the sense of order of $\log m$ result for sampling discretization of the uniform norm of $N$-dimensional subspaces. We now discuss
a connection between sampling discretization of the uniform norm of elements of $\Tr(Q)$ and 
best $M$-term approximation with respect to the trigonometric system $\Tr^d$. For $Q\subset \Z^d$ denote
$$
\D_Q(\bx):= \sum_{\bk\in Q} e^{i(\bk,\bx)}.
$$
We are interested in 
$$
\sigma_M^-(\D_Q)_1 := \inf_{\La\subset \Z^d,\, |\La|=M, \, Q\cap \Lambda =\emptyset}\inf_{c_\bk, \bk\in\Lambda}\left\|\D_Q(\bx)-\sum_{\bk\in\Lambda} c_\bk e^{i(\bk,\bx)}\right\|_1.
$$
Suppose $g_n\in \Tr(\La)$, $Q\cap \Lambda =\emptyset$, $|\La|=M$, $|Q|=N$, is such that
\be\label{A7}
\|\D_Q -g_M\|_1 \le 2\sigma_M^-(\D_Q)_1.
\ee
Then by Theorem \ref{AT1} and by inequality (\ref{A3}) we obtain that there exists a set of points 
$\xi=\{\xi^\nu\}_{\nu=1}^m$ such that for any $f\in \Tr(Q)$ we have
\be\label{A8}
\|f\|_\infty \le 6 \sigma_M^-(\D_Q)_1\max_{\nu}|f(\xi^\nu)|
\ee
with $m$ satisfying
\be\label{A9}
m\le C(d)(N+M)N (\log ((N+M)N))^3.
\ee
We formulate this as a separate statement.

\begin{Proposition}\label{BP1} Let $d\in \N$. There exists a positive constant $C(d)$ such that for 
any $Q \subset \Z^d$, $|Q|=N$, and any $M\ge N$ we have
$$
D(Q,m,d) \le 6\sigma_M^-(\D_Q)_1
$$
provided
$$
m\ge C(d)MN (\log M)^3.
$$
\end{Proposition}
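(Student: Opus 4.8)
The plan is to specialize the conditional scheme of Theorem \ref{AT1} to $X_N=\Tr(Q)$, building the comparison kernel from a near-best $M$-term trigonometric approximant of $\D_Q$ and verifying Condition D through Theorem \ref{AT2}; the conclusion is then precisely the assertion already recorded in (\ref{A8})--(\ref{A9}). First I would choose $\La\subset\Z^d$ with $|\La|=M$, $\La\cap Q=\emptyset$, and coefficients $c_\bk$ so that $g_M:=\sum_{\bk\in\La}c_\bk e^{i(\bk,\cdot)}$ realizes (\ref{A7}), i.e.\ $\|\D_Q-g_M\|_1\le 2\sigma_M^-(\D_Q)_1$. Setting $\cW_M(\bx,\by):=g_M(\bx-\by)=\sum_{\bk\in\La}c_\bk e^{i(\bk,\bx)}e^{-i(\bk,\by)}$ gives an $M$-term bilinear kernel, and one checks $\cW_M\in\cB_M(\Tr(Q)^\perp)$: for $f\in\Tr(Q)$ the integral $\int_\Omega g_M(\bx-\by)f(\by)\,d\mu$ equals $\sum_{\bk\in\La}c_\bk e^{i(\bk,\bx)}\hat f(\bk)$, which vanishes because $\hat f$ is supported on $Q$ and $\La\cap Q=\emptyset$. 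Since the $L_1(\mu)$ norm is invariant under the measure-preserving change $\by\mapsto\bx-\by$, the kernel $\cK(\bx,\by):=\D_Q(\bx-\by)-g_M(\bx-\by)$ satisfies $\sup_{\bx}\|\cK(\bx,\cdot)\|_1=\|\D_Q-g_M\|_1\le 2\sigma_M^-(\D_Q)_1$.

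Next I would bound the dimension of the auxiliary space $Y_S$ from the proof of Theorem \ref{AT1}. Taking $\{e^{i(\bk,\cdot)}\}_{\bk\in Q}$ as the orthonormal basis of $\Tr(Q)$ and the second factors $e^{-i(\bk,\cdot)}$, $\bk\in\La$, of $\cW_M$, every generator $v_j^*(\by)f(\by)$ or $\bar u_i(\by)f(\by)$ with $f\in\Tr(Q)$ is a trigonometric polynomial with frequencies in $(Q-\La)\cup(Q-Q)$, whence $Y_S\subset\Tr((Q-\La)\cup(Q-Q))$ and $S=\dim Y_S\le NM+N^2=N(N+M)$. Applying Theorem \ref{AT2} with $p=1$, $\va=1/2$ to this $Y_S$ produces points $\{\xi^\nu\}_{\nu=1}^m$ and positive weights $\{\la_\nu\}$ satisfying Condition D, with $m\le C S(\log S)^3\le C\,N(N+M)\,(\log(N(N+M)))^3$. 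Since $M\ge N$ forces $N+M\le 2M$ and $N(N+M)\le 2M^2$, the crude estimate $\log(N(N+M))\le C\log M$ collapses this to $m\le C(d)\,MN(\log M)^3$, which is exactly the hypothesis of the Proposition. If $1\notin\Tr(Q)$ (that is, $0\notin Q$), I would first adjoin $0$ to $Q$, as permitted by Theorem \ref{AT1}; this changes $N$ by one and affects none of the orders.

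Finally I would run the estimate in the proof of Theorem \ref{AT1} verbatim, but feed in the explicit control of $\|\cK(\bx,\cdot)\|_1$ rather than routing through $\sigma_M^c$. Fixing $\bx$ and using $\cK(\bx,\cdot)f(\cdot)\in Y_S$ in the left inequality of (\ref{A2}) together with $\cK(\bx,\cdot)\in Y_S$ (here the assumption $1\in\Tr(Q)$ enters) in the right one gives $|f(\bx)|\le 2\sum_\nu\la_\nu|\cK(\bx,\xi^\nu)f(\xi^\nu)|\le 3\bigl(\max_\nu|f(\xi^\nu)|\bigr)\|\cK(\bx,\cdot)\|_1$. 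Combining with $\sup_\bx\|\cK(\bx,\cdot)\|_1\le 2\sigma_M^-(\D_Q)_1$ yields $\|f\|_\infty\le 6\sigma_M^-(\D_Q)_1\max_\nu|f(\xi^\nu)|$ for every $f\in\Tr(Q)$, which by the definition of $D(Q,m,d)$ is the claimed bound. The only delicate point---and the one place where the constant could slip from $6$ to $12$---is to spend the factor-$2$ slack only once: the trigonometric kernel $\cW_M$ must simultaneously carry just $M$ terms (to control $\dim Y_S$) and be $L_1$-close to $\D_Q$, so one bounds $\|\cK(\bx,\cdot)\|_1$ directly by $2\sigma_M^-$ instead of first passing to $\sigma_M^c(\D_{\Tr(Q)})_{(\infty,1)}$ and then invoking the slack of an infimum a second time.
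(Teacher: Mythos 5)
Your proposal follows exactly the paper's route: the paper proves Proposition \ref{BP1} by taking $g_M$ as in (\ref{A7}), feeding the convolution kernel $\cK(\bx,\by)=(\D_Q-g_M)(\bx-\by)$ through the argument of Theorem \ref{AT1}, and invoking (\ref{A3}) to obtain the bound (\ref{A9}) on $m$. Your dimension count $S\le N(N+M)$, the use of Theorem \ref{AT2} with $p=1$, $\va=1/2$, the reduction of (\ref{A9}) to $m\le C(d)MN(\log M)^3$ via $M\ge N$, and your observation that one must bound $\sup_{\bx}\|\cK(\bx,\cdot)\|_1\le 2\sigma_M^-(\D_Q)_1$ directly (rather than route through $\sigma_M^c$ and spend the factor-$2$ slack of an infimum twice) are precisely what the paper's terse derivation of (\ref{A8})--(\ref{A9}) compresses.

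The one step that does not work as written is your treatment of the case $0\notin Q$. Adjoining $0$ replaces the Dirichlet kernel $\D_Q$ by $\D_{Q'}=1+\D_Q$, where $Q':=Q\cup\{0\}$, and the orthogonality constraint defining $\cB_M\bigl(\Tr(Q')^\perp\bigr)$ now forces the approximant to avoid the frequency $0$ as well. Consequently, what your argument yields is $D(Q,m,d)\le D(Q',m,d)\le 6\sigma_M^-(\D_{Q'})_1$, and $\sigma_M^-(\D_{Q'})_1$ is a genuinely different quantity from $\sigma_M^-(\D_Q)_1$: every admissible approximant $h$ of $\D_{Q'}$ has $\hat h(0)=0$, so the constant term alone costs $\|\D_{Q'}-h\|_1\ge 1$, and while one can show $\sigma_M^-(\D_{Q'})_1$ is at most a constant multiple of $\sigma_M^-(\D_Q)_1$ (add back the constant coefficient of a near-minimizer for $\D_Q$), converting your bound this way degrades the stated constant $6$. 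The clean fix is translation invariance: for any $\bk_0\in Q$, multiplication by the unimodular character $e^{-i(\bk_0,\bx)}$ is a bijection of $\Tr(Q)$ onto $\Tr(Q-\bk_0)$ preserving $|f(\bx)|$ pointwise, and a bijection of admissible $M$-term approximants of $\D_Q$ onto those of $\D_{Q-\bk_0}$ preserving $L_1$ errors; hence $D(Q,m,d)=D(Q-\bk_0,m,d)$ and $\sigma_M^-(\D_Q)_1=\sigma_M^-(\D_{Q-\bk_0})_1$, so you may assume $0\in Q$ from the outset at no cost. Alternatively, drop the assumption $1\in\Tr(Q)$ altogether and enlarge $Y_S$ by $\Tr\bigl((-Q)\cup(-\La)\bigr)$, which is where every section $\cK(\bx,\cdot)$ lives; this adds at most $N+M$ to $S$ and changes none of the orders.
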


In particular, inequality (\ref{A9}) and Theorem \ref{IT2} imply the following statement for the lacunary set $\La_n$ (see Theorem \ref{IT2} for its definition). Suppose that
$\sigma_M^-(\D_{\La_n})_1 \le C_0$. Then $M\ge e^{C(b,C_0)n}$ with a positive constant $C(b,C_0)$.

We now make some comments on the quantities $D(N,m)$ introduced and discussed in Section \ref{Int}. Theorem \ref{IT1} shows that if for a given $Q\subset \Z^d$ we are satisfied with the bound
$$
\|f\|_\infty   \le C_1|Q|^{1/2} \|f\|_{\xi(m)},\quad f\in \Tr(Q),
$$
then we can find a good set $\xi(m)$ with $m$ of the order $|Q|$. On the example of $Q=\La_n$ (see Theorem \ref{IT2}) we now discuss the following problem. For a given $Q\subset \Z^d$ and $\alpha \in (0,1/2)$, what is the minimal number of points $m$ needed for 
$$
\|f\|_\infty   \le C_0|Q|^{\alpha} \|f\|_{\xi(m)},\quad f\in \Tr(Q)?
$$
Theorem \ref{IT2} with $L=C_0n^{\al}$ gives a lower bound
\be\label{B5}
m \ge \frac{|\Lambda_n|}{e}e^{Cn^{1-2\al}/C_0^2}.
\ee
We now prove an upper bound for $m$. We will use relations (\ref{A8}) and (\ref{A9}). Thus, we need to approximate in the $L_1$ norm the $\D_{\La_n}$. First, we note that in the case $b\ge 3$ the Riesz product
$$
R_{\La_n}(x) := \prod_{j=1}^n (1+\cos(k_jx))
$$
can be used to prove the bound
\be\label{B6}
\sigma_M^-(\D_{\La_n})_1 \le C_2\quad \text{with} \quad M\le 2^{n+1}.
\ee
In the case $b>1$ by splitting $\La_n$ into lacunary sets with big ratio we obtain from (\ref{B6})  \be\label{B7}
\sigma_M^-(\D_{\La_n})_1 \le C_2(b)\quad \text{with} \quad M\le 2^{n+1}.
\ee
Next, for a given $\al\in (0,1/2)$ define $\nu:=[n^{1-\al}]$ and consider blocks
$$
\La^s_n :=\{k_j\}_{j=(s-1)\nu+1}^{s\nu},\quad s=1,\dots,S, \quad S=[n/\nu]+1.
$$
Then, using (\ref{B7}) with $n=\nu$, we obtain for $M=S2^{\nu+1}$
$$
\sigma_M^-(\D_{\La_n})_1 \le \sum_{s=1}^S \sigma_{2^{\nu+1}}^-(\D_{\La^s_n})_1 \le SC_2(b).
$$
This proves that there exists a set $\xi(m)$ with $m$ of order $n^{3-2\al}2^{n^{1-\al}}$ such that
$$
\|f\|_\infty   \le C(b)n^{\alpha} \|f\|_{\xi(m)},\quad f\in \Tr(\La_n).
$$

We now prove an upper bound for a general set $Q$. 

\begin{Theorem}\label{BT1}  For any $d\in\N$ and any $Q\subset \Z^d$ we have for any natural number $s\in [|Q|^{1/2},|Q|]$
$$
\sigma_M^-(\D_Q)_1 \le (e(1+|Q|/s))^{1/2} \quad \text{provided} \quad M\ge |Q|^2 e^{2s}(1+|Q|/s)^{2s}
$$
and in a special case $s=|Q|$ we have a slightly better bound
$$
\sigma_M^-(\D_Q)_1 \le 2 \quad \text{provided} \quad M\ge 2^{4|Q|}.
$$
\end{Theorem}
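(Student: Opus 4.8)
The plan is to produce, for a prescribed budget, a single trigonometric polynomial $\V$ with $\hat\V(\bk)=1$ for all $\bk\in Q$ and at most $M$ nonzero Fourier coefficients at frequencies outside $Q$. Since $\D_Q-\V$ then has vanishing coefficients on $Q$ and at most $M$ terms off $Q$, it is admissible in the definition of $\sigma_M^-$, and $\|\D_Q-(\D_Q-\V)\|_1=\|\V\|_1$ gives $\sigma_M^-(\D_Q)_1\le\|\V\|_1$. So everything reduces to constructing an economical generalized de la Vall\'ee Poussin kernel for an arbitrary spectrum $Q$. It helps to recall that $\D_Q$ is the orthogonal projection onto $\Tr(Q)$ of the Dirac mass at the origin, so a good $\V$ is just a near-identity kernel whose coefficients are pinned to $1$ on $Q$: its $L_1$ norm measures how spike-like it must be, and its number of terms measures how spread out $Q$ is. The driving mechanism throughout will be nonnegativity, since for $R\ge 0$ one has $\|R\|_1=\int R\,d\mu=\hat R(\mathbf 0)$, so that bounding the $L_1$ norm amounts to reading off a single Fourier coefficient.

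I would treat the clean case $s=N=|Q|$ first, via Riesz products, as the paper's remarks on $\La_n$ suggest. If $Q=\{\bk_1,\dots,\bk_N\}$ were dissociate, the Riesz product $R(\bx)=\prod_{\bk\in Q}(1+\cos(\bk,\bx))$ is nonnegative with $\int R\,d\mu=1$, and dissociativity forces every higher-order resonance to vanish, leaving $\hat R(\bk)=\tfrac12$ for each $\bk\in Q$. Then $\V:=2R$ satisfies $\hat\V\equiv 1$ on $Q$, $\|\V\|_1=2$, and $\V$ has at most $3^N$ frequencies. The genuine difficulty is that a general $Q$ carries resonances $\sum_{j\in S}\pm\bk_j=\bk_l$ with $|S|\ge 2$, which corrupt the diagonal coefficients and destroy the exact reproduction. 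I would remove them by a dissociation device: lift each $\bk_j$ to a distinct, well-separated auxiliary label so that the lifted system is dissociate, build the Riesz product there, and descend back to $\Z^d$. The slack between $3^N$ and the claimed $2^{4N}$ is precisely the room one must pay for this lifting. This step — reproducing an arbitrary, resonance-ridden $Q$ with bounded $L_1$ norm while keeping the kernel nonnegative — is the main obstacle.

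For the general range $s\in[N^{1/2},N]$ I would interpolate between the trivial kernel $\V=\D_Q$ (no extra frequencies, $\|\D_Q\|_1\le\|\D_Q\|_2=N^{1/2}$) and the bounded construction above, using the shape of the bounds as a guide. The admissible spectrum should be the $s$-fold sumset of $Q$, whose cardinality is at most $\binom{N+s-1}{s}\le(e(1+N/s))^{s}$; the exponent $2s$ together with the prefactor $N^2$ in the threshold for $M$ then points to squaring this count and adjoining a degree-two factor built from $\D_Q\overline{\D_Q}$ (whose spectrum is the difference set $Q-Q$, of size at most $N^2$). Concretely I would form a normalized nonnegative kernel adapted to $Q$ and supported on that sumset, and estimate its coefficients on $Q$ from below by a quantity of order $(s/N)^{1/2}$ through a Cauchy--Schwarz/energy argument — the square root being the source of the $\sqrt{N/s}$ gain over the raw $L_2$ size of $\D_Q$. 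Dividing by this lower bound pins the coefficients to $1$ on $Q$ and yields $\|\V\|_1\le(e(1+N/s))^{1/2}$.

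Beyond the dissociation obstacle, the remaining work is bookkeeping: verifying that resonance-free reproduction survives the descent from the lifted torus, tracking the exact constants (the factor $e$, the prefactor $N^2$, and $2^{4N}$ versus $3^N$), and checking that the $s$-fold sumset really has at most $(e(1+N/s))^{s}$ elements so that the term count comes out as stated. Throughout, the inequality $\|\V\|_1\le\|\V\|_2$ together with orthonormality of the exponentials would control the norms, and by Remark \ref{AR1} the precise values of the constants are immaterial for the downstream discretization application.
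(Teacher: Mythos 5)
Your opening reduction (build a kernel $\V$ with $\hat\V(\bk)=1$ exactly on $Q$ and at most $M$ frequencies off $Q$, so that $\sigma_M^-(\D_Q)_1\le\|\V\|_1$) is correct and is also how the paper proceeds, and your count of the $s$-fold sumset is the one ingredient you share with the actual proof. But both of your constructions break precisely at the point you yourself flag as ``the main obstacle,'' and neither break is repairable as described. For $s=|Q|$: any ``lift to a dissociate system, build the Riesz product, descend'' scheme must descend through the group homomorphism $\phi:\Z^N\to\Z^d$ sending the $j$th coordinate vector to $\bk_j$, and the descended kernel is then literally $\prod_{j}(1+\cos(\bk_j,\bx))$, the Riesz product on the original frequencies. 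Its coefficient at $\bk\in Q$ is $\sum_{\bm\in\{-1,0,1\}^N,\ \phi(\bm)=\bk}2^{-|\supp \bm|}$, and its $L_1$ norm equals its mean value (by nonnegativity), which is the same sum over $\phi(\bm)=\mathbf 0$. Resonances in $Q$ make both quantities uncontrollable: for $Q=\{1,2,\dots,N\}$ the mean value grows exponentially in $N$, and the coefficients on $Q$ are no longer equal to each other, let alone to $1/2$. So the cost of non-dissociativity is not ``slack between $3^N$ and $2^{4N}$''; the linear relations among the $\bk_j$ are intrinsic and reappear under any homomorphic descent, destroying both the pinning and the norm bound. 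No mechanism in your proposal addresses this.

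For general $s$ there is a second, independent gap: you propose to lower-bound the coefficients of a nonnegative kernel on $Q$ by $c\sim(s/N)^{1/2}$ and divide, but division by $c$ pins the coefficients to values $\ge 1$, not $=1$, so $\D_Q-\V/c$ still has frequencies inside $Q$ and is inadmissible in $\sigma_M^-$; correcting it would require adding a polynomial supported on $Q$ whose $L_1$ norm you cannot control. The paper gets exactness for free from a sumset absorption property, which is the idea missing from your sketch: with $Q_1=Q$, $Q_{j+1}=Q_j+Q$, the kernel $f_j(\bx)=\D_{Q_{j+1}}(\bx)\D_{Q_j}(-\bx)$ has coefficient exactly $|Q_j|$ at every $\bk\in Q$, because $\bv+\bk\in Q_{j+1}$ for every $\bv\in Q_j$; Cauchy--Schwarz gives $\|f_j\|_1\le(|Q_{j+1}||Q_j|)^{1/2}$, hence $\sigma_M^-(\D_Q)_1\le(|Q_{j+1}|/|Q_j|)^{1/2}$ once $M\ge|Q_{j+1}||Q_j|$. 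The parameter $s$ then enters not through interpolation between two kernels but through a pigeonhole on growth ratios: $\prod_{j=1}^{s}|Q_{j+1}|/|Q_j|=|Q_{s+1}|/|Q|\le\binom{n+s}{s+1}n^{-1}\le e^s(1+n/s)^s$ with $n=|Q|$, so some $\nu\le s$ has $|Q_{\nu+1}|/|Q_\nu|\le e(1+n/s)$; the case $s=n$ uses the trivial bound $\binom{2n}{n+1}\le 2^{2n}$ to get the ratio $\le 4$, which settles your Riesz-product case with no dissociation argument at all. These two mechanisms --- exact reproduction via absorption, and pigeonhole on sumset growth --- are what make the theorem true, and both are absent from the proposal.
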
 
\begin{proof}   For two sets $A$ and $B$ from $\Z^d$ define $A+B:= \{a+b\,:\, a\in A,\, b\in B\}$. Denote $Q_1:=Q$ and $Q_{j+1} := Q_j+Q_1$.
Consider the function
$$
f_j(\bx) := \D_{Q_{j+1}}(\bx) \D_{Q_{j}}(-\bx).
$$
We claim that the function $f_j$ has the following representation
\be\label{B8}
f_j(\bx) =|Q_j| \D_{Q}(\bx) + \sum_{\bk\notin Q} c_\bk e^{i(\bk,\bx)}.
\ee
Indeed,
$$
f_j(\bx) = \left(\sum_{\bu\in Q_{j+1}} e^{i(\bu,\bx)}\right)\left(\sum_{\bv\in Q_{j}} e^{-i(\bv,\bx)}\right) =
\sum_{\bv\in Q_j}\sum_{\bv+\bk\in Q_{j+1}}e^{i(\bk,\bx)}.
$$
From the definition of the $Q_{j+1}$ we find that for each $\bk\in Q$ for all $\bv \in Q_j$ we have $\bv+\bk \in Q_{j+1}$. This implies representation (\ref{B8}). Therefore,
\be\label{B9}
\sigma_M^-(\D_Q)_1 \le |Q_j|^{-1}\|f_j\|_1
\ee
for any $M\ge |Q_{j+1}||Q_j|$. Clearly,
\be\label{B10}
\|f_j\|_1 \le \|\D_{Q_{j+1}}\|_2\|\D_{Q_j}\|_2 = (|Q_{j+1}||Q_j|)^{1/2}.
\ee
Therefore, for any $j\in\N$ we have for $M\ge |Q_{j+1}||Q_j|$
\be\label{B11}
\sigma_M^-(\D_Q)_1 \le (|Q_{j+1}|/|Q_j|)^{1/2}.
\ee
We now consider $j\le s$. Then, using notation $n:=|Q|$, we obtain 
\be\label{B11a}
\prod_{j=1}^s |Q_{j+1}|/|Q_j| =|Q_{s+1}|/|Q| \le \binom{n+s}{s+1}n^{-1}.
\ee
We use the known bound for the factorial
$$
(2\pi)^{1/2}r^{r+1/2}e^{-r} \le r! \le er^{r+1/2}e^{-r},
$$ 
which implies that 
\be\label{B12}
\binom{n+s}{s+1}n^{-1} \le e^s\left(1+\frac{n}{s}\right)^s.
\ee
In the case $s=n$ we use a trivial bound $\binom{n+s}{s+1} \le 2^{2n}$.
Therefore, by (\ref{B11a}) there is a $\nu\le s$ such that 
$$
|Q_{\nu+1}|/|Q_\nu| \le e(1+n/s),\quad\text{and in the case $s=n$}\quad |Q_{\nu+1}|/|Q_\nu|\le 4.
$$
This and (\ref{B11}) imply the required upper bounds provided 
$$
M\ge |Q_{s+1}||Q_{s}|,
$$
which is satisfied for $M\ge n^2 e^{2s}(1+ n/s )^{2s}$ due to (\ref{B12}) and for $M\ge 2^{4n}$ in the case  $s=n$. 

\end{proof} 

\begin{Corollary}\label{BC1} There exist two positive absolute constants $C_1$ and $C_2$ such that for any $Q\subset \Z^d$ with $|Q|=n$ we have for $\alpha\in (0,1/2)$ 
$$
\sigma_M^-(\D_Q)_1 \le C_1n^{\alpha} \quad \text{provided} \quad M\ge e^{C_2n^{1-2\alpha}\ln n} .
$$
\end{Corollary}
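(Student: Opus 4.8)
The plan is to read Corollary \ref{BC1} off Theorem \ref{BT1} by optimizing over the free parameter $s$. Write $n:=|Q|$. Theorem \ref{BT1} delivers $\sigma_M^-(\D_Q)_1\le (e(1+n/s))^{1/2}$ as soon as $M\ge n^2 e^{2s}(1+n/s)^{2s}$, and since $(e(1+n/s))^{1/2}\le C_1 n^\alpha$ amounts to $n/s\lesssim n^{2\alpha}$, the natural choice is $s:=\lceil n^{1-2\alpha}\rceil$. Because $\alpha\in(0,1/2)$ gives $1-2\alpha\in(0,1)$, we have $1\le s\le n$. For $\alpha\in(0,1/4]$ this $s$ also satisfies $s\ge n^{1/2}$, so Theorem \ref{BT1} applies on the nose; for $\alpha\in(1/4,1/2)$ one has $s<n^{1/2}$, and here I would invoke the estimate actually established inside the proof of Theorem \ref{BT1}, namely (\ref{B11}) together with (\ref{B12}), which hold verbatim for every integer $s\in[1,n]$ and not merely for $s\ge n^{1/2}$ (the lower restriction in the statement is only the regime in which the clean standalone bound is quoted).

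With $s\ge n^{1-2\alpha}$ we get $n/s\le n^{2\alpha}$, and using $1+n^{2\alpha}\le 2n^{2\alpha}$ for $n\ge1$,
$$
\sigma_M^-(\D_Q)_1 \le (e(1+n/s))^{1/2}\le (2e)^{1/2}\, n^{\alpha},
$$
so $C_1=(2e)^{1/2}$ works and is absolute (the bounds in Theorem \ref{BT1} carry no dependence on $d$). It then remains to verify that the hypothesis $M\ge e^{C_2 n^{1-2\alpha}\ln n}$ forces the admissibility threshold $M\ge n^2 e^{2s}(1+n/s)^{2s}$ from Theorem \ref{BT1}.

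For this I would bound the logarithm of the threshold,
$$
\ln\!\left(n^2 e^{2s}(1+n/s)^{2s}\right)=2\ln n+2s+2s\ln(1+n/s),
$$
using $s\le n^{1-2\alpha}+1$ and $\ln(1+n/s)\le \ln(1+n^{2\alpha})\le \ln 2+2\alpha\ln n\le \ln 2+\ln n$. Since $1-2\alpha>0$ yields $n^{1-2\alpha}\ge1$, each of the three summands is at most an absolute multiple of $n^{1-2\alpha}\ln n$, so the whole expression is $\le C_2 n^{1-2\alpha}\ln n$ for an absolute $C_2$ and all $n\ge 2$, the finitely many small $n$ being absorbed into the constants. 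Hence $M\ge e^{C_2 n^{1-2\alpha}\ln n}$ implies $M\ge n^2 e^{2s}(1+n/s)^{2s}$, and the displayed bound on $\sigma_M^-(\D_Q)_1$ applies.

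The computations are routine; the one genuinely delicate point is that the threshold estimate $C_2 n^{1-2\alpha}\ln n$ must hold with $C_2$ \emph{independent of} $\alpha$. This forces me to track how the factor $2\alpha\ln n$ arising from $\ln(1+n/s)$ interacts with the prefactor $s\sim n^{1-2\alpha}$; because $2\alpha<1$ that factor is harmlessly dominated by $\ln n$, which is exactly what makes the constant uniform. The secondary subtlety is justifying $s<n^{1/2}$ when $\alpha>1/4$, which I handle by appealing to the internal inequalities (\ref{B11})--(\ref{B12}) rather than to the packaged range of Theorem \ref{BT1}.
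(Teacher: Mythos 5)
Your proof is correct and is essentially the derivation the paper intends: Corollary \ref{BC1} is stated without separate proof as an immediate consequence of Theorem \ref{BT1}, obtained exactly as you do by taking $s\asymp n^{1-2\alpha}$ and checking that the logarithm of the admissibility threshold $n^2e^{2s}(1+n/s)^{2s}$ is bounded by an absolute multiple of $n^{1-2\alpha}\ln n$, uniformly in $\alpha\in(0,1/2)$. Your extra care in the regime $\alpha\in(1/4,1/2)$, where $s<n^{1/2}$ falls outside the stated range of Theorem \ref{BT1} but inside the range where its internal estimates (\ref{B11})--(\ref{B12}) remain valid (as one checks directly from the factorial bounds), is a legitimate and indeed necessary reading of that proof rather than a departure from it.
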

Note that in the case $\alpha =1/2$ we have $\|\D_Q\|_1\le \|\D_Q\|_2 = |Q|^{1/2}$ and the case $\alpha =0$ is covered by Theorem \ref{BT1}.
\begin{Remark}\label{BR1} There exist two positive absolute constants $c$ and $n_0$ such that for any $\alpha\in [0,1/2)$ we cannot 
replace the condition $M\ge e^{C_2n^{1-2\alpha}\ln n}$ in Corollary \ref{BC1} by the condition 
 $M\ge e^{cn^{1-2\alpha}}$ for $n\ge n_0$.
 \end{Remark}
 \begin{proof} Corollary \ref{BC1} holds for all $Q$ with $|Q|=n$. Therefore, for the proof of Remark \ref{BR1} it is sufficient to consider a particular $Q$.  We take $Q=\Lambda_n$  with $b=3$, defined in Theorem \ref{IT2}, with $n\ge n_0$, where $n_0$ will be specified later. Recall that Theorem \ref{IT2} shows that inequality $D(\Lambda_n,m)\le L$ implies 
 that $m\ge (n/e)e^{Cn/L^2}$ with a positive absolute constant $C$. Suppose $M$ is such that
 $$
 \sigma_M^-(\D_{\Lambda_n})_1 \le C_1n^{\alpha}.
 $$
 Then by Proposition \ref{BP1} we have 
 $$
 D(\Lambda_n,m) \le 6C_1n^{\alpha}\quad \text{for all}\quad m\ge C(1)Mn(\log M)^3.
 $$
 Therefore, we must have
 \be\label{B13}
 C(1)Mn(\log M)^3 \ge (n/e)e^{Cn/L^2}-1\quad \text{with}\quad L=6C_1n^{\alpha}.
 \ee
 Clearly, there are a small enough absolute constant $c>0$ and a large enough $n_0$ such that (\ref{B13}) does not hold for $n\ge n_0$
 if $M\le e^{cn^{1-2\alpha}}+1$. This completes the proof. 
 
 \end{proof}
 
 We now make an observation on the construction of functions $f_j$ in the proof of Theorem \ref{BT1}.
 Remark \ref{BR1} shows that the corresponding construction provides almost optimal result for general sets $Q$. We point out that in a special case, when $d=1$ and $Q=[-n,n]$ we have 
 $$
 f_2(x) =\D_{[-n,n]}(x)\D_{[-2n,2n]}(x) = (2n+1)\V_{n,3n+1}(x),
 $$
 where
 $$
 \V_{n,3n+1}(x) := (2n+1)^{-1}\sum_{k=n}^{3n}\D_{[-k,k]}(x)
 $$
is the de la Vall{\'e}e Poussin kernel. 

It is clear from the definition of functions $f_j$ that the nonzero coefficients $c_\bk$ in the representation (\ref{B8}) satisfy the following inequalities $1\le c_\bk \le |Q_j|$ for all $\bk$.

\section{Subspaces of trigonometric polynomials generated by random spectrum}
\label{C}

In this section we consider subspaces $\Tr(Q)$ with $Q$ being a randomly chosen subset of 
$[1,K]\cap \N$. We begin with some notations. It will be convenient for us to consider the following functions 
\be\label{C1}
\D_Q(x) := \sum_{k\in Q} e^{ikx} =\D_Q^c(x) +i\D_Q^s(x)
\ee
where
$$
\D_Q^c(x) := \sum_{k\in Q} \cos(kx),\qquad \D_Q^s(x) := \sum_{k\in Q} \sin(kx).
$$
For $Q\subset [1,K]\cap \N$ denote $Q^- := ([1,K]\setminus Q)\cap \N$. Let $a\in (0,1)$. Define the functions
$$
 \D_{Q,a}^c :=  \D_Q^c -  a\D_{Q^-}^c,\qquad  \D_{Q,a}^s :=  \D_Q^s -  a\D_{Q^-}^s.
 $$
 Using a brief notation $\D_K$ for $\D_{[1,K]\cap \N}$ we easily obtain that
 \be\label{C2}
 \D_Q^c = (1+a)^{-1}(\D_{Q,a}^c+ a\D_K^c), \quad \D_Q^s = (1+a)^{-1}(\D_{Q,a}^s+a\D_K^s).
 \ee
 
 We now proceed to the random construction. This construction is based on independent random variables with certain properties. It will be convenient for us to present the construction for specific random variables. Let $p\in [0,1/4]$ and let $\eta_p$ be a real function defined on $[0,1]$ as follows: $\eta_p(w)=1$ for $w\in [0,p]$ and $\eta_p(w) = -a(p)$, with $a(p):= p(1-p)^{-1}$, for $w\in (p,1]$. We interpret $\eta_p$ as a random variable defined on $[0,1]$ equipped with the Lebesgue measure $\la$. Then
 \be\label{C3}
 \bE(\eta_p) = 0,\qquad \sigma^2(\eta_p) = \bE(\eta^2) = a(p).
 \ee
 For a $\bw = (w_1,\dots,w_K) \in [0,1]^K$ define
 \be\label{C4}
 Q(\bw) := \{k\,:\, \eta_p(w_k)=1\}.
 \ee
 Then cardinality $|Q(\bw)|$ of the set $Q(\bw)$ is a random variable defined on $[0,1]^K$ equipped with the Lebesgue measure $\la^K := \la\times\cdots\times\la$. Clearly,
 \be\label{C5}
 |Q(\bw)| = \sum_{k=1}^K \chi_{[0,p]}(w_k)
 \ee
 where $\chi_A(w)$ is the characteristic function of the set $A$. Then
 \be\label{C6}
 \bE(|Q(\bw)|) = pK,\quad \bE((|Q(\bw)|-pK)^2) = p(1-p)K.
 \ee
 By Markov's inequality we obtain from (\ref{C6}) that
 \be\label{C7}
 \la^K(\{\bw\,:\, pK/2\le |Q(\bw)| \le 3pK/2\}) \ge 1- \frac{4}{pK}.
 \ee
 We now define two random functions
 $$
 \D^c(x,p,\bw) := \sum_{k=1}^K \eta_p(w_k)\cos(kx), \quad  \D^s(x,p,\bw) := \sum_{k=1}^K \eta_p(w_k)\sin(kx)
 $$
 with $\bw \in [0,1]^K$. 
 \begin{Lemma}\label{CL1} Assume that $N\in \N$ and $K\in \N$ satisfy the inequalities \newline $\log K \le N \le K/4$. Set $p=N/K$. There exist a positive absolute constant $C_0$ and a set $W_0\subset [0,1]^K$ such that 
 $\la^K(W_0) \ge 1-(4K)^{-1}$ and for all $\bw \in W_0$ we have
 \be\label{C7'}
 \|\D^c(x,p,\bw)\|_\infty \le C_0(N\log K)^{1/2},\quad \|\D^s(x,p,\bw)\|_\infty \le C_0(N\log K)^{1/2}.
 \ee
 \end{Lemma}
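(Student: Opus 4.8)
The plan is to treat $\D^c(\cdot,p,\bw)$ and $\D^s(\cdot,p,\bw)$ as random trigonometric polynomials of degree at most $K$ and to combine a pointwise concentration (Bernstein) inequality with a discretization of the uniform norm. First I would record the elementary properties of the multipliers: for $p\le 1/4$ we have $a(p)=p/(1-p)\le 4p/3$, so $|\eta_p|\le 1$, and by (\ref{C3}) each $\eta_p(w_k)$ is mean-zero with variance $a(p)\le 4N/(3K)$. Hence, for a fixed $x$, the sum $\D^c(x,p,\bw)=\sum_{k=1}^K \eta_p(w_k)\cos(kx)$ is a sum of $K$ independent mean-zero random variables, each bounded in absolute value by $1$, whose variances add up to $a(p)\sum_{k=1}^K\cos^2(kx)\le a(p)K\le 4N/3$. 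The same bounds hold verbatim for $\D^s(x,p,\bw)$.

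Second, I would apply Bernstein's inequality to these sums. With $V:=4N/3$ and $t:=C_0(N\log K)^{1/2}$, Bernstein gives a bound of the form $2\exp\!\big(-c\,t^2/(V+t)\big)$; the hypothesis $N\ge \log K$ forces $t\le C_0 N$, so the exponent is at least $c' C_0^2(\log K)/(1+C_0)$. Choosing the absolute constant $C_0$ large enough makes this exceed $5\log K$, so that for each fixed $x$
\[
\la^K\big(\{\bw:|\D^c(x,p,\bw)|\ge t\}\big)\le 2K^{-5},
\]
and likewise for $\D^s$.

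Third comes the passage from pointwise to uniform control, which I expect to be the main point of the argument. Since $\D^c(\cdot,p,\bw)$ is, for every fixed $\bw$, a trigonometric polynomial of degree at most $K$, Bernstein's inequality for the derivative gives $\|(\D^c)'\|_\infty\le K\|\D^c\|_\infty$. Taking an equally spaced net $\{x_\ell\}_{\ell=1}^{L}\subset[0,2\pi)$ with $L=\lceil 2\pi K\rceil$ points, so that every $x$ lies within $\pi/L\le 1/(2K)$ of some $x_\ell$, yields the deterministic inequality $\|\D^c\|_\infty\le 2\max_\ell|\D^c(x_\ell)|$, valid for every realization $\bw$, and similarly for $\D^s$. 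Consequently the event $\{\|\D^c\|_\infty\ge 2t\}$ is contained in $\bigcup_\ell\{|\D^c(x_\ell)|\ge t\}$, which reduces the uniform estimate to the finitely many pointwise estimates already obtained.

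Finally I would take a union bound over the $L\le 7K$ net points and over the two functions: the failure probability is at most $2\cdot 7K\cdot 2K^{-5}=28K^{-4}$, which is below $(4K)^{-1}$ once $K^3\ge 112$. Here the admissibility condition $\log K\le N\le K/4$ (which already forces $\log K\le K/4$, hence $K$ larger than an absolute constant) guarantees this inequality. Setting $W_0$ to be the complement of the failure event and renaming $2C_0$ as $C_0$ then gives a set of measure at least $1-(4K)^{-1}$ on which (\ref{C7'}) holds. The only genuine obstacle is the interplay in the Bernstein step between the exponential decay rate and the union bound over roughly $K$ net points: this is exactly where the hypothesis $N\ge\log K$ is used, and everything else is a routine variance computation together with the standard net argument.
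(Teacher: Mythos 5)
Your proof is correct and follows essentially the same route as the paper's: a pointwise Bernstein concentration bound at roughly $K$ equally spaced points (with the hypothesis $N\ge\log K$ used exactly as you use it, to make the exponent a large multiple of $\log K$), a union bound over the net and over the cosine and sine parts, and then a passage from the discrete maximum to the uniform norm. The only difference is presentational: the paper cites the known Marcinkiewicz--Zygmund-type discretization inequality $\|f\|_\infty\le C_1\max_l|f(x^l)|$ for $f\in\Tr([-K,K]\cap\N)$ at the points $x^l=l\pi/(2K)$, whereas you re-derive that net inequality inline from Bernstein's inequality for derivatives of trigonometric polynomials.
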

 \begin{proof} Define $x^l:= \frac{l\pi}{2K}$, $l=1,\dots,4K$. Then, it is known (see, for instance,
 \cite{VTbookMA}, p.102) that for any $f\in \Tr([-K,K]\cap\N)$ we have
 \be\label{C8}
 \|f\|_\infty \le C_1\max_l|f(x^l)|.
 \ee
 For each $l$ by the Bernstein concentration measure inequality (see, for instance, \cite{VTbook}, p.198) we obtain the following bound for the function $\D^c(x^l,p,\bw)$
 \be\label{C9} 
 \la^K(\{\bw\,:\, |\D^c(x^l,p,\bw)|\ge y\}) \le 2\exp\left(-\frac{y^2}{2(a(p)K+y/3)}\right)
 \ee
 and the same bound for the function $\D^s(x^l,p,\bw)$. Then, for $y\ge C_2(N\log K)^{1/2}$ with large enough constant $C_2$ the right side in (\ref{C9}) is less than $(4K)^{-2}/2$. Therefore, for such $y$ we have
 \be\label{C10}
  \la^K(\{\bw\,:\, \max_l\max\{|\D^c(x^l,p,\bw)|,|\D^s(x^l,p,\bw)|\}\ge y\}) \le (4K)^{-1}.
  \ee
 Combining (\ref{C10}) with (\ref{C8}) we complete the proof of Lemma \ref{CL1}.
 \end{proof}
 
 \begin{Theorem}\label{CT1} Assume that $N\in \N$ and $K\in \N$ satisfy the inequalities $\log K \le N \le K/4$ and $N\ge 16$. Set $p=N/K$. There exist a positive absolute constant $c$ and a set $W_1\subset [0,1]^K$ such that $\la^K(W_1) \ge 1/2$ and for each $\bw \in W_1$ we have $N/2\le |Q(\bw)| \le 3N/2$ and 
 \be\label{C10a}
 D(Q(\bw), m) \ge c(N/\log K)^{1/2}
 \ee
 for $ m\le K(\log K)^{1/2}N^{-1/2}$.
 \end{Theorem}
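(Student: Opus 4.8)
The plan is to exhibit, for every admissible point set $\xi(m)$, a single polynomial in $\Tr(Q(\bw))$ whose sup-norm exceeds its sampled norm by the required factor; since $D(Q,m)$ is an infimum over $\xi(m)$ of a supremum over $f$, a lower bound on the supremum valid for each fixed $\xi(m)$ yields the claim. First I would fix $W_1 := W_0 \cap \{\bw : N/2 \le |Q(\bw)| \le 3N/2\}$, where $W_0$ is the set from Lemma \ref{CL1}. By (\ref{C7}) the second set has measure at least $1-4/(pK) = 1-4/N \ge 3/4$ (as $N \ge 16$), and $\la^K(W_0) \ge 1-(4K)^{-1}$, so $\la^K(W_1) \ge 1/2$ after using $K \ge 4N \ge 64$. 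For $\bw \in W_1$ write $Q=Q(\bw)$, $n=|Q| \in [N/2, 3N/2]$.

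The candidate polynomial will be a translate of the Dirichlet kernel $\D_Q(x) = \sum_{k\in Q} e^{ikx} \in \Tr(Q)$. Its virtue is a sharp dichotomy: at the origin $|\D_Q(0)| = n \ge N/2$, so $\|\D_Q\|_\infty \ge N/2$, while away from the peak $\D_Q$ is small. To quantify the latter I would use (\ref{C2}) in the form $\D_Q = (1+a)^{-1}(\D_{Q,a} + a\D_K)$, where $\D_{Q,a} = \D^c(\cdot,p,\bw) + i\D^s(\cdot,p,\bw)$ (this identity holds because $\eta_p$ equals $1$ on $Q$ and $-a$ on $Q^-$). Lemma \ref{CL1} gives $\|\D_{Q,a}\|_\infty \le \sqrt2\,C_0(N\log K)^{1/2}$, while the elementary estimate $|\D_K(x)| \le \pi/|x|$ for $0<|x|\le\pi$ together with $a \le 4N/(3K)$ controls the second term. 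Setting $\delta := \pi N^{1/2}/(2K(\log K)^{1/2})$, I obtain $|\D_Q(x)| \le A := (\sqrt2\,C_0 + 8/3)(N\log K)^{1/2}$ for every $x$ at distance at least $\delta$ from $2\pi\Z$.

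Then I would produce a good translate by a measure-counting argument. For a set $\xi=\{\xi^\nu\}_{\nu=1}^m$ the ``bad'' shifts $t$ — those for which some $\xi^\nu-t$ lands within $\delta$ of $2\pi\Z$ — form a set of measure at most $2m\delta$. The hypothesis $m \le K(\log K)^{1/2}N^{-1/2}$ gives $2m\delta \le \pi < 2\pi$, so a good shift $t^*$ exists; this is exactly where the stated threshold on $m$ is used, and is the crux of the argument, the threshold being calibrated so that $m$ is comparable to $1/\delta$, i.e. to the reciprocal peak width. For such $t^*$ the function $f(x) := \D_Q(x-t^*) \in \Tr(Q)$ satisfies $\|f\|_\infty = \|\D_Q\|_\infty \ge N/2$ and $\max_\nu |f(\xi^\nu)| = \max_\nu|\D_Q(\xi^\nu-t^*)| \le A$, whence
$$
\frac{\|f\|_\infty}{\|f\|_{\xi(m)}} \ge \frac{N/2}{(\sqrt2\,C_0+8/3)(N\log K)^{1/2}} = c\,(N/\log K)^{1/2}
$$
with $c := (2(\sqrt2\,C_0+8/3))^{-1}$ absolute. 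Taking the infimum over $\xi(m)$ yields (\ref{C10a}).

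I expect the only delicate points to be bookkeeping rather than conceptual: verifying the identity $\D_{Q,a} = \D^c + i\D^s$ so that Lemma \ref{CL1} applies, and tuning $\delta$ (equivalently, the absolute constants) so that simultaneously the off-peak bound holds and the measure of bad shifts stays strictly below $2\pi$. The tension between ``$\delta$ large enough to tame $a\D_K$'' and ``$\delta$ small enough to leave room for a shift'' is precisely what the assumption $\log K \le N \le K/4$ resolves, and inserting the factor $1/2$ into $\delta$ supplies the needed slack.
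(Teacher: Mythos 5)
Your proof is correct and takes essentially the same route as the paper's: the same set $W_1$ built from (\ref{C7}) and Lemma \ref{CL1}, the same test function (a translated Dirichlet kernel $\D_{Q(\bw)}(x-t^*)$), the same decomposition (\ref{C2}) with Lemma \ref{CL1} controlling $\D_{Q,a}$ and the bound $|\D_K(x)|\le C|x|^{-1}$ controlling the remainder, and the same balancing of the two terms via the hypothesis $m\le K(\log K)^{1/2}N^{-1/2}$. The only (cosmetic) difference is how the translate is chosen: the paper centers the kernel at the midpoint of the largest gap between consecutive sample points, which by pigeonhole is at distance at least $\pi/m$ from every $\xi^j$, whereas you find a good shift by bounding the measure of bad shifts; the two selection arguments are interchangeable.
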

 
 \begin{proof} We specify $p=N/K$ and assume that $N\ge 16$. Then by (\ref{C7}) and Lemma \ref{CL1} we find a set $W_1\subset [0,1]^K$ such that $\la^K(W_1) \ge 1/2$ and for $\bw \in W_1$ we have
 \be\label{C11}
   N/2\le |Q(\bw)| \le 3N/2 
 \ee
 and inequalities (\ref{C7'}) are satisfied. Let $\xi(m)=\{\xi^j\}_{j=1}^m\subset \T$ be given. Set $\xi^{m+1} := \xi^1 +2\pi$ and denote by $x^*$ the center of the largest interval $[\xi^j,\xi^{j+1}]$, $j=1,\dots,m$. Then $|\xi^j-x^*|\ge \pi/m$, $j=1,\dots,m+1$. 
 Take any $\bw\in W_1$ and define a function $f\in \Tr(Q(\bw))$ as follows
 $$
 f(x) := \D_{Q(\bw)}(x-x^*).
 $$
 Then on one hand 
 \be\label{C13}
 f(x^*) = |Q(\bw)| \ge N/2
 \ee
 and on the other hand by (\ref{C2}) and (\ref{C7'}) for any $j$ we have
 \be\label{C14}
 |f(\xi^j)| \le (1+a(p))^{-1}(\|\D^c(x,p,\bw)\|_\infty+\|\D^s(x,p,\bw)\|_\infty +a(p)|\D_K(\xi^j-x^*)|).
 \ee
 Using well known inequality $|\D_K(x)| \le C_3|x|^{-1}$, $|x|\le\pi$, we obtain from (\ref{C14})
 \be\label{C15}
 |f(\xi^j)| \le C_4((N\log K)^{1/2} + Nm/K).
 \ee
 Using our assumption $m\le K(\log K)^{1/2}N^{-1/2}$ we get from (\ref{C15})
  \be\label{C16}
 |f(\xi^j)| \le C_5(N\log K)^{1/2},\quad j=1,\dots,m.
 \ee
 Combining (\ref{C13}) and (\ref{C16}) we complete the proof of Theorem \ref{CT1}.
 
 \end{proof}
 
 \begin{Corollary}\label{CC1} Assume that the conditions of Theorem \ref{CT1} are satisfied. 
 Take $c_1>1$ and assume that the following additional inequality for $N$ holds
 $$
 \max(1,(c_1/c)^2) \le N/\log K \le c_2,
 $$ 
 where $c$ is a constant from (\ref{C10a}). Then for each $\bw \in W_1$ the inequality $D(Q(\bw),m) < c_1$ implies 
 $$
 m> c_2^{-1/2}K \ge c_2^{-1/2} e^{N/c_2} \ge c_2^{-1/2} e^{2|Q(\bw)|/(3c_2)}.
 $$
 \end{Corollary}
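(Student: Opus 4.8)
Corollary \ref{CC1} is an immediate consequence of Theorem \ref{CT1} together with the hypotheses relating $N$, $\log K$, and the constant $c$ from \eqref{C10a}. The plan is simply to contrapose the lower bound \eqref{C10a} and then translate the resulting lower bound on $m$ into the chain of inequalities stated in the corollary, using the sandwich assumption $\max(1,(c_1/c)^2) \le N/\log K \le c_2$ and the relation $p = N/K$.

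**The main argument.** Under the conditions of Theorem \ref{CT1}, for each $\bw \in W_1$ we have the bound $D(Q(\bw),m) \ge c(N/\log K)^{1/2}$ for all $m \le K(\log K)^{1/2}N^{-1/2}$. The hypothesis $N/\log K \ge (c_1/c)^2$ gives $c(N/\log K)^{1/2} \ge c_1$. Hence for every $m$ in the range $m \le K(\log K)^{1/2}N^{-1/2}$ we automatically get $D(Q(\bw),m) \ge c_1$. Therefore, if we are given that $D(Q(\bw),m) < c_1$, the point $m$ cannot lie in that range, which forces the strict reverse inequality $m > K(\log K)^{1/2}N^{-1/2}$.

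**From the threshold to the stated chain.** It remains to bound the threshold $K(\log K)^{1/2}N^{-1/2}$ from below and turn it into the three displayed inequalities. For the first inequality, I would use the right-hand side of the sandwich: $N/\log K \le c_2$ means $\log K \ge N/c_2$, i.e. $(\log K)^{1/2} \ge (N/c_2)^{1/2} = c_2^{-1/2}N^{1/2}$. Substituting this into the threshold yields
$$
K(\log K)^{1/2}N^{-1/2} \ge K\, c_2^{-1/2} N^{1/2} N^{-1/2} = c_2^{-1/2}K,
$$
which gives $m > c_2^{-1/2}K$. For the second inequality, the same bound $\log K \ge N/c_2$ exponentiates to $K \ge e^{N/c_2}$, so $c_2^{-1/2}K \ge c_2^{-1/2}e^{N/c_2}$. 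For the final inequality I would invoke the membership $\bw \in W_1$, which by \eqref{C11} gives $|Q(\bw)| \le 3N/2$, i.e. $N \ge \tfrac{2}{3}|Q(\bw)|$, hence $e^{N/c_2} \ge e^{2|Q(\bw)|/(3c_2)}$, completing the chain.

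**Where the work really is.** There is essentially no hard step here: the corollary is bookkeeping built on top of Theorem \ref{CT1}, and the only thing one must be careful about is checking that the sandwich hypothesis is used in both directions correctly — the lower bound $N/\log K \ge (c_1/c)^2$ to clear the threshold $c_1$ in the discretization bound, and the upper bound $N/\log K \le c_2$ to convert the $(\log K)^{1/2}$ factor into a clean lower bound of the form $c_2^{-1/2}K$. The subtlest bit of care is making sure the inequality directions stay consistent when one replaces $(\log K)^{1/2}$ by its lower estimate inside the threshold, since that estimate is what produces the exponential growth $K \ge e^{N/c_2}$ and thus the exponential-in-$|Q(\bw)|$ lower bound on $m$ that is the point of the corollary.
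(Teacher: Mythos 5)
Your proof is correct and is precisely the argument the paper intends: the corollary is stated without proof as an immediate consequence of Theorem \ref{CT1}, obtained by contraposing the bound (\ref{C10a}) via $c(N/\log K)^{1/2}\ge c_1$ and then converting the threshold $K(\log K)^{1/2}N^{-1/2}$ into the stated chain using $\log K\ge N/c_2$ and $|Q(\bw)|\le 3N/2$ from (\ref{C11}). All inequality directions and the use of both sides of the sandwich hypothesis check out.
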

 
 Note that in both Theorem \ref{CT1} and Corollary \ref{CC1} we impose the restriction $N\ge \log K$.
 This restriction can be weakened to $N\ge \gamma \log K$, $\gamma >0$, which will result in dependence of constants on $\gamma$. The reader can find a discussion of the case $N\le \gamma \log K$ in Subsection 5.2.
 
 {\bf Proof of Proposition \ref{IP1}.} We prove Proposition \ref{IP1} with $L$ instead of $N$. Assume without loss of generality that $L$ is divisible by 3 and use Theorem \ref{CT1} with $N=2L/3$. Choose $K:= [c(3N/2)^kN^{1/2}] +1$. Then by the assumption of Proposition \ref{IP1} we obtain
 $$
 m\le cL^k \le KN^{-1/2} \le K(\log K)^{1/2}N^{-1/2}
 $$ 
 what is required in Theorem \ref{CT1}. By Theorem \ref{CT1} we obtain 
 $$
 D(L,m)=D(3N/2,m) \ge D(Q(\bw),m) \ge C' \left(\frac{L}{\log L}\right)^{1/2}.
 $$
 
 The above proof of Proposition \ref{IP1} and Proposition \ref{BP1} imply the following statement.
 
 \begin{Proposition}\label{CP1} Suppose that the conditions of Theorem \ref{CT1} are satisfied with 
 $K=N^a$ where $a\in \N$, $a\ge 3$. Then there exist  positive constants $c(d,a)$  and $C(d,a)$ such that 
for all $M\le C(d,a)N^{a-2}$ we have for $\bw \in W_1$
$$
\sigma_M^-(\D_{Q(\bw)})_1 \ge c(d,a) \left(\frac{N}{\log N}\right)^{1/2}.
$$
\end{Proposition}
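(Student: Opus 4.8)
The plan is to couple the upper bound of Proposition~\ref{BP1}, which reads $D(Q(\bw),m)\le 6\sigma_M^-(\D_{Q(\bw)})_1$ once $m\ge C(d)M|Q(\bw)|(\log M)^3$ and $M\ge|Q(\bw)|$, with the lower bound $D(Q(\bw),m)\ge c(N/\log K)^{1/2}$ of Theorem~\ref{CT1}, valid for $\bw\in W_1$ and $m\le K(\log K)^{1/2}N^{-1/2}$; this is exactly the mechanism used in the proof of Proposition~\ref{IP1}, now run with the prescribed choice $K=N^a$. Since $\log K=a\log N$, the Theorem~\ref{CT1} lower bound becomes $c\,a^{-1/2}(N/\log N)^{1/2}$, which is precisely the target order.

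Before combining, I would reduce the claim to a single value of $M$. Enlarging the competing frequency set $\La$ (always keeping $\La\cap Q=\emptyset$) can only decrease the best $M$-term $L_1$-error, so $\sigma_M^-(\D_{Q(\bw)})_1$ is non-increasing in $M$; hence it suffices to prove the lower bound at the largest admissible value $M^*:=\lfloor C(d,a)N^{a-2}\rfloor$, whence it transfers to every $M\le M^*$. I would take $C(d,a)$ at least $3/2$, so that $M^*\ge 3N/2\ge|Q(\bw)|$ for $\bw\in W_1$, which legitimises the use of Proposition~\ref{BP1} at $M=M^*$.

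Now set $m:=\lceil C(d)M^*|Q(\bw)|(\log M^*)^3\rceil$. Proposition~\ref{BP1} then gives $\sigma_{M^*}^-(\D_{Q(\bw)})_1\ge \tfrac16 D(Q(\bw),m)$, while Theorem~\ref{CT1}, provided this $m$ lies below $a^{1/2}N^{a-1/2}(\log N)^{1/2}$, gives $D(Q(\bw),m)\ge c\,a^{-1/2}(N/\log N)^{1/2}$. Setting $c(d,a):=\tfrac{c}{6}a^{-1/2}$ and invoking the monotonicity reduction finishes the proof.

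The one substantial point is the window condition $m\le a^{1/2}N^{a-1/2}(\log N)^{1/2}$. Inserting $M^*\le C(d,a)N^{a-2}$ and $|Q(\bw)|\le 3N/2$, the Proposition~\ref{BP1} threshold is bounded by a $(d,a)$-constant times $N^{a-1}(\log N)^3$, whereas the Theorem~\ref{CT1} ceiling is of order $N^{a-1/2}(\log N)^{1/2}$; the gap between them is governed by $(\log N)^{5/2}/N^{1/2}$, which tends to $0$. Thus the comparison holds for all sufficiently large $N$, and I expect this $m$-window check—together with the bookkeeping that keeps $C(d,a)$ large enough for $M^*\ge|Q(\bw)|$ yet compatible with the window—to be the main (indeed essentially the only) obstacle, so that the statement holds for $N\ge N_0(d,a)$, the finitely many smaller admissible $N$ being absorbed into the constants.
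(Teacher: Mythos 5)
Your proposal is correct and follows essentially the same route as the paper, which obtains Proposition \ref{CP1} precisely by combining Proposition \ref{BP1} with Theorem \ref{CT1} along the lines of the proof of Proposition \ref{IP1}; your additional bookkeeping (monotonicity of $\sigma_M^-(\D_{Q(\bw)})_1$ in $M$, the choice of $m$, and the window check $N^{a-1}(\log N)^3 \ll N^{a-1/2}(\log N)^{1/2}$) is exactly what is needed to make that implication explicit. The only detail left implicit in your absorption of small $N$ into the constants is a uniform positive lower bound there, which is immediate since any competitor $g$ has vanishing Fourier coefficients on $Q(\bw)$, so $\sigma_M^-(\D_{Q(\bw)})_1 \ge 1$.
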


Note that $\|\D_{Q(\bw)})\|_2 = |Q(\bw)|^{1/2} \le (3N/2)^{1/2}$.

{\bf Comment.} Theorem \ref{AT1} and Proposition \ref{BP1} indicate importance of the best $M$-term approximation characteristics $\sigma_M^c$ and $\sigma_M^-$ for the Bernstein-type discretization theorems. It is an interesting and non-trivial problem to estimate these approximation characteristics. 
We demonstrated above different kinds of techniques (a general theory of uniform approximation property, 
the de la Vall{\'e}e Poussin kernels, the Riesz products, and generalized de la Vall{\'e}e Poussin kernels in the  proof of Theorem \ref{BT1}), which can be successfully applied for proving 
the upper bounds for the $\sigma_M^c$ and $\sigma_M^-$. However, the problem of finding convenient properties of a subspace $X_N$ or a set $Q$ for estimating the $\sigma_M^c$ and $\sigma_M^-$ is an important open problem.

\section{Sampling recovery in the uniform norm}
\label{E}

The problem of recovery (reconstruction) of an unknown function defined on a subset of  $\R^d$ from its samples at a finite number of points is a fundamental problem of pure and applied mathematics. We would like to construct recovering operators (algorithms) which are good in the sense of accuracy, stability, and computational complexity. In this section we discuss 
the issues of accuracy and stability. Following a standard in approximation theory approach we define some 
optimal characteristics -- the Kolmogorov widths and errors of optimal recovery -- for a given function class and establish relations between them. In this section we study sampling recovery in the uniform norm.  This study goes along the lines of the sampling recovery of the $L_2$ norm presented in \cite{VT183}. 
Analysis in \cite{VT183} is based on recent deep results in discretization of the $L_2$ norms of
functions from finite dimensional subspaces  (see \cite{VT158}, \cite{DPSTT2}, and \cite{LT}). 
Our analysis in this section is based on the discretization result -- Theorem \ref{IT3}.
 In this section we consider the sampling recovery on an arbitrary compact subset $\bF$ of $\cC(\Omega)$. The reader can find some recovery results for the uniform norm in the case, when $\bF$ is the unit ball of a reproducing kernel Hilbert space, in the very recent paper \cite{PT}.   
 
Recall the setting 
 of the optimal sampling recovery. For a fixed $m$ and a set of points  $\xi:=\{\xi^j\}_{j=1}^m\subset \Omega$, let $\Phi$ be a linear operator from $\bbC^m$ into $L_p(\Omega,\mu)$.
Denote for a class $\bF$ (usually, centrally symmetric and compact subset of $L_p(\Omega,\mu)$)
$$
\varrho_m(\bF,L_p) := \inf_{\text{linear}\, \Phi; \,\xi} \sup_{f\in \bF} \|f-\Phi(f(\xi^1),\dots,f(\xi^m))\|_p.
$$
The above described recovery procedure is a linear procedure. 
The following modification of the above recovery procedure is also of interest. We now allow any mapping $\Phi : \bbC^m \to X_N \subset L_p(\Omega,\mu)$ where $X_N$ is a linear subspace of dimension $N\le m$ and define
$$
\varrho_m^*(\bF,L_p) := \inf_{\Phi; \xi; X_N, N\le m} \sup_{f\in \bF}\|f-\Phi(f(\xi^1),\dots,f(\xi^m))\|_p.
$$

In both of the above cases we build an approximant, which comes from a linear subspace of dimension at most $m$. 
It is natural to compare quantities $\varrho_m(\bF,L_p)$ and $\varrho_m^*(\bF,L_p)$ with the 
Kolmogorov widths. Let $\bF\subset L_p$ be a centrally symmetric compact. The quantities  
$$
d_n (\bF, L_p) := \operatornamewithlimits{inf}_{\{u_i\}_{i=1}^n\subset L_p}
\sup_{f\in \bF}
\operatornamewithlimits{inf}_{c_i} \left \| f - \sum_{i=1}^{n}
c_i u_i \right\|_p, \quad n = 1, 2, \dots,
$$
are called the {\it Kolmogorov widths} of $\bF$ in $L_p$. In the definition of
the Kolmogorov widths we take for $f\in \bF$, as an approximating element
from $U := \sp \{u_i \}_{i=1}^n$ the element of best
approximation. This means
that in general (i.e. if $p\neq 2$) this method of approximation is not linear.

 Recently an outstanding progress has been done in the sampling recovery in the $L_2$ norm (see, for instance, \cite{CM}, \cite{KU}, \cite{KU2}, \cite{KKP},  \cite{NSU}, \cite{CoDo}, \cite{KUV}, \cite{TU1}). We give a very brief comments on those interesting results. For special sets $\bF$ (in the reproducing kernel Hilbert space setting) the following inequality is known (see \cite{NSU} and \cite{KU}):
\be\label{R2}
\ro_{n}(\bF,L_2) \le C\left(\frac{\log n}{n}\sum_{k\ge cn} d_k (\bF, L_2)^2\right)^{1/2}
\ee
with absolute constants $C,c>0$.  We refer the reader for further results in this direction to the   paper \cite{KU2}. The above results were proved with the aid of a classical type of algorithm -- weighted least squares. A variant of this method -- the empirical risk minimization -- is a standard tool in learning theory and nonparametric statistics (see, for instance, \cite{GKKW} and \cite{VTbook}, Ch.4). 
The weighted least squares algorithm was successfully applied in the sampling recovery for the first time in \cite{CM}. The breakthrough results in sampling discretization of the $L_2$ norm (see \cite{VT158}, \cite{LT}) and in sampling recovery in the $L_2$ norm (see \cite{KU}, \cite{KU2}, \cite{NSU}) are based on results by A.~Marcus, D.A.~Spielman, and
N.~Srivastava from \cite{MSS} (see Corollary 1.5 with $r=2$ there) obtained for solving the Kadison-Singer problem. Also, results from \cite{BSS} play a fundamental role in sampling discretization of the $L_2$ norm. The approach, based 
on \cite{MSS} allows us to obtain optimal (in the sense of order) results for discretization of the $L_2$ norm (see \cite{VT158} and \cite{LT}). For the first time it was done in \cite{VT158} with the help of a lemma from \cite{NOU}. The corresponding lemma from \cite{NOU}  was further generalized in \cite{LT} for proving optimal in the sense of order sampling discretization results. A version of the corresponding lemma from \cite{LT} was used in \cite{NSU} for the sampling recovery.  The first application of the results from \cite{BSS} in the sampling discretization of the $L_2$ norm was done in \cite{VT159}.
The reader can find a detailed discussion of these results in \cite{KKLT}, Section 2.6.

 The following general inequality was proved in \cite{VT183}.
\begin{Theorem}[\cite{VT183}]\label{ET1}  There exist two positive absolute constants $b$ and $B$ such that for any   compact subset $\Omega$  of $\R^d$, any probability measure $\mu$ on it, and any compact subset $\bF$ of $\cC(\Omega)$ we have
$$
\ro_{bn}(\bF,L_2(\Omega,\mu)) \le Bd_n(\bF,L_\infty).
$$
\end{Theorem}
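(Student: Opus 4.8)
\textit{Proof of Theorem \ref{ET1} (plan).} The plan is to reduce the $L_2$ recovery estimate to two ingredients: a near-optimal subspace for the Kolmogorov width $d_n(\bF,L_\infty)$, and the deep sampling discretization theorem for the $L_2$ norm applied to that subspace. Fix $\e>0$ and choose functions $u_1,\dots,u_n$ so that, setting $X:=\sp\{1,u_1,\dots,u_n\}$ (I adjoin the constant $1$, raising the dimension to at most $n+1$), every $f\in\bF$ admits $g_f\in X$ with $\|f-g_f\|_\infty\le d_n(\bF,L_\infty)+\e=:\de$. One must take $X\subset\cC(\Omega)$ so that point evaluation at the sampling nodes is meaningful; this is legitimate because $\bF\subset\cC(\Omega)$, and if needed the finite-set reduction of Section \ref{A} (replacing $\Omega$ by a fine net, using that the compact $\bF$ is equicontinuous) removes the issue altogether.

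Next I would invoke the $L_2$ sampling discretization theorem based on \cite{MSS} (see \cite{VT158}, \cite{LT}): since $\dim X\le n+1$ and $X\subset\cC(\Omega)$, there exist an absolute constant $b$, points $\{\xi^j\}_{j=1}^m$ with $m\le bn$, and positive weights $\{w_j\}_{j=1}^m$ such that
$$
\tfrac12\|g\|_2^2 \le \sum_{j=1}^m w_j|g(\xi^j)|^2 \le \tfrac32\|g\|_2^2,\qquad g\in X.
$$
The lower inequality makes $g\mapsto\sum_j w_j|g(\xi^j)|^2$ positive definite on $X$, so the weighted least squares problem is well posed, and I define the recovery $\hat f:=\Phi(f(\xi^1),\dots,f(\xi^m))$ to be the unique minimizer over $g\in X$ of $\sum_{j=1}^m w_j|f(\xi^j)-g(\xi^j)|^2$. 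This $\Phi$ is linear in the data $(f(\xi^j))_j$, so the resulting bound is for the linear quantity $\ro$. Applying the upper inequality to $g\equiv1$ gives $\sum_j w_j\le\tfrac32\|1\|_2^2=\tfrac32$, which I will use to control the sampling error of $f-g_f$.

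The estimate then follows from $\|f-\hat f\|_2\le\|f-g_f\|_2+\|g_f-\hat f\|_2$. The first term is at most $\|f-g_f\|_\infty\le\de$ since $\mu$ is a probability measure. For the second, both $g_f,\hat f\in X$, so the lower discretization inequality gives $\|g_f-\hat f\|_2^2\le2\sum_j w_j|g_f(\xi^j)-\hat f(\xi^j)|^2$. Splitting the increment through $f(\xi^j)$, using the minimizing property of $\hat f$ (which yields $\sum_j w_j|f(\xi^j)-\hat f(\xi^j)|^2\le\sum_j w_j|f(\xi^j)-g_f(\xi^j)|^2$ because $g_f\in X$), and then the pointwise bound $|f(\xi^j)-g_f(\xi^j)|\le\|f-g_f\|_\infty\le\de$ together with $\sum_j w_j\le\tfrac32$, gives $\|g_f-\hat f\|_2\le2\sqrt3\,\de$. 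Hence $\|f-\hat f\|_2\le(1+2\sqrt3)\de$; as $\e\to0$ and $m\le bn$, monotonicity of $\ro$ in the number of nodes yields $\ro_{bn}(\bF,L_2)\le\ro_m(\bF,L_2)\le B\,d_n(\bF,L_\infty)$ with $B=1+2\sqrt3$.

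The main obstacle is the discretization input of the second step: obtaining a two-sided weighted Marcinkiewicz--Zygmund inequality in $L_2$ with \emph{absolute} constants while keeping the number of nodes $m$ proportional to $\dim X$ is exactly the deep result flowing from the Marcus--Spielman--Srivastava resolution of the Kadison--Singer problem, and once it is granted the remaining argument is elementary. The only secondary point needing care is the reduction to a continuous extremal subspace $X\subset\cC(\Omega)$ discussed above, which is routine.
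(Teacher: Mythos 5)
Your proposal is correct and is essentially the route the paper takes: Theorem \ref{ET1} is imported from \cite{VT183}, and both that proof and the paper's parallel proof of Theorem \ref{ET2} proceed exactly as you do --- pick a near-optimal subspace for $d_n(\bF,L_\infty)$ (enlarged by the constant function), apply the MSS-based $L_2$ sampling discretization with $m\le Cn$ points and weights of bounded sum (this is precisely Theorem \ref{DT5} together with Remark \ref{DR1}), and run weighted least squares, whose error bound $\|f-\hat f\|_2\le C\,d(f,X)_\infty$ is the $L_2$ analog of Theorem \ref{ET3}. Your error analysis (lower discretization inequality, minimizing property, and $\sum_j w_j\le 3/2$ via the constant function) is the standard proof of that lemma, so the argument is complete up to the harmless point that the absolute discretization constants need not be exactly $1/2$ and $3/2$, which only changes the value of $B$.
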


 Very recently Theorem \ref{ET1} was applied to estimate the error of optimal sampling recovery in $L_2$ of functions with small mixed smoothness (see \cite{TU1}). In the case of small smoothness we cannot use (\ref{R2}) because the series in (\ref{R2}) diverges.

In this section we prove the following analog of Theorem \ref{ET1}. 
\begin{Theorem}\label{ET2}  Let  $\Omega$ be a  compact subset   of $\R^d$. For  any compact subset $\bF$ of $\cC(\Omega)$ we have
$$
\ro_{9^n}^*(\bF,L_\infty(\Omega)) \le 5d_n(\bF,L_\infty).
$$
\end{Theorem}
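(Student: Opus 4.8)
The plan is to prove Theorem \ref{ET2} by combining the discretization result of Theorem \ref{IT3} with a near-best approximation from the optimal Kolmogorov subspace, mimicking the structure of the proof of Theorem \ref{ET1} but replacing the $L_2$ discretization with the $L_\infty$ discretization we now have available. First I would fix an arbitrary $\delta>0$ and choose an $n$-dimensional subspace $U_n = \sp\{u_i\}_{i=1}^n \subset L_\infty(\Omega)$ that is near-optimal for the Kolmogorov width, i.e. for every $f\in\bF$ there is an element $P(f)\in U_n$ with $\|f-P(f)\|_\infty \le d_n(\bF,L_\infty)+\delta$. Since $\bF\subset\cC(\Omega)$ and the width is computed with $L_\infty$, the approximating element $P(f)$ lies within $\cC(\Omega)$ up to the error, so the whole analysis can be carried out in the continuous setting.

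The key step is to apply Theorem \ref{IT3} to the subspace $X_n := U_n$ (an $n$-dimensional subspace of $\cC(\Omega)$). This yields a set of points $\xi = \{\xi^\nu\}_{\nu=1}^m$ with $m \le 9^n$ such that
\be
\|g\|_\infty \le 2\max_\nu |g(\xi^\nu)|, \qquad g\in U_n.
\ee
This is exactly the discretization inequality that lets us recover elements of $U_n$ stably from their sample values at $\xi$. The recovery operator $\Phi$ I would build maps the sample vector $(f(\xi^1),\dots,f(\xi^m))$ into $U_n$: given the data, choose $\Phi(f(\xi^1),\dots,f(\xi^m))$ to be an element $h\in U_n$ whose values at the $\xi^\nu$ best match the observed samples, for instance an $h\in U_n$ minimizing $\max_\nu |f(\xi^\nu)-h(\xi^\nu)|$ (a near-minimizer, to avoid existence fuss). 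Because the output lies in an $N=n$-dimensional subspace and $N\le m \le 9^n$, this is an admissible competitor for $\ro_{9^n}^*(\bF,L_\infty)$.

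The estimate then proceeds by a triangle-inequality bookkeeping. Writing $h = \Phi(\dots)$ and comparing with the near-best approximant $P(f)\in U_n$, I would bound $\|f-h\|_\infty \le \|f-P(f)\|_\infty + \|P(f)-h\|_\infty$, where $P(f)-h\in U_n$ so the discretization inequality applies: $\|P(f)-h\|_\infty \le 2\max_\nu|P(f)(\xi^\nu)-h(\xi^\nu)|$. Inserting $f(\xi^\nu)$ via a further triangle inequality, $|P(f)(\xi^\nu)-h(\xi^\nu)| \le |P(f)(\xi^\nu)-f(\xi^\nu)| + |f(\xi^\nu)-h(\xi^\nu)|$, and using that $h$ was chosen to make $\max_\nu|f(\xi^\nu)-h(\xi^\nu)|$ at most $\max_\nu|f(\xi^\nu)-P(f)(\xi^\nu)|$ (since $P(f)$ is itself a competitor in the minimization), the second term is controlled by the first. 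Each sampled deviation $|f(\xi^\nu)-P(f)(\xi^\nu)|$ is at most $\|f-P(f)\|_\infty \le d_n+\delta$. Carrying the constants through gives $\|f-h\|_\infty \le (1 + 2\cdot 2)(d_n+\delta) = 5(d_n+\delta)$.

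The main obstacle, and the point requiring care, is the constant arithmetic together with correctly accounting for the minimization so that $\max_\nu|f(\xi^\nu)-h(\xi^\nu)| \le d_n+\delta$ rather than something larger; this is what yields the clean factor $5$. Once this is arranged, letting $\delta\to 0$ and taking the supremum over $f\in\bF$ gives $\ro_{9^n}^*(\bF,L_\infty(\Omega)) \le 5\,d_n(\bF,L_\infty)$, completing the proof. The only subtlety beyond bookkeeping is ensuring a minimizer (or near-minimizer) $h\in U_n$ exists, which follows since the objective $\max_\nu|f(\xi^\nu)-h(\xi^\nu)|$ is a continuous, coercive-modulo-kernel function of the coefficients of $h$ on the finite-dimensional $U_n$; passing to a near-minimizer absorbed into $\delta$ avoids any compactness argument altogether.
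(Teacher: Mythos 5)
Your proposal is correct and takes essentially the same route as the paper: the paper applies Theorem \ref{IT3} to a near-optimal subspace for $d_n(\bF,L_\infty)$ to get assumption \textbf{A1} with $m=9^n$ and $C_1=1/2$, and then invokes Theorem \ref{ET3} (the $\ell\infty(\xi)$ least-deviation recovery bound from \cite{VT183}), whose constant $2C_1^{-1}+1=5$ comes from exactly the triangle-inequality and minimizer-comparison argument you wrote out. The only difference is that you unfold the proof of Theorem \ref{ET3} inline instead of citing it as a black box.
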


 It is interesting to compare Theorem \ref{ET2} with the following result of E. Novak (see \cite{NoLN}, Proposition 1.2.5)
\be\label{Novak}
\ro_{n}^*(\bF,L_\infty(\Omega)) \le (n+1)d_n(\bF,L_\infty).
\ee
Theorem \ref{ET2} complements inequality (\ref{Novak}) in the case, when the sequence $\{d_n(\bF,L_\infty)\}$ decays 
slow, slower than $1/n$. 
\begin{proof}[Proof of Theorem \ref{ET2}] The proof goes along the lines of the proof of Theorem \ref{ET1}. Recall some notations 
from \cite{VT183} and formulate a result from \cite{VT183} that we use in the proof. Let $X_N$ be an $N$-dimensional subspace of the space of continuous functions $\cC(\Omega)$. For a fixed $m$ and a set of points  $\xi:=\{\xi^\nu\}_{\nu=1}^m\subset \Omega$ we associate with a function $f\in \cC(\Omega)$ a vector
$$
S(f,\xi) := (f(\xi^1),\dots,f(\xi^m)) \in \bbC^m.
$$
Denote
$$
\|S(f,\xi)\|_\infty := \max_{\nu}|f(\xi^\nu)|.
$$
Define the best approximation of $f\in L_\infty(\Omega)$  by elements of $X_N$ as follows
$$
d(f,X_N)_\infty := \inf_{u\in X_N} \|f-u\|_\infty.
$$

We  need the following assumption.

{\bf A1. Discretization.}  Suppose that $\xi:=\{\xi^j\}_{j=1}^m\subset \Omega$ is such that for any 
$u\in X_N$ we have
$$
C_1\|u\|_\infty \le \|S(u,\xi)\|_{\infty}  
$$
with a positive constant $C_1$ which may depend on $d$. 
 
Consider the following well known recovery operator (algorithm)  
$$
\ell \infty(\xi)(f) := \ell \infty(\xi,X_N)(f):=\text{arg}\min_{u\in X_N} \|S(f-u,\xi)\|_\infty.
$$
Note that the above algorithm $\ell \infty(\xi)$ only uses the function values $f(\xi^\nu)$, $\nu=1,\dots,m$. The following theorem is from \cite{VT183}.

\begin{Theorem}[\cite{VT183}]\label{ET3} Under assumption {\bf A1} for any $f\in \cC(\Omega)$ we have
$$
\|f-\ell \infty(\xi)(f)\|_\infty \le (2C_1^{-1}  +1)d(f, X_N)_\infty.
$$
\end{Theorem}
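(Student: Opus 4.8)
The plan is to prove Theorem~\ref{ET3} by the classical near-best-approximation (quasi-optimality) argument for the minimization operator $\ell\infty(\xi)$, in which Assumption \textbf{A1} plays two roles: it guarantees that $\ell\infty(\xi)$ is well defined, and it converts control at the sample points into control in the uniform norm. Fix $f\in\cC(\Omega)$ and abbreviate $\delta:=d(f,X_N)_\infty$. First I would record two auxiliary elements: a uniform best approximant $u^*\in X_N$ with $\|f-u^*\|_\infty=\delta$ (it exists because $X_N$ is finite-dimensional), and the minimizer $\hat u:=\ell\infty(\xi)(f)$. The existence and meaningfulness of $\hat u$ follow from \textbf{A1}: the inequality $C_1\|u\|_\infty\le\|S(u,\xi)\|_\infty$ shows that $u\mapsto\|S(u,\xi)\|_\infty$ is a genuine norm on $X_N$ (it is positive definite there), so the continuous coercive functional $u\mapsto\|S(f-u,\xi)\|_\infty$ attains its minimum on the finite-dimensional space $X_N$.

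The core estimate is a triangle inequality split through $u^*$, namely $\|f-\hat u\|_\infty\le\|f-u^*\|_\infty+\|u^*-\hat u\|_\infty=\delta+\|u^*-\hat u\|_\infty$, after which the whole task is to bound $\|u^*-\hat u\|_\infty$ by $2C_1^{-1}\delta$. Since $u^*-\hat u\in X_N$, Assumption \textbf{A1} gives $\|u^*-\hat u\|_\infty\le C_1^{-1}\|S(u^*-\hat u,\xi)\|_\infty$, so it suffices to show $\|S(u^*-\hat u,\xi)\|_\infty\le 2\delta$. Here I would use the triangle inequality for the sampling semi-norm, $\|S(u^*-\hat u,\xi)\|_\infty\le\|S(f-u^*,\xi)\|_\infty+\|S(f-\hat u,\xi)\|_\infty$, then invoke the defining minimality of $\hat u$ to replace the second term, $\|S(f-\hat u,\xi)\|_\infty\le\|S(f-u^*,\xi)\|_\infty$, and finally the trivial domination $\|S(g,\xi)\|_\infty=\max_\nu|g(\xi^\nu)|\le\|g\|_\infty$ applied to $g=f-u^*$ to bound both remaining terms by $\|f-u^*\|_\infty=\delta$. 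Chaining these yields $\|S(u^*-\hat u,\xi)\|_\infty\le 2\delta$, hence $\|u^*-\hat u\|_\infty\le 2C_1^{-1}\delta$, and therefore $\|f-\hat u\|_\infty\le(1+2C_1^{-1})\delta=(2C_1^{-1}+1)d(f,X_N)_\infty$, which is the assertion.

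There is no serious analytic obstacle here: the argument is a short chain of triangle inequalities once the two inputs, Assumption \textbf{A1} and the minimality of $\hat u$, are used in the right places, and the constant $2C_1^{-1}+1$ is forced by invoking \textbf{A1} once and the ``sampling $\le$ sup'' bound twice (once through minimality). The only points that deserve explicit attention, and which I would state carefully at the outset, are the well-definedness issues: that a uniform best approximant $u^*$ exists (finite dimension) and that the $\arg\min$ defining $\ell\infty(\xi)(f)$ is attained, with the final bound holding for \emph{any} chosen minimizer. Both are immediate consequences of \textbf{A1} turning $\|S(\cdot,\xi)\|_\infty$ into a norm on $X_N$. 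I would also note that the argument uses nothing about $\xi$ beyond \textbf{A1}, so it applies verbatim to the point set produced by Theorem~\ref{IT3} with $C_1=1/2$, which is exactly how it feeds into the proof of Theorem~\ref{ET2}.
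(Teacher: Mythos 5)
Your proof is correct: the triangle-inequality split through a best uniform approximant $u^*$, the use of minimality of $\hat u$ at the sample points, the trivial bound $\|S(g,\xi)\|_\infty\le\|g\|_\infty$, and one application of \textbf{A1} give exactly the constant $2C_1^{-1}+1$, and your attention to existence of $u^*$ and of the minimizer (via coercivity from \textbf{A1}) is appropriate. The paper itself does not reproduce a proof of this theorem -- it cites it from \cite{VT183} -- and your argument is precisely the standard quasi-optimality (Lebesgue-type) chain used there, so there is no substantive difference in approach.
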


Take any $\e>0$ and choose a subspace $X_n^\e$ of dimension $n$ satisfying: For all $f\in \bF$
\be\label{E1}
d(f,X_n^\e)_\infty \le (1+\e)d_n(\bF,L_\infty).
\ee
By Theorem \ref{IT3} the subspace $X_n^\e$ satisfies assumption {\bf A1} with $m=9^n$, $C_1=1/2$, and some set $\xi:=\{\xi^j\}_{j=1}^m\subset \Omega$. Then, by Theorem \ref{ET3} and by (\ref{E1}) we obtain the inequality: For any $f\in \cC(\Omega)$
$$
\|f-\ell \infty(\xi)(f)\|_\infty \le 5d(f, X_n^\e)_\infty \le 5(1+\e)d_n(\bF,L_\infty).
$$
This completes the proof of Theorem \ref{ET2}.

\end{proof}

In the definition of the quantity $\varrho_m^*(\bF,L_p)$
we allow any mapping $\Phi : \bbC^m \to X_N \subset L_p(\Omega,\mu)$ where $X_N$ is a linear subspace of dimension $N\le m$. We now consider an analog of the quantity $\varrho_m^*(\bF,L_\infty)$ with some restrictions imposed on the mapping $\Phi : \bbC^m \to X_N \subset \cC(\Omega)$. For a given subspace $X_N\subset \cC(\Omega)$, a set $\xi\in \Omega^m$, and given parameters $m\in\N$ and $A>0$ define the following class of nonlinear mappings ($A$-stable mappings)
$$
\cN\cL(\xi,X_N,A) := \{\Phi\, : \, \Phi : \bbC^m \to X_N,\, \|\Phi(a\by)\|_\infty = |a|\|\Phi(\by)\|_\infty,\, 
$$
$$
\|\Phi(S(f,\xi))\|_\infty \le A\|S(f,\xi)\|_\infty,\, f\in X_N\}.
$$
Consider the following recovery characteristic
$$
\varrho_m^*(\bF,L_\infty,A) := \inf_{\Phi\in \cN\cL(\xi,X_N,A); \xi; X_N, N\le m} \sup_{f\in \bF}\|f-\Phi(f(\xi^1),\dots,f(\xi^m))\|_\infty.
$$
This characteristic gives the minimal error that can be achieved with an $A$-stable algorithm.

We begin with a simple statement, which connects the characteristic $\varrho_m^*(X_N^\infty,L_\infty,A)$ 
with discretization of the uniform norm on $X_N$. 

\begin{Proposition}\label{EP1} Inequality $\varrho_m^*(X_N^\infty,L_\infty,A) \le 1/2$ implies that there exists a set $\xi(m)\subset \Omega$ of $m$ points such that for any $f\in X_N$ we have
$$
\|f\|_\infty \le 2A\|S(f,\xi(m))\|_\infty.
$$
\end{Proposition}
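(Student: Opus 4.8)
The plan is to prove the contrapositive-style implication directly by unpacking the definition of $\varrho_m^*(X_N^\infty,L_\infty,A)$ and exploiting the $A$-stability and homogeneity of the admissible mappings. Here $X_N^\infty$ presumably denotes the unit ball of $X_N$ in the uniform norm (the class $\bF$ being recovered is the unit ball of $X_N$). The hypothesis $\varrho_m^*(X_N^\infty,L_\infty,A)\le 1/2$ means there exist a subspace (which we may take to be $X_N$ itself, or one containing it), a set of points $\xi(m)=\{\xi^\nu\}_{\nu=1}^m$, and a mapping $\Phi\in\cN\cL(\xi,X_N,A)$ such that $\sup_{f\in X_N^\infty}\|f-\Phi(S(f,\xi))\|_\infty$ is close to $1/2$; to be safe I would fix $\Phi$ and $\xi$ realizing recovery error at most, say, $1/2$ (or $1/2+\va$ and let $\va\to 0$ at the end).

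The key computation would be to take an arbitrary $f\in X_N$ with $\|f\|_\infty=1$ and compare $f$ with its recovered approximant $\Phi(S(f,\xi))$. Since $f$ itself lies in $X_N$, the recovery error bound gives $\|f-\Phi(S(f,\xi))\|_\infty \le 1/2$, whence by the reverse triangle inequality $\|\Phi(S(f,\xi))\|_\infty \ge \|f\|_\infty - 1/2 = 1/2$. First I would use this to get a lower bound on $\|\Phi(S(f,\xi))\|_\infty$. Then I would invoke the $A$-stability property of $\Phi$, namely $\|\Phi(S(f,\xi))\|_\infty \le A\|S(f,\xi)\|_\infty$, to conclude
$$
\tfrac{1}{2} \le \|\Phi(S(f,\xi))\|_\infty \le A\|S(f,\xi)\|_\infty,
$$
which rearranges to $\|S(f,\xi)\|_\infty \ge 1/(2A)$. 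Since $\|f\|_\infty=1$, this reads $\|f\|_\infty \le 2A\|S(f,\xi)\|_\infty$, and the general case follows by homogeneity.

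The final step is to remove the normalization. For a general nonzero $f\in X_N$, I would apply the normalized inequality to $f/\|f\|_\infty$ and scale back; the homogeneity property $\|\Phi(a\by)\|_\infty=|a|\|\Phi(\by)\|_\infty$ in the definition of $\cN\cL$ is exactly what guarantees the estimate is scale-invariant, so the bound $\|f\|_\infty \le 2A\|S(f,\xi)\|_\infty$ holds for all $f\in X_N$ (the $f=0$ case being trivial).

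I do not expect a genuine obstacle here, as the statement is essentially a definitional consequence. The one point requiring care is ensuring that the subspace on which $\Phi$ is defined contains the element $f$ being tested, so that the recovery-error bound (which a priori controls $\sup_{f\in\bF}$) actually applies to arbitrary members of $X_N$; this is why it is important that $\bF = X_N^\infty$ is precisely the unit ball of the same subspace $X_N$ whose discretization we want, and that the recovery operator maps into $X_N$ (so $f-\Phi(S(f,\xi))\in X_N$ and the sup over the unit ball transfers to all of $X_N$ by homogeneity). If instead the minimizing subspace in the definition of $\varrho_m^*$ were some other $X_N'$, I would need to argue that $X_N^\infty\subset X_N'$ forces $X_N\subseteq X_N'$, so that the argument still goes through; I would address this subtlety explicitly when writing the full proof.
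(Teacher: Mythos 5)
Your core computation is exactly the paper's: for $f\in X_N$ with $\|f\|_\infty=1$, the triangle inequality gives $\|\Phi(S(f,\xi))\|_\infty\ge 1/2-\va$, the $A$-stability bound converts this into $(1/2-\va)\|f\|_\infty\le A\|S(f,\xi)\|_\infty$, and scaling finishes. The genuine gap is in how you dispose of the infimum defining $\varrho_m^*$. Your first option --- ``fix $\Phi$ and $\xi$ realizing recovery error at most $1/2$'' --- assumes the infimum is attained, which nothing justifies: it is an infimum over point sets, subspaces, and nonlinear maps, with no compactness structure on the class of maps that would produce a minimizer. Your second option --- error $1/2+\va$ and ``let $\va\to 0$ at the end'' --- is the right start but is not yet a proof: for each $\va$ you get a \emph{different} point set $\xi^\va$ and only the weaker inequality $\|f\|_\infty\le \frac{2A}{1-2\va}\|S(f,\xi^\va)\|_\infty$, whereas the proposition asserts one fixed set of $m$ points with the constant exactly $2A$. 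The missing step, which is how the paper closes its proof, is a compactness argument: since $\Omega$ is compact, $\Omega^m$ is compact, so a sequence $\xi^{\va_k}$ with $\va_k\to 0$ has a subsequence converging to some $\xi^0\in\Omega^m$; every $f\in X_N$ is continuous, hence $\|S(f,\xi^{\va_k})\|_\infty\to\|S(f,\xi^0)\|_\infty$ along that subsequence, and passing to the limit in $(1/2-\va_k)\|f\|_\infty\le A\|S(f,\xi^{\va_k})\|_\infty$ yields $\|f\|_\infty\le 2A\|S(f,\xi^0)\|_\infty$ for all $f\in X_N$. Without this step you only obtain the statement with constant $2A/(1-2\va)$ and a $\va$-dependent set, which is strictly weaker than what is claimed.

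A secondary caution about the subtlety you flagged at the end: your proposed fix --- deducing $X_N\subseteq X_N'$ from $X_N^\infty\subset X_N'$ --- starts from a premise that is not available, since a small recovery error on the unit ball does not place the class inside the recovery subspace. The paper's proof (and the intended reading of $\varrho_m^*(X_N^\infty,L_\infty,A)$ here) simply takes the admissible maps to be $\Phi\in\cN\cL(\xi,X_N,A)$ for the \emph{same} $X_N$; this is precisely what licenses applying the stability inequality to the $f\in X_N$ being tested. Under the literal ``any recovery subspace'' reading, the stability hypothesis would only concern functions of the foreign subspace, and neither your argument nor the paper's would go through, so this point should be resolved by fixing the reading of the definition rather than by a containment argument.
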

\begin{proof} Let $\e>0$ and let $\xi^\e \in \Omega^m$ and $\Phi_\e \in \cN\cL(\xi^\e,X_N,A)$ be such that for any $f\in X_N^\infty$ we have
\be\label{E2}
\|f-\Phi_{\e}(S(f,\xi^\e))\|_\infty \le 1/2+\e. 
\ee
Then for $f\in X_N$ such that $\|f\|_\infty =1$ we obtain
$$
1=\|f\|_\infty = \|f-\Phi_{\e}(S(f,\xi^\e))+\Phi_{\e}(S(f,\xi^\e))\|_\infty \le \|\Phi_{\e}(S(f,\xi^\e))\|_\infty+1/2+\e
$$
and, therefore,
$$
(1/2-\e)\|f\|_\infty \le \|\Phi_{\e}(S(f,\xi^\e))\|_\infty \le A\|S(f,\xi^\e))\|_\infty.
$$
This and a  simple compactness argument, used for $\e\to0$, complete the proof of Proposition \ref{EP1}.

\end{proof}

We now make a comment on stability. It is well known (see, for instance, \cite{DPTT}, Proposition 3.1) that $\varrho_N(X_N^\infty,L_\infty) =0$. In other words, for any $N$-dimensional subspace $X_N\subset \cC(\Omega)$ we can find a set $\xi =\{\xi^j\}_{j=1}^N$ of $N$ points such that 
any $f\in X_N$ can be recovered exactly from the vector $S(f,\xi)$ (even by a linear algorithm). 
However, as we will explain momentarily, such a recovery algorithm may be unstable. As an example we take $X_n := \Tr(\La_n)$, where $\La_n$ is a lacunary set from Theorem \ref{IT2}. Suppose that 
$\varrho_m^*(X_n^\infty,L_\infty,A) \le 1/2$ with some parameters $m$ and $A$. Clearly, this assumption is much weaker than the exact recovery by stable algorithm assumption. Then by Proposition \ref{EP1} there exists a set $\xi(m)\subset \Omega$ of $m$ points such that for any $f\in X_n$ we have
$$
\|f\|_\infty \le 2A\|S(f,\xi(m))\|_\infty.
$$
We apply Theorem \ref{IT2} and obtain that $m\ge (n/e)e^{Cn/(2A)^2}$. This means that for a stable nontrivial approximate recovery on $X_n$ we need exponentially (in $n$) many points. 

We now proceed to a positive result on stable recovery.  
\begin{Proposition}\label{EP2}  Let  $\Omega$ be a  compact subset   of $\R^d$. For  any $N$-dimensional subspace $X_N\subset \cC(\Omega)$ we have
$$
\ro^*_{9^N}(X_N^\infty,L_\infty(\Omega),2) =0.
$$
\end{Proposition}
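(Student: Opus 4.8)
The plan is to produce, for a suitable sample set, a single linear reconstruction map that recovers every element of $X_N$ exactly while respecting the $2$-stability constraint, so that the infimum defining $\ro^*_{9^N}(X_N^\infty,L_\infty,2)$ is attained with value $0$. First I would invoke Theorem \ref{IT3} to obtain a set $\xi=\{\xi^\nu\}_{\nu=1}^m$ with $m\le 9^N$ such that $\|f\|_\infty\le 2\max_\nu|f(\xi^\nu)|=2\|S(f,\xi)\|_\infty$ for all $f\in X_N$; if the definition insists on exactly $9^N$ points, one simply adjoins additional points, since enlarging $\xi$ can only increase the discrete maximum and hence preserves this inequality. The key observation is that this discretization inequality forces the sampling operator $S_\xi:=S(\cdot,\xi)\colon X_N\to\bbC^m$ to be injective, because $S(f,\xi)=0$ gives $\|f\|_\infty\le 0$ and hence $f=0$.

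Next I would take the recovery subspace to be $X_N$ itself, which is legitimate since $N\le 9^N=m$. Writing $V:=S_\xi(X_N)\subset\bbC^m$ for the $N$-dimensional range and $T:=S_\xi^{-1}\colon V\to X_N$ for the (linear) inverse, I would fix any linear projection $P\colon\bbC^m\to V$ (for instance the orthogonal projection) and set $\Phi:=T\circ P\colon\bbC^m\to X_N$. Being linear, $\Phi$ automatically satisfies the homogeneity requirement $\|\Phi(a\by)\|_\infty=|a|\,\|\Phi(\by)\|_\infty$ on all of $\bbC^m$.

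It then remains to check that $\Phi\in\cN\cL(\xi,X_N,2)$ and that it recovers exactly. For $f\in X_N$ we have $S(f,\xi)\in V$, so $P(S(f,\xi))=S(f,\xi)$ and therefore $\Phi(S(f,\xi))=T(S(f,\xi))=f$. Consequently $\|\Phi(S(f,\xi))\|_\infty=\|f\|_\infty\le 2\|S(f,\xi)\|_\infty$, which is precisely the stability bound with $A=2$ (note this bound is required only on the range of $S_\xi$). Since $\Phi(S(f,\xi))=f$ for every $f\in X_N$, in particular for every $f\in X_N^\infty$, the recovery error $\sup_{f\in X_N^\infty}\|f-\Phi(S(f,\xi))\|_\infty$ equals $0$, and the nonnegative quantity $\ro^*_{9^N}(X_N^\infty,L_\infty,2)$ must vanish.

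I expect the only genuinely delicate point to be the homogeneity clause in the definition of $\cN\cL(\xi,X_N,A)$, which is imposed on all vectors of $\bbC^m$ rather than only on sampled data; this is exactly why I would insist on a \emph{linear} reconstruction $\Phi$ instead of, say, the $\ell\infty(\xi)$-minimization map, whose minimizer may fail to be unique off the range and whose homogeneity would then require extra bookkeeping. By contrast, the stability estimate is demanded only on $\{S(f,\xi):f\in X_N\}$, where it coincides verbatim with the discretization inequality furnished by Theorem \ref{IT3}, so no further work is needed there.
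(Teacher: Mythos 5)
Your proof is correct and follows essentially the same route as the paper: both invoke Theorem \ref{IT3} to obtain a set of at most $9^N$ points on which the sampling operator is injective on $X_N$, define $\Phi$ as the inverse of sampling (so recovery is exact), and observe that the stability bound with $A=2$ is exactly the discretization inequality. Your extension of $\Phi$ to all of $\bbC^m$ via a linear projection onto the range of the sampling operator is in fact slightly more careful than the paper's proof, which defines $\Phi$ only on sampled data and leaves that (harmless) extension implicit.
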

\begin{proof} We use the construction from 
  the proof of Theorem \ref{IT3}.  Let
 $\cE :=\cN_{1/4}= \{g^j\}_{j=1}^m$, $ m=N_{1/4}$, and let  $\xi(m) = \{\xi^\nu\}_{\nu=1}^m$, $m=N_{1/4}$, where $\xi^\nu$ is a point of maximum of $|g^\nu(\bx)|$ over $\Omega$, be the corresponding sets from the proof of Theorem \ref{IT3}.   Define $\Phi(S(f,\xi(m))):= f$ for $f\in X_N$. 
By Theorem \ref{IT3} we obtain that if for two functions $f,g\in X_N$ we have $S(f,\xi(m))=S(g,\xi(m))$ then $f=g$. Thus, the mapping $\Phi$ is defined in a correct way. Clearly, $f-\Phi(S(f,\xi(m)))=0$.
  It remains to prove that $\Phi \in \cN\cL(\xi(m),X_N,2)$. It follows directly from Theorem \ref{IT3}.
   This completes the proof.

\end{proof}

\begin{Remark}\label{ER1} The above simple argument gives the following conditional result. Suppose that a sunspace $X_N$ and a set of points $\xi=\{\xi^j\}_{j=1}^m$ satisfy condition {\bf A1}. 
Then we have 
$$
\ro^*_{m}(X_N^\infty,L_\infty(\Omega),C_1^{-1}) =0.
$$
\end{Remark}

\section{Discussion}
\label{D}

In this paper we have obtained some results on the sampling discretization of the uniform norm
both for a special case of trigonometric polynomials and for a rather general case of finite dimensional subspaces. We now present a discussion of known results on the sampling discretization of the uniform norm and give some comments on the results obtained above.
 
Let us begin our discussion of known results with the Marcinkiewicz-type discretization theorems for the hyperbolic cross trigonometric polynomials. For $\bs\in\Z^d_+$
  we define
$$
\rho (\bs) := \{\bk \in \Z^d : [2^{s_j-1}] \le |k_j| < 2^{s_j}, \quad j=1,\dots,d\}
$$
where $[x]$ denotes the integer part of $x$.
By $Q_n$ denote
the step hyperbolic cross, i.e.,
$$
Q_n := \cup_{\bs:\|\bs\|_1\le n} \rho(\bs).
$$
Then  the corresponding set of the hyperbolic cross polynomials is given by
$$
\Tr(Q_n) := \{f: f=\sum_{\bk\in Q_n} c_\bk e^{i(\bk,\bx)},\  \  c_\bk\in\mathbb{C}\}.
$$
The problem on
obtaining the sharp Marcinkiewicz-type discretization theorems for the hyperbolic cross trigonometric polynomials is not solved yet.
Since $Q_n \subset \Pi(2^n,\dots,2^n)$, from the result mentioned in the Introduction we get
$$
D(Q_n,m)\le C_1(d),\quad \text{provided} \quad m\ge C_2(d)2^{dn},
$$
with appropriate $C_1(d)$ and $C_2(d)$.
   
  The following nontrivial lower bound was obtained in \cite{KT3} -- \cite{KaTe03}.
 \begin{Theorem}[\cite{KT3} -- \cite{KaTe03}]\label{DT1} Let a finite set $\xi(m)=\{\xi^j\}_{j=1}^m\subset \mathbb T^2$ have a property:
 $$
\forall f \in \Tr(Q_n) \qquad \|f\| \le Bn^\al \|f\|_{\xi(m)}
$$
with some $0\le \al <1/2$. Then
$$
m \ge C_1 2^n n e^{C_2B^{-2}n^{1-2\al}},
$$
where $C_1$ and $C_2$ are positive absolute constants.
\end{Theorem}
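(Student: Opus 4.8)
The plan is to argue by duality, turning the hypothesized discretization inequality into a covering statement, and then to reduce the count of points to a small-ball (level-set) estimate for a well-chosen peak polynomial. First I would fix a polynomial $f_0\in\Tr(Q_n)$ that is maximized at the origin, $\|f_0\|_\infty=f_0(\mathbf 0)$, and for each sample point form the set
$$
A_\nu:=\Big\{\mathbf t\in\mathbb T^2:\ |f_0(\xi^\nu-\mathbf t)|\ge \|f_0\|_\infty/(Bn^\al)\Big\}.
$$
Applying the hypothesis to the translate $f_{0,\mathbf t}(\bx):=f_0(\bx-\mathbf t)$, which still lies in $\Tr(Q_n)$ and has $\|f_{0,\mathbf t}\|_\infty=\|f_0\|_\infty$ attained at $\bx=\mathbf t$, shows that for every $\mathbf t$ some $\xi^\nu$ satisfies $|f_0(\xi^\nu-\mathbf t)|\ge\|f_0\|_\infty/(Bn^\al)$; hence $\bigcup_\nu A_\nu=\mathbb T^2$. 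Since each $A_\nu$ is a reflected translate of the single super-level set $E:=\{\mathbf s:\ |f_0(\mathbf s)|\ge\|f_0\|_\infty/(Bn^\al)\}$, they all have the same measure $|E|$, and the covering forces $m\ge(2\pi)^2/|E|$. Everything is thereby reduced to exhibiting one $f_0$ whose top level set is tiny, namely $|E|\le C|Q_n|^{-1}e^{-C_2B^{-2}n^{1-2\al}}$, since $|Q_n|\asymp 2^n n$.

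The choice of $f_0$ is the decisive point, and here the multiscale geometry of $Q_n$ enters. The naive choice $f_0=\D_{Q_n}$ does not work: the one-dimensional Dirichlet factors of the individual dyadic blocks $\rho(\bs)$ produce coherent secondary ridges with heavy ($1/|\cdot|$-type) tails, so that $|E|$ is only polynomially small and the covering yields at best $m\gtrsim 2^n$, with no exponential gain. Instead I would build $f_0$ from the block decomposition $Q_n=\bigcup_{\|\bs\|_1\le n}\rho(\bs)$, using the fact that the top layer contains $\asymp n$ blocks, each of cardinality $\asymp 2^n$, sitting at geometrically separated scales and therefore behaving almost like an independent (lacunary/Sidon) system of $\asymp n$ super-characters. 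Concretely, $f_0$ should combine a coherent Dirichlet-type peak inside each block, to force $\|f_0\|_\infty$ to be a fixed fraction of $|Q_n|$ and thereby supply the polynomial prefactor $2^n n$, with random signs or phases across the $\asymp n$ blocks arranged in a Riesz-product fashion, to suppress the secondary ridges and manufacture light, sub-Gaussian tails in the block direction. This is exactly the mechanism behind the lacunary lower bound of Theorem \ref{IT2}, now transplanted to the $\asymp n$ scales of the hyperbolic cross.

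The heart of the proof is then the small-ball estimate for this $f_0$. I would bound $|E|$ by a concentration argument over the $\asymp n$ nearly independent blocks, a Bernstein- or Salem--Zygmund-type inequality of the same flavour as Lemma \ref{CL1} and Theorem \ref{CT1} of this paper, showing that attaining the level $\|f_0\|_\infty/(Bn^\al)$ is a large deviation across $\asymp n$ effective degrees of freedom and hence has measure $e^{-cn/(Bn^\al)^2}=e^{-cB^{-2}n^{1-2\al}}$, while the coherent within-block peak confines the main lobe to measure $\asymp|Q_n|^{-1}$. Multiplying the two contributions gives the required bound on $|E|$, and feeding it into $m\ge(2\pi)^2/|E|$ produces $m\ge C_1 2^n n\,e^{C_2B^{-2}n^{1-2\al}}$ with absolute constants.

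I expect the main obstacle to be precisely the simultaneous control of these two competing effects inside one test function: a coherent peak of height $\asymp|Q_n|$ is exactly what forces heavy tails for $\D_{Q_n}$, whereas light tails normally cost a factor of the square root of the number of active frequencies in the peak height. Reconciling them requires exploiting the product structure of each block together with the scale separation between blocks, so that coherence is retained within a block while independence is used across the $\asymp n$ blocks. Making the near-independence of the dyadic scales quantitative enough to yield the exact exponent $n^{1-2\al}$, governed by $\asymp n$ degrees of freedom rather than by the full dimension $|Q_n|$, and at the same time the sharp prefactor $2^n n$, without the residual correlations between blocks or the slow decay of the Dirichlet factors degrading the sub-Gaussian estimate, is the technical core on which the whole argument rests.
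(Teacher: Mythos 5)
Your covering reduction itself is sound: translating a fixed peak polynomial and applying the hypothesized inequality to each translate does show that the reflected translates $\xi^\nu-E$ of the super-level set $E=\{|f_0|\ge\|f_0\|_\infty/(Bn^\al)\}$ cover $\mathbb T^2$, hence $m\ge (2\pi)^2/|E|$. This is precisely the mechanism of Theorem \ref{DT3} (from \cite{TT}) recorded in Section \ref{D}. The gap is that everything is then charged to the existence of a single $f_0\in\Tr(Q_n)$ with $|E|\le c\,(2^nn)^{-1}e^{-C_2B^{-2}n^{1-2\al}}$, and this is neither proved in your sketch nor available in the literature. Your key step, ``multiplying the two contributions,'' is not legitimate as stated: $E$ is the \emph{union} of the main lobe and of all secondary ridges, so measures add, they do not multiply; the product form $(2^nn)^{-1}\cdot e^{-c\,n^{1-2\al}/B^2}$ would have to come from a genuinely two-dimensional analysis of the set where $\gtrsim n^{1-\al}/B$ of the $\asymp n$ block kernels are simultaneously large. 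That analysis is exactly where the coherence/independence conflict you flag at the end lives: the prefactor $2^nn$ forces all blocks to peak at the same point (so they are strongly correlated there), each block kernel has heavy Dirichlet-type tails of order $1/(|x_1||x_2|)$ along the coordinate directions, and sign randomization across blocks destroys the coherent peak (reducing it from $n2^n$ to $\asymp\sqrt n\,2^n$) without removing the ridge sets on which the one-dimensional restrictions are still subject to Tur\'an--Remez lower bounds. What is actually known for a single function is the Remez counterexample of \cite{TT} quoted before Corollary \ref{DC1}: level sets of measure $(c2^nn^A)^{-1}$, i.e.\ polynomial factors only. So your argument, as it stands, reproduces Corollary \ref{DC1}, not Theorem \ref{DT1}; closing the exponential gap for one explicit function would be a new (and apparently hard) Tur\'an-type result for the hyperbolic cross.

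The proof this paper points to (the theorem is cited from \cite{KT3}--\cite{KaTe03}, not reproved here) is structurally different and global: it is a volume comparison. The discretization hypothesis gives the inclusion, in coefficient space $\bbC^{N}\simeq\R^{2N}$ with $N=|Q_n|\asymp 2^nn$, of the body $\{c:\max_j|f_c(\xi^j)|\le 1\}$ into $Bn^\al\cdot\{c:\|f_c\|_\infty\le 1\}$. A Gluskin-type theorem on bodies cut out by $m$ linear constraints bounds the volume of the left-hand body from below by $\bigl(cN^{-1/2}(\log(3m/N))^{-1/2}\bigr)^{2N}$, while the main result of \cite{KT3}, \cite{KaTe03} bounds the volume of $\{c:\|f_c\|_\infty\le 1\}$ from above by $\bigl(CN^{-1/2}n^{-1/2}\bigr)^{2N}$ --- an extra $n^{-1/2}$ per dimension, obtained via the hyperbolic-cross Riesz products $\prod_{s}\bigl(1+\e_s\cos(2^sx_1)\cos(2^{n-s}x_2)\bigr)$ used as \emph{dual} ($L_1$-normalized) objects. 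Comparing gives $\log(3m/N)\ge cB^{-2}n^{1-2\al}$, i.e.\ exactly the shape $m\ge C_1 2^n n\, e^{C_2B^{-2}n^{1-2\al}}$; note that this multiplicative form, dimension times exponential, is the signature of a volume argument and is what your single-level-set bound would have to reproduce. The dual/volume route exploits the near-independence of the $\asymp n$ dyadic scales through the richness of the family of $2^n$ Riesz products rather than through the level sets of one primal function, thereby sidestepping the obstruction above; it is also the reason the theorem is stated only for $\mathbb T^2$ (such Riesz products and volume estimates are a $d=2$ phenomenon), a restriction your plan does not account for.
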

In particular, Theorem \ref{DT1} with $\alpha=0$ implies that a necessary condition on $m$ for  $D(Q_n,m)\le C_3$ is $m\ge C_1' |Q_n|^{1+c}$ with positive absolute constant $c$.

We now cite a recent result from \cite{DPTT} on $D(Q_n,m)$. Define
$$
\al_d:=\sum_{j=1}^d \frac{1}{j}\qquad\mbox{ and}\qquad
\beta_d:=d-\al_d.
$$  
\begin{Theorem}[\cite{DPTT}]\label{DT2} For each $d\in\N$ and each $n\in\N$ there exists a set $\xi(m)$ of at most $C_d 2^{n{\alpha_d}}n^{\beta_d}$ points in $[0, 2\pi)^d$ such that for all $f\in\mathcal{T}(Q_n)$,
	$$ \|f\| \le C(d) \|f\|_{\xi(m)}.$$
\end{Theorem}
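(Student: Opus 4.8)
The plan is to induct on the dimension $d$, exploiting the product structure of the step hyperbolic cross. Write $Q_n^{(j)}$ for the $j$-dimensional step hyperbolic cross at level $n$, so that $|Q_n^{(j)}|\asymp 2^n n^{j-1}$ and $Q_n^{(1)}$ is essentially $[-2^n,2^n]\cap\Z$. The decisive observation is that the target count telescopes: since $\alpha_d=\sum_{j=1}^d 1/j$ and $\beta_d=\sum_{j=1}^d(1-1/j)=d-\alpha_d$,
$$
2^{n\alpha_d}n^{\beta_d}=\prod_{j=1}^{d}\bigl(2^n n^{j-1}\bigr)^{1/j}\asymp \prod_{j=1}^{d}|Q_n^{(j)}|^{1/j}.
$$
This identity tells me exactly what to aim for at each step: passing from $d-1$ to $d$ variables, the sampling set should be enlarged by a factor of order $|Q_n^{(d)}|^{1/d}$, and \emph{not} by the naive factor $2^n$ that a crude full-resolution refinement in the new variable would cost.

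For the base case $d=1$ the set $Q_n^{(1)}$ is a symmetric interval of integers of length $\asymp 2^n$, and the classical Marcinkiewicz theorem quoted in the Introduction produces a discretizing set of $O(2^n)$ points with an absolute constant, matching $2^{n\alpha_1}n^{\beta_1}=2^n$. For the inductive step I would decompose $Q_n^{(d)}$ according to the dyadic scale $s_d$ of the last coordinate: for $0\le s_d\le n$ the slice with $|k_d|\asymp 2^{s_d}$ has its first $d-1$ coordinates ranging over the lower cross $Q_{n-s_d}^{(d-1)}$. Thus $\Tr(Q_n^{(d)})$ splits into blocks $\Tr(\rho_1(s_d))\otimes\Tr(Q_{n-s_d}^{(d-1)})$, where $\rho_1(s_d)$ is the one-dimensional dyadic band in the last variable. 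I would discretize the last variable of the $s_d$-block by a one-dimensional Marcinkiewicz net at resolution $2^{s_d}$ and the cross-section by the inductively constructed set for $Q_{n-s_d}^{(d-1)}$. The point favouring the sharp exponent is that the expensive high-frequency slices ($s_d$ near $n$) are paired with low-complexity cross-sections (level $n-s_d$ near $0$), so that a balanced allocation of points across scales can be summed against the geometric weights dictated by the telescoping identity.

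The hard part will be recombining the blocks in the uniform norm. For $f=\sum_{s_d} f_{s_d}$ the bound $\|f\|_\infty\le\sum_{s_d}\|f_{s_d}\|_\infty$ is lossy, and the individual norms $\|f_{s_d}\|_\infty$ cannot be recovered from samples of $f$ alone, so a plain union of per-block nets does not discretize the sum; overcoming this without paying a factor of the full $2^n$ is the heart of the matter. To handle it I would use smooth de la Vall\'ee Poussin projectors in the last variable to realize a stable, almost-orthogonal splitting into the $f_{s_d}$, then control each $\|f_{s_d}\|_\infty$ through a slice-wise Nikol'skii-type inequality between $\cC$ and $L_2$ together with an $L_2$ discretization on the shared point set, the required over-sampling on each slice being precisely of order $|Q_n^{(d)}|^{1/d}$. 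As a sanity check I would also run the softer route of feeding a hyperbolic-cross de la Vall\'ee Poussin kernel with $\|\V_{Q_n}\|_1\le C(d)$ into the conditional Theorem \ref{AT1}, bounding $\sigma_M^c(\D_{Q_n})_{(\infty,1)}$ by a constant and invoking the $L_1$ discretization of Theorem \ref{AT2}; this yields an absolute discretization constant, but since Condition D then costs of order $S\log^3 S$ points with $S\le(M+N)N$, it overshoots the sharp exponent $\alpha_d$, which is exactly why the delicate scale-balanced induction is needed to reach $2^{n\alpha_d}n^{\beta_d}$.
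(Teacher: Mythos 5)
A preliminary remark: the paper itself does not prove this statement; Theorem \ref{DT2} is imported from \cite{DPTT} without proof, so your attempt can only be measured against the requirements of the theorem, not against an internal argument.

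Your telescoping identity, the base case, and your diagnosis of where the difficulty sits are all correct, but the mechanism you offer for what you rightly call the heart of the matter provably cannot deliver an $n$-independent constant. Splitting $f=\sum_{s}f_{s}$ into the $n+1$ scale blocks ($s=s_d$) and applying the triangle inequality already costs a factor of order $n$, which is fatal when the target constant is $C(d)$. Worse, each block subspace in your decomposition has dimension of order $2^{s}\cdot 2^{n-s}(n-s)^{d-2}\ge c\,2^{n}$, and for any finite $A\subset\Z^d$ the sharp constant in the Nikol'skii inequality $\|g\|_\infty\le H\|g\|_2$, $g\in\Tr(A)$, equals $|A|^{1/2}$, attained by the Dirichlet kernel $\D_A$. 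Hence the chain you propose --- block Nikol'skii, then $\|f_s\|_2\le C\|f\|_2$, then an $L_2$ discretization of $f$ on the shared point set --- certifies at best a discretization constant of order $\sum_{s}\bigl(2^n(n-s)^{d-2}\bigr)^{1/2}\asymp 2^{n/2}n^{d/2}$, no matter how many sample points are used: oversampling improves the $L_2$-discretization link, but the loss sits in the norm inequality itself, which no point set can repair (test the argument on $f=\D_{B_s}$, the Dirichlet kernel of a single block). So this route reproduces, in fact slightly worsens, the trivial $\sqrt N$ bound of Theorem \ref{IT1}, which already holds with only $\asymp|Q_n|$ points; the entire content of Theorem \ref{DT2} is to beat that barrier, i.e.\ to make the point set see the peak of $f$ directly rather than through an $L_2$ proxy, and no ingredient of the proposal does this. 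The alternative rigorous reading of your de la Vall\'ee Poussin idea --- recover $f_s$ from samples of $f$ by discretizing the convolution, i.e.\ run Condition D and Theorem \ref{AT1} --- also fails: the corresponding subspace $Y_S$ contains functions with spectrum in the difference set $Q_n-Q_n$, and already for $d=2$ that set is the whole box $\{\bk : |k_1|,|k_2|\le 2^{n+1}\}$ (write $(a_1,a_2)=(a_1,0)-(0,-a_2)$, both points lying in $Q_n$), so Condition D would require $\gtrsim 2^{2n}$ points, above the target $2^{3n/2}n^{1/2}$.

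Two further flaws. The premise of your sanity check, a reproducing kernel for $\Tr(Q_n)$ with $\|\V_{Q_n}\|_1\le C(d)$, is not available: there is no hyperbolic-cross analogue of (\ref{A6}) --- for kernels with spectrum in a dilated cross the $L_1$ norms grow with $n$ (the phenomenon behind (\ref{DD2}) and Corollary \ref{DC1}), while for unconstrained spectrum the number of terms needed for a bounded $L_1$ norm is exactly the nontrivial question of Theorem \ref{BT1} and Remark \ref{BR1}. And the bookkeeping of the inductive step is not actually the balanced allocation you describe: since $\alpha_{d-1}\ge1$, the sum $\sum_{s}2^{s}2^{(n-s)\alpha_{d-1}}(n-s)^{\beta_{d-1}}$ is dominated by the small-$s$ terms, and multiplying by your uniform oversampling factor $|Q_n^{(d)}|^{1/d}$ gives $\asymp n^{3/2}2^{3n/2}$ for $d=2$ rather than $n^{1/2}2^{3n/2}$. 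In short, the point count is in the right ballpark, but the key lemma --- a device by which $2^{n\alpha_d}n^{\beta_d}$ well-chosen points detect $\|f\|_\infty$ for every $f\in\Tr(Q_n)$ with loss bounded independently of $n$ --- is missing, and the tools you invoke cannot supply it.
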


An operator $T_n$ with the following properties was constructed in \cite{T93}.
The operator $T_n$ has the form
$$
T_n(f) = \sum_{j=1}^m f(\xi^j) \psi_j(\bx),\quad m\le c(d)2^n n^{d-1},\quad \psi_j \in \Tr(Q_{n+d})
$$
and
\be\label{DD1}
T_n(f) =f,\quad f\in \Tr(Q_n),
\ee
\be\label{DD2}
\|T_n\|_{\ell^m_\infty\to \cC} \asymp n^{d-1}.
\ee
Points $\{\xi^j\}$ form the sparse grid (Smolyak net) (see \cite{VTbookMA}, p.268).
Properties (\ref{DD1}) and (\ref{DD2}) imply that all $f\in\Tr(Q_n)$ satisfy the discretization inequality (see \cite{KaTe03})
\be\label{DD3}
\|f\|_\infty \le C(d)n^{d-1} \max_{1\le j\le m} |f(\xi^j)|.
\ee
We now discuss some results from \cite{TT} about the relation between discretization and the Remez-type inequality.
\begin{Definition}\label{DefD1} We say that $f$ satisfies the Remez-type inequality with parameters $p$, $\bt$, $R$ (in other words, $RI(p,\bt,R)$ holds) if for any measurable $B\subset \Om$ with measure $|B|\le \bt$
\be\label{DD4}
\|f\|_{L_p(\Om)} \le R\|f\|_{L_p(\Om\setminus B)}.
\ee
\end{Definition}
The following result is from \cite{TT}.
 \begin{Theorem}[\cite{TT}]\label{DT3} Let $f$ be a continuous periodic function on $\T^d$. Assume that there exists a set $\xi(m)=\{\xi^j\}_{j=1}^m \subset \T^d$ such that for all functions $f_\by(\bx):= f(\bx-\by)$, $\by\in\T^d$, we have the discretization inequality
\be\label{DD5}
\|f_\by\|_\infty \le D\max_{1\le j\le m}|f_\by(\xi^j)|.
\ee
Then for any $B$ with $|B|<1/m$ we have 
\be\label{DD6}
\|f\|_{\cC(\Om)} \le D\|f\|_{\cC(\Om\setminus B)}.
\ee
\end{Theorem}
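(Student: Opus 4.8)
The plan is to use the hypothesis that the discretization inequality (\ref{DD5}) holds for \emph{every} translate $f_\by$, and then to select one particular translate whose sampling nodes all avoid the exceptional set $B$. First I would rewrite the hypothesis in a more convenient form. Since $f_\by(\bx)=f(\bx-\by)$ is merely a shift of $f$ on $\T^d$, we have $\|f_\by\|_\infty=\|f\|_\infty$ and $f_\by(\xi^j)=f(\xi^j-\by)$, so (\ref{DD5}) becomes
$$
\|f\|_\infty\le D\max_{1\le j\le m}|f(\xi^j-\by)|,\qquad \by\in\T^d.
$$

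Next I would produce a translate $\by$ for which none of the shifted nodes $\xi^j-\by$ lies in $B$. The set of ``bad'' shifts is $\bigcup_{j=1}^m(\xi^j-B)$; each set $\xi^j-B$ is a translate of the reflection of $B$, hence has the same (normalized Lebesgue) measure as $B$, so the union has measure at most $m|B|$. The assumption $|B|<1/m$ gives $m|B|<1=|\T^d|$, so the complement of this union has positive measure and is in particular nonempty. Choosing any $\by$ in it guarantees $\xi^j-\by\in\Om\setminus B$ for all $j=1,\dots,m$.

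Finally I would combine the two steps. Since the finitely many points $\xi^j-\by$ all lie in $\Om\setminus B$,
$$
\|f\|_{\cC(\Om)}=\|f\|_\infty\le D\max_{1\le j\le m}|f(\xi^j-\by)|\le D\sup_{\bx\in\Om\setminus B}|f(\bx)|=D\|f\|_{\cC(\Om\setminus B)},
$$
which is exactly (\ref{DD6}). There is no serious obstacle here: the proof is a translation-averaging (pigeonhole) argument on the torus, where the invariance of Lebesgue measure under shifts of $B$ does all the work. The only points needing a word of care are the measurability of $\bigcup_{j=1}^m(\xi^j-B)$, which is immediate since $B$ is measurable, and the role of the strict inequality $|B|<1/m$, which is precisely what forces $m|B|<1$ and thereby the existence of an admissible shift $\by$.
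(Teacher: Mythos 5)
Your proof is correct, and it is the standard translation--pigeonhole argument: the paper itself states Theorem \ref{DT3} as a cited result from \cite{TT} without reproducing a proof, and your argument (shift-invariance of the normalized Lebesgue measure on $\T^d$, the union bound $m|B|<1$ producing an admissible shift $\by$, then applying (\ref{DD5}) to that translate) is exactly the argument used there. No gaps; the two points you flag --- measurability of $\bigcup_{j=1}^m(\xi^j-B)$ and the role of the strict inequality $|B|<1/m$ --- are handled correctly.
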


It was proved in \cite{TT} that the following statement is false.

{\it There exist  $\de>0$, $A$, $c$, and $C$ such that for any $f\in \Tr(Q_n)$ and any set $B\subset \T^d$ of measure
$|B| \le (c2^nn^A)^{-1}$ the Remez-type inequality holds
$$
\|f\|_\infty \le Cn^{(d-1)(1-\de)} \sup_{{\mathbf u}\in \T^d\setminus B} |f({\mathbf u})|.
$$}

The above remark and Theorem \ref{DT3} imply the following statement.

\begin{Corollary}\label{DC1} Take any $\de>0$. Suppose that the point set $\xi(m)$ is such that 
for all $f\in \Tr(Q_n)$ we have
$$
\|f\| \le C(d) n^{(d-1)(1-\de)} \|f\|_{\xi(m)}.
$$
Then for any $A$ we have  $m\ge C(A,d,\delta) 2^n n^A$ with a positive constant $C(A,d,\delta)$.
\end{Corollary}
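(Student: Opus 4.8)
The plan is to feed the hypothesis into Theorem \ref{DT3} and then contradict the Remez-type statement shown to be false in \cite{TT}. First I would record that the hypothesis is a discretization inequality holding for \emph{every} $f\in\Tr(Q_n)$ with the single constant $D:=C(d)n^{(d-1)(1-\de)}$, and that $\Tr(Q_n)$ is invariant under the translations $\bx\mapsto\bx-\by$. Consequently, fixing any $f\in\Tr(Q_n)$, each translate $f_\by$ again lies in $\Tr(Q_n)$ and satisfies $\|f_\by\|_\infty\le D\max_{j}|f_\by(\xi^j)|$ against the same node set $\xi(m)$. This is exactly the assumption of Theorem \ref{DT3}, so that theorem applies with this $D$ and yields, for each $f\in\Tr(Q_n)$ and each measurable $B\subset\T^d$ with $|B|<1/m$, the Remez-type inequality $\|f\|_\infty\le C(d)n^{(d-1)(1-\de)}\sup_{\bu\in\T^d\setminus B}|f(\bu)|$.

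Next I would read this against the statement proved false in \cite{TT}, which asserts the existence of constants making the Remez-type inequality with factor $Cn^{(d-1)(1-\de)}$ valid for all $f\in\Tr(Q_n)$ and all $B$ with $|B|\le(c2^nn^A)^{-1}$. Arguing by contradiction, I would suppose the desired bound failed, so that $m<c\,2^nn^A$ for the relevant $c$. Then $(c2^nn^A)^{-1}<1/m$, hence every $B$ admissible in the false statement is admissible in the conclusion of the previous paragraph; together with the matching factor $D=C(d)n^{(d-1)(1-\de)}$ this would reinstate precisely the inequality that \cite{TT} excludes. This essential mechanism forces $m\ge c\,2^nn^A$, which is the assertion with $C(A,d,\de):=c$. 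The strict versus non-strict measure constraint ($|B|<1/m$ in Theorem \ref{DT3} against $|B|\le(c2^nn^A)^{-1}$ in \cite{TT}) is harmless and is absorbed by halving $c$.

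The step I expect to be the main obstacle is the correct handling of the quantifiers, since the falseness in \cite{TT} is genuinely asymptotic in $n$: no single choice of constants can serve \emph{all} $n$. Accordingly I would phrase the contradiction along a sequence. If no uniform constant $C(A,d,\de)$ existed, one could extract frequencies $n_j\to\infty$ (the $n_j$ must tend to infinity, since the discretization inequality forces $m\ge\dim\Tr(Q_{n_j})\ge1$, ruling out bounded $n_j$ against $m_j\to0$) together with admissible node sets of sizes $m_j=o(2^{n_j}n_j^A)$. Along this sequence the Remez-type inequality derived in the first paragraph would hold with exceptional-set thresholds eventually exceeding $(c2^{n_j}n_j^A)^{-1}$ for every fixed $c$, contradicting the negation of the false statement, provided the counterexamples furnished by the \cite{TT} construction occur for all large $n$. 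Verifying that this last compatibility is met, that is, that the failures supplied by \cite{TT} are cofinal in $n$, is the one point needing care beyond the formal manipulations above.
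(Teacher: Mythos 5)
Your proposal is correct and takes essentially the same route as the paper: the paper obtains this corollary precisely by combining Theorem \ref{DT3} (applied to every $f\in\Tr(Q_n)$, using that $\Tr(Q_n)$ is translation invariant so all translates $f_\by$ satisfy the same discretization inequality) with the Remez-type statement shown to be false in \cite{TT}, which is exactly your contradiction mechanism. Your extra care with the quantifiers --- running the contradiction along a sequence $n_j\to\infty$ and noting that one needs the \cite{TT} counterexamples to occur for arbitrarily large $n$ --- addresses a point the paper passes over silently, and it is satisfied by the construction in \cite{TT}.
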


We now show how one can prove weaker than the first part of Theorem \ref{IT1} results using other methods.

{\bf 6.1. Use of the Remez and Nikol'skii inequalities.} Let a finite set $\cK:=\{k_j\}_{j=1}^n$, $k_j\in\N$, be a lacunary set, i.e. for some $b>1$ we have $k_{j+1}/k_j \ge b$, $j=1,\dots, n-1$. Then it is well known that for any $q<\infty$ we have the following lower bound in the 
Nikol'skii inequality
\be\label{D1}
\sup_{f\in \Tr(\cK)}\|f\|_\infty/\|f\|_q \ge C(q,b) n^{1/2}.
\ee
It is proved in \cite{TT} that the Remez-type inequality (see Definition \ref{DefD1}) with parameters $R$ and $\bt$ implies the following Nikol'skii-type inequality 
\be\label{D2}
\|f\|_\infty \le R\bt^{-1/q}\|f\|_q.
\ee
Using (\ref{D1}) with $n=N$ we obtain from (\ref{D2}) and Theorem \ref{DT3} that 
\be\label{D3}
D(N,m) \ge C(q)m^{-1/q}N^{1/2},
\ee
which implies that for a given $C_0$ and $\e>0$ we have for $m\le C_0N$
\be\label{D4}
D(N,m) \ge C(C_0,\e)N^{1/2-\e}.
\ee

{\bf 6.2. Probabilistic approach.} As above, consider a lacunary set $\cK:=\{k_j\}_{j=1}^n$ and the following random polynomials
$$
f(x,t) := \sum_{j=1}^n r_j(t)e^{ik_jx}
$$
where $r_j(t)$ are the Rademacher functions. 
Then, on one hand, it is well known that for each $t\in [0,1] $ we have
\be\label{D5}
\|f(\cdot,t)\|_\infty \ge c(b)n
\ee
with a positive constant $c(b)$. 

On the other hand, using the concentration measure inequality, for instance, the Hoeffding inequality (see \cite{VTbook}, p.197), we obtain for any fixed $x$
\be\label{D6}
\bP\left\{|f(x,\cdot)| \ge n\eta\right\} \le 4\exp(-cn\eta^2)
\ee
with an absolute positive constant $c$. Inequality (\ref{D6}) implies that for any set $\xi(m)=\{\xi^j\}_{j=1}^m$ of $m$ points with $4m\exp(-cn\eta^2)<1$ there exists $t_0\in[0,1]$ such that 
$\|f(\cdot,t_0)\|_{\xi(m)} \le n\eta$. Combining this with (\ref{D5}) we obtain 
\be\label{D7}
D(\cK,m) \ge c(b)/\eta \quad \text{provided}\quad 4m\exp(-cn\eta^2)<1.
\ee
It is easy to derive from here that
\be\label{D8}
D(\cK,m) \ge c' n^{1/2}(\ln m)^{-1/2}\quad \text{and}\quad D(N,m) \ge c' N^{1/2}(\ln m)^{-1/2}.
\ee

In the above proof of (\ref{D8}) we only use the following properties of the trigonometric system:
uniform boundedness and the inequality (\ref{D5}) for polynomials with frequencies from $\cK$. 
We now explain how the above argument can be used in the case of uniformly bounded orthonormal systems.   The following statement was proved in \cite{Ka1} (see also \cite{KS}, Ch.9, Theorem 9). 

\begin{Theorem}[\cite{Ka1}]\label{DTk} Let $\Phi :=\{\ff_j(x)\}_{j=1}^K$, $x\in [0,1]$, be a uniformly bounded real orthonormal system $\|\ff_j\|_\infty \le B$, $j=1,\dots,K$. There exists a subset $J_s \subset [1,K]\cap \N$
with cardinality $|J_s|=s:= \max([(\log_2 K)/6],1)$ such that for any $a_k$ we have
\be\label{D8k}
\sum_{k\in J_s} |a_k| \le 4B\left\|\sum_{k\in J_s} a_k \ff_k\right\|_\infty.
\ee
\end{Theorem}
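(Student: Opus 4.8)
The plan is to pass to the dual, measure-theoretic form of the $\ell_1$-bound and to realize the required measures by a Riesz-product construction on a carefully chosen subsystem. First I would reduce the inequality to a moment problem: it suffices to produce, for the chosen set $J_s$ and for every sign vector $\theta=(\theta_k)_{k\in J_s}\in\{-1,+1\}^{J_s}$, a probability measure $\mu_\theta$ on $[0,1]$ whose moments $b_k:=\int\ff_k\,d\mu_\theta$ satisfy $\theta_k b_k\ge 1/(4B)$ for all $k\in J_s$. Indeed, given coefficients $a_k$, choosing $\theta_k=\sign(a_k)$ gives $a_kb_k=|a_k|\theta_kb_k\ge|a_k|/(4B)$, hence $\sum_k|a_k|\le 4B\sum_k a_kb_k=4B\int\bigl(\sum_k a_k\ff_k\bigr)d\mu_\theta\le 4B\bigl\|\sum_k a_k\ff_k\bigr\|_\infty$, which is the assertion.

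Second, I would construct $\mu_\theta$ as a normalized Riesz product $d\mu_\theta=Z_\theta^{-1}\prod_{k\in J_s}\bigl(1+\tfrac{1}{2B}\theta_k\ff_k\bigr)\,d\lambda$, with $\lambda$ Lebesgue measure and $Z_\theta$ the normalizing constant. Since $\tfrac{1}{2B}\|\ff_k\|_\infty\le 1/2$, each factor lies in $[1/2,3/2]$, so the integrand is nonnegative and $\mu_\theta$ is a probability measure. Expanding the product and using orthonormality, the diagonal contribution to $b_k$ is $\tfrac{1}{2B}\theta_k\int\ff_k^2=\theta_k/(2B)$, the order-two integrals $\int\ff_k\ff_j$ ($k\ne j$) vanish, and the remaining terms are errors assembled from integrals $\int\prod_{j\in S}\ff_j$ and $\int\ff_k^2\prod_{j\in S}\ff_j$ (and, in $Z_\theta$, the means $\int\ff_j$), each carrying a weight $(2B)^{-|S|}$. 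If $J_s$ is chosen so that these correlations are small enough to force $Z_\theta\le 3/2$ and to keep each error in $b_k$ below $1/(8B)$, then $\theta_k b_k\ge(1/(2B)-1/(8B))/(3/2)=1/(4B)$, exactly the bound needed in the first step.

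Third — and this is the substance — I would extract $J_s$ with $|J_s|=s=\lfloor(\log_2K)/6\rfloor$ on which all of these correlations are simultaneously small. Orthonormality provides the mean-square control that makes this feasible: Bessel's inequality gives $\sum_k(\int\ff_k)^2\le 1$ for the means, $\sum_j(\int\ff_k^2\ff_j)^2\le\int\ff_k^4\le B^2$, and more generally $\sum_k(\int g\,\ff_k)^2\le\|g\|_2^2$ for the squarefree products, so only a small proportion of indices or subsystems can be ``bad.'' I would then run a counting/probabilistic selection, showing that a random $s$-subset of $[1,K]\cap\N$ avoids every bad configuration with positive probability as soon as $K$ is exponentially large in $s$; the hypothesis $2^{6s}\le K$, i.e.\ $s\le(\log_2K)/6$, is exactly this exponential room, and the constant $6$ should fall out of balancing the roughly $2^s$ subsets $S$ against the per-configuration failure probabilities.

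The hard part will be this third step. One must control the correlations of the selected subsystem at every order $|S|$ at once — not merely triple products, whose scarcity alone would permit subsystems of polynomial size — and carry the bookkeeping through to the sharp size $(\log_2K)/6$ while keeping both $Z_\theta$ and all the moment errors within the tolerances above, uniformly over all $2^s$ sign patterns $\theta$. This is precisely the task of extracting an approximately dissociate (lacunary-type) subsystem from an arbitrary bounded orthonormal system, and the logarithmic ceiling is genuine: for the trigonometric system $\{e^{ikx}\}_{k\le K}$ the lacunary subsystems, which already realize an $O(1)$ $\ell_1$-bound, have size $\asymp\log K$.
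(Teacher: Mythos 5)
You should first be aware that the paper does not prove Theorem \ref{DTk} at all: it is quoted from \cite{Ka1} (see also \cite{KS}, Ch.~9, Theorem 9), and the only information the paper gives about the argument is the remark that the cited proof is probabilistic, i.e.\ works for a randomly chosen subset. So your attempt can only be compared against that cited route, and judged on its own merits. On those merits, your first two steps are correct and classical: the reduction to producing, for each sign pattern $\theta$, a probability measure $\mu_\theta$ with $\theta_k\int\varphi_k\,d\mu_\theta\ge 1/(4B)$ is valid (for real coefficients $a_k$, which is the intended setting), the normalized Riesz product $Z_\theta^{-1}\prod_{k\in J_s}\bigl(1+\tfrac{1}{2B}\theta_k\varphi_k\bigr)\,d\lambda$ is a legitimate probability measure since each factor lies in $[1/2,3/2]$, and your tolerance bookkeeping $(1/(2B)-1/(8B))/(3/2)=1/(4B)$ is right. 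This is Rudin's Sidon-set argument transplanted to a general bounded orthonormal system, together with a random-selection step, so it is at least in the same probabilistic spirit as the proof the paper points to.

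The genuine gap is that your third step --- which, as you say yourself, is the entire substance of the theorem --- is a plan, not a proof, and the plan as phrased does not obviously close at the stated cardinality. Concretely: to keep the Riesz-product error in $b_k$ below $1/(8B)$ you must control $\sum_{r\ge 2}\binom{s}{r}(2B)^{-r}\eta_r$, where $\eta_r$ is your smallness threshold for order-$r$ correlations $\int\prod_{j\in S}\varphi_j$ (and the variants with a factor $\varphi_k$ or $\varphi_k^2$). If, as your sketch suggests, you take geometric thresholds $\eta_r\approx\eta^r$ and run a union bound over bad configurations using the Bessel counts, the number of bad order-$r$ completions per $(r-1)$-set is $\le B^{2(r-2)}/\eta_r^2$, and summing the failure probabilities over $r\le s$ forces $\log K\gtrsim s\log s$, i.e.\ only $s\lesssim \log K/\log\log K$ --- short of $s=[(\log_2K)/6]$. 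To actually reach $s\asymp\log_2 K$ one must exploit that the number of order-$r$ configurations inside $J_s$ is $\binom{s}{r}$, not $2^s$ uniformly: either choose order-dependent tolerances $\eta_r\asymp B^{r-1}/\binom{s}{r}$ (at top order $r=s$ there is a single subset, so the tolerance there may be huge), or replace the per-configuration union bound by a first-moment bound on the aggregated error, e.g.\ $\mathbb{E}\sum_{k\in J}\sum_{|S|\ge2}(2B)^{-|S|}\bigl|\int\varphi_k\prod_{S}\varphi_j\bigr|\lesssim s^2e^{s/2}/(B\sqrt{K})$ for a uniformly random $s$-set $J$, which is $\le 1/(16B)$ precisely when $s\lesssim\log K$. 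Either repair seems to work and leaves room for the constants $4B$ and $1/6$, but neither the thresholds, nor the counting, nor the constant-chasing (including the treatment of the means $\int\varphi_j$, which must be small for the selected indices, and of the terms $\int\varphi_k^2\prod_S\varphi_j$) appears in your write-up. As it stands, you have a correct reduction plus an unexecuted --- and, in its literal union-bound form, insufficient --- selection argument, so the proof is incomplete exactly at its critical point.
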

Theorem \ref{DTk} provides an analog of inequality (\ref{D5}). Thus, Theorem \ref{DTk} and the above
probabilistic argument give the following result. Denote $\Phi(J) := \sp(\ff_j, j\in J)$. 
\begin{Theorem}\label{DTd} Let $\Phi :=\{\ff_j(x)\}_{j=1}^K$, $x\in [0,1]$, be a uniformly bounded real orthonormal system $\|\ff_j\|_\infty \le B$, $j=1,\dots,K$. There is a positive constant $c(B)$ with the following property.  For any natural number $n\le s:= \max([(\log_2 K)/6],1)$
there exists a subset $J_n \subset [1,K]\cap \N$
with cardinality $|J_n|=n$ such that for any $\xi(m)=\{\xi^j\}_{j=1}^m$ we have
\be\label{D8d}
 \sup_{f\in \Phi(J_n)}(\|f\|_\infty/\|f\|_{\xi(m)}) \ge c(B)  (n/\ln m)^{1/2}.
\ee
\end{Theorem}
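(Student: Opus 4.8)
The plan is to follow the probabilistic template of Subsection 6.2, substituting Theorem \ref{DTk} for the lacunary lower bound (\ref{D5}) and the general Hoeffding inequality for (\ref{D6}). First I would apply Theorem \ref{DTk} to the system $\Phi$ to obtain a set $J_s\subset [1,K]\cap\N$ with $|J_s|=s$ for which (\ref{D8k}) holds. Since $n\le s$, I fix any $J_n\subset J_s$ with $|J_n|=n$; restricting (\ref{D8k}) to coefficient sequences supported on $J_n$ gives, for every choice of signs $\e_k\in\{-1,1\}$,
$$
\Bigl\|\sum_{k\in J_n}\e_k\ff_k\Bigr\|_\infty \ge \frac{1}{4B}\sum_{k\in J_n}|\e_k| = \frac{n}{4B},
$$
which is exactly the required analog of (\ref{D5}) for the subspace $\Phi(J_n)$.

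Next I would introduce the random polynomials $f(x,t):=\sum_{k\in J_n} r_k(t)\ff_k(x)$, where the $r_k$ are Rademacher functions. By the display above, $\|f(\cdot,t)\|_\infty\ge n/(4B)$ for \emph{every} $t$. For the upper bound at the sample points, I fix $x$ and observe that $f(x,\cdot)$ is a sum of independent mean-zero terms $r_k(t)\ff_k(x)$ with $\sum_{k\in J_n}\ff_k(x)^2\le nB^2$. Hoeffding's inequality (see \cite{VTbook}, p.197) then yields
$$
\bP\bigl\{|f(x,\cdot)|\ge y\bigr\}\le 2\exp\Bigl(-\frac{y^2}{2nB^2}\Bigr).
$$
Given an arbitrary point set $\xi(m)=\{\xi^j\}_{j=1}^m$, a union bound over $j=1,\dots,m$ shows that whenever $2m\exp(-y^2/(2nB^2))<1$ there exists $t_0\in[0,1]$ with $\|f(\cdot,t_0)\|_{\xi(m)}=\max_j|f(\xi^j,t_0)|<y$.

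Finally I would take the threshold $y$ just above $B(2n\ln(2m))^{1/2}$, so that the union-bound condition holds; for the resulting $t_0$ the polynomial $g:=f(\cdot,t_0)\in\Phi(J_n)$ satisfies $\|g\|_\infty\ge n/(4B)$ and $\|g\|_{\xi(m)}\le y\le CB(n\ln m)^{1/2}$, whence
$$
\sup_{f\in\Phi(J_n)}\frac{\|f\|_\infty}{\|f\|_{\xi(m)}}\ge \frac{\|g\|_\infty}{\|g\|_{\xi(m)}}\ge \frac{n/(4B)}{CB(n\ln m)^{1/2}} = c(B)\Bigl(\frac{n}{\ln m}\Bigr)^{1/2}
$$
with $c(B)\asymp B^{-2}$. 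The step that requires care is the interplay between the two estimates rather than any single calculation: the lower bound $\|f(\cdot,t)\|_\infty\ge n/(4B)$ is deterministic and holds for every sign pattern, while smallness at the $m$ sample points need only hold for one favorable $t_0$. It is precisely this asymmetry, together with the logarithmic cost $\ln m$ incurred by the union bound, that produces the $(n/\ln m)^{1/2}$ factor; passing from $J_s$ to a subset $J_n$ is immediate because $n\le s$.
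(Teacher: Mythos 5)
Your proposal is correct and follows exactly the route the paper intends: Theorem \ref{DTk} (restricted to a subset $J_n\subset J_s$ by setting the remaining coefficients to zero) serves as the analog of the lower bound (\ref{D5}), and the Rademacher--Hoeffding--union-bound argument of Subsection 6.2 is then run verbatim, yielding $c(B)\asymp B^{-2}$. The paper leaves these details implicit (``Theorem \ref{DTk} and the above probabilistic argument give the following result''), and your write-up fills them in faithfully, including the key asymmetry between the deterministic lower bound and the single favorable $t_0$.
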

In particular, Theorem \ref{DTd} implies that it is necessary to have $m\ge e^{c_1(B,D)n}$, $c_1(B,D)>0$, for the discretization inequality
$$
\|f\|_\infty \le D\|f\|_{\xi(m)},\quad f\in \Phi(J_n)
$$
to hold. 

We point out that the proof of Theorem \ref{DTk} from \cite{Ka1} (see also \cite{KS}, Ch.9, Theorem 9) is probabilistic. This means that Theorem \ref{DTd} holds for a randomly chosen subset $J_n$. 

{\bf 6.3. Comment on Theorem \ref{IT4}.} We now prove a somewhat more general result than Theorem \ref{IT4}. We prove a similar result for complex subspaces and for a general probability measure. 
However our new restrictions on the number of sampling points are weaker than in Theorem \ref{IT4}.
\begin{Theorem}\label{DT4}
 Let  $\Omega$ be a compact subset of $\R^d$. Assume that an \newline $N$-dimensional subspace $Y_N\subset \cC(\Omega)$ (real or complex) satisfies the Nikol'skii-type inequality: For any $f\in Y_N$
 \be\label{D9}
 \|f\|_\infty \le H(N)\|f\|_2,\quad \|f\|_2 := \left(\int_\Omega |f(\bx)|^2d\mu\right)^{1/2},
 \ee
 where $\mu$ is the probability measure on $\Omega$.
 Then there are two positive absolute constants $C_1$ and $C_2$ such that there exists a set $\xi(m)=\{\xi^j\}_{j=1}^m\subset \Omega$ with the property:  $m \le C_1 N$ and
 for any $f\in Y_N$ we have  
$$
 \|f\|_\infty \le C_2H(N)\max_{1\le j\le m} |f(\xi^j)|. 
$$
 \end{Theorem}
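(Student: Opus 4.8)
The plan is to combine the Nikol'skii-type inequality (\ref{D9}) with an $L_2$ sampling discretization result for $Y_N$, exactly as the structure of Theorem \ref{IT4} suggests but using the stronger, order-optimal $L_2$ discretization available for arbitrary subspaces. The key observation is that if I can find a set $\xi(m)=\{\xi^j\}_{j=1}^m$ with $m\le C_1N$ and weights such that the weighted discrete $\ell_2$ norm is equivalent to $\|\cdot\|_2$ on $Y_N$, then $\|f\|_2$ is controlled by $\max_j|f(\xi^j)|$ (up to the total weight), and feeding this into (\ref{D9}) finishes the job. Concretely, I would first invoke the optimal-in-order $L_2$ sampling discretization theorem for an arbitrary $N$-dimensional subspace (the results based on \cite{MSS}, \cite{BSS}, as referenced in Section \ref{E} via \cite{VT158}, \cite{LT}): there exist $m\le C_1N$ points $\{\xi^j\}$ and positive weights $\{w_j\}$ with $\sum_j w_j\le 2$ such that for all $f\in Y_N$,
\be
\tfrac12\|f\|_2^2 \le \sum_{j=1}^m w_j |f(\xi^j)|^2 \le \tfrac32\|f\|_2^2.
\ee

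Next I would chain the inequalities. Starting from the Nikol'skii bound (\ref{D9}) and using the left-hand discretization inequality, for any $f\in Y_N$,
\be
\|f\|_\infty \le H(N)\|f\|_2 \le H(N)\Big(2\sum_{j=1}^m w_j|f(\xi^j)|^2\Big)^{1/2}.
\ee
Since $|f(\xi^j)|\le \max_\nu|f(\xi^\nu)|$ for every $j$, I can pull out the maximum and bound the remaining weighted sum by the total weight:
\be
\|f\|_\infty \le H(N)\Big(2\sum_{j=1}^m w_j\Big)^{1/2}\max_\nu|f(\xi^\nu)| \le 2H(N)\max_\nu|f(\xi^\nu)|,
\ee
using $\sum_j w_j\le 2$. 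This yields the claimed bound with $C_2=2$ and $m\le C_1N$, which matches the statement; the improvement over Theorem \ref{IT4} (constant $m\le C_1N$ rather than $m\le aN$ with $C(a)\to\infty$) comes precisely from using the order-optimal weighted $L_2$ discretization instead of the weaker device in \cite{DPTT}.

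The main obstacle is justifying the existence of the weighted $L_2$ Marcinkiewicz-type inequality with $m\le C_1N$ sampling points, uniform total weight, and the two-sided equivalence, in the generality required here, namely for a complex subspace and an arbitrary probability measure $\mu$. For real subspaces and these regimes this is the deep result obtained via the Marcus--Spielman--Srivastava solution to Kadison--Singer (see \cite{VT158}, \cite{LT}), and the complex case is handled by the standard device of passing to the real and imaginary parts, which at most doubles the dimension and hence only affects the absolute constant $C_1$. I would also note that only the left (lower) discretization inequality is strictly needed for the argument, so even a one-sided frame-type bound $\|f\|_2^2\le 2\sum_j w_j|f(\xi^j)|^2$ with $\sum_j w_j$ bounded suffices; this is exactly what the cited results deliver. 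No separate treatment of the function $1$ or of Condition D is required, since this route bypasses the Dirichlet-kernel machinery of Section \ref{A} entirely and relies only on the $L_2$ discretization input together with the hypothesis (\ref{D9}).
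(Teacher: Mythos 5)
Your overall route is the same as the paper's: chain the Nikol'skii inequality (\ref{D9}) with a weighted $L_2$ sampling discretization on $m\le C_1N$ points, pull out $\max_j|f(\xi^j)|$, and absorb the total weight into the constant. However, there is a genuine gap at the one point where the argument actually needs care: you assert that the cited $L_2$ discretization results come with weights satisfying $\sum_j w_j\le 2$, and that a bounded total weight is exactly what those results deliver. It is not. The discretization theorem for arbitrary subspaces (Theorem \ref{DT5}, from \cite{DPSTT2} and \cite{LT}) provides $m\leq C_1'N$ points and nonnegative weights with $c_0'\|f\|_2^2\leq \sum_j \lambda_j|f(\xi^j)|^2\leq C_0'\|f\|_2^2$, but gives no control whatsoever on $\sum_j\lambda_j$. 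Nor is such control automatic: if, for instance, every $f\in Y_N$ vanishes at one of the nodes $\xi^{j_0}$, then $\lambda_{j_0}$ can be made arbitrarily large without disturbing either inequality. Since your final step is precisely $\sum_j w_j|f(\xi^j)|^2\le \bigl(\sum_j w_j\bigr)\max_j|f(\xi^j)|^2$, the proof collapses without a bound on the total weight.

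The paper closes exactly this gap with Remark \ref{DR1}: one applies Theorem \ref{DT5} not to $Y_N$ itself but to the augmented subspace $Y_N':=\{g+c\,:\,g\in Y_N,\ c\in\bbC\}$ of dimension at most $N+1$, and then tests the \emph{upper} discretization inequality with the constant function $f\equiv 1$; since $\mu$ is a probability measure, $\|1\|_2=1$, which yields $\sum_j\lambda_j\le C_0'$. This also shows that your closing remark --- that only the lower, frame-type inequality is needed --- is misleading: the upper inequality is precisely what produces the weight bound, so both sides of the two-sided discretization are used. Once this augmentation step is inserted, your argument becomes the paper's proof (with absolute constants $c_0',C_0'$ in place of your $1/2,3/2$). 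A final minor point: no splitting into real and imaginary parts is needed for the complex case, since the result of \cite{LT} quoted as Theorem \ref{DT5} is already stated for complex subspaces.
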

 \begin{proof} We use a result on discretization in $L_2$ from \cite{LT} (see Theorem 3.3 there), which is a generalization to the complex case of an earlier result from \cite{DPSTT2} established for the real case. 

\begin{Theorem}[\cite{DPSTT2},\cite{LT}]\label{DT5} If $X_N$ is an $N$-dimensional subspace of the complex $L_2(\Omega,\mu)$, then there exist three absolute positive constants $C_1'$, $c_0'$, $C_0'$,  a set of $m\leq   C_1'N$ points $\xi^1,\ldots, \xi^m\in\Omega$, and a set of nonnegative  weights $\lambda_j$, $j=1,\ldots, m$,  such that
\[ c_0'\|f\|_2^2\leq  \sum_{j=1}^m \lambda_j |f(\xi^j)|^2 \leq  C_0' \|f\|_2^2,\  \ \forall f\in X_N.\]
\end{Theorem}

For our application we need to satisfy a certain assumption on the weights. Here is a simple remark from \cite{VT183}.
\begin{Remark}[\cite{VT183}]\label{DR1} Considering a new subspace $X_N' := \{f\,:\, f= g+c, \, g\in X_N,\, c\in \bbC\}$ and applying Theorem \ref{DT5} to 
the $X_N'$ with $f=1$ ($g=0$, $c=1$) we conclude that a version of Theorem \ref{DT5} holds with $m\le C_1'N$ replaced by $m\le C_1'(N+1)$ and with weights satisfying 
$$
\sum_{j=1}^m \lambda_j \le C_0'.
$$
\end{Remark}

Let now $\xi(m)=\{\xi^j\}_{j=1}^m$ be the set of points from Theorem \ref{DT5} and Remark \ref{DR1} with $X_N = Y_N$. Then $m\le C_1N$ and by the Nikol'skii-type inequality assumption we obtain
$$
 \|f\|_\infty \le H(N)\|f\|_2 \le (c_0')^{-1/2}  \left(\sum_{j=1}^m \lambda_j |f(\xi^j)|^2\right)^{1/2} 
 $$
 $$
 \le (c_0')^{-1/2} \left(\sum_{j=1}^m \lambda_j\right)^{1/2} \max_j |f(\xi^j)| \le C_2 \max_j |f(\xi^j)|.
 $$

 \end{proof}

{\bf Acknowledgements.} The authors are grateful to the referees for their useful comments and suggestions.

The work was supported by the Russian Federation Government Grant N{\textsuperscript{\underline{o}}}14.W03.31.0031.

\end{document}